\newtheorem{theorem}{Theorem}[section]
\newtheorem{lemma}[theorem]{Lemma}
\newtheorem{prop}[theorem]{Proposition}
\newtheorem{example}[theorem]{Example}
\newtheorem*{theorem*}{Theorem}
\newtheorem{definition}[theorem]{Definition}
\newtheorem{remark}[theorem]{Remark}
\newtheorem{construction}[theorem]{Construction}
\DeclareMathOperator{\conv}{conv}
\DeclareMathOperator{\Pot}{Pow}
\DeclareMathOperator{\R}{\mathbb{R}}
\edef\texforht{TT\noexpand\fi
  \@ifpackageloaded{tex4ht}
    {\noexpand\iftrue}
    {\noexpand\iffalse}}
\newif\iftikz@node@phantom
\tikzset{
  phantom/.is if=tikz@node@phantom,
  text/.code=%
    \edef\tikz@temp{#1}%
    \ifx\tikz@temp\tikz@nonetext
      \tikz@node@phantomtrue
    \else
      \tikz@node@phantomfalse
      \let\tikz@textcolor\tikz@temp
    \fi
}
\patchcmd\tikz@fig@continue{\tikz@node@transformations}{%
  \iftikz@node@phantom
    \setbox\pgfnodeparttextbox\hbox{}
  \fi\tikz@node@transformations}{}{}
\newcommand{\tikzAngleOfLine}{\tikz@AngleOfLine}
\def\tikz@AngleOfLine(#1)(#2)#3{%
  \pgfmathanglebetweenpoints{%
    \pgfpointanchor{#1}{center}}{%
    \pgfpointanchor{#2}{center}}
  \pgfmathsetmacro{#3}{\pgfmathresult}%
}
\tikzset{ 
    vertexNodePlain/.style = {fill=none, shape=circle, inner sep=0pt, minimum size=2pt, text=none},
    vertexNodePlain/.default=white,
    vertexPlain/labels/.style = {
        vertexNode/.style={vertexNodePlain=##1},
        vertexLabel/.style={gray}
    },
    vertexPlain/nolabels/.style = {
        vertexNode/.style={vertexNodePlain=##1},
        vertexLabel/.style={text=none}
    },
    vertexPlain/.style = vertexPlain/#1,
    vertexPlain/.default=labels
}
\tikzset{
    vertexNodeNormal/.style = {fill=none, shape=circle, inner sep=0pt, minimum size=4pt, text=none},
    vertexNodeNormal/.default = blue,
    vertexNormal/labels/.style = {
        vertexNode/.style={vertexNodeNormal=##1},
        vertexLabel/.style={blue}
    },
    vertexNormal/nolabels/.style = {
        vertexNode/.style={vertexNodeNormal=##1},
        vertexLabel/.style={text=none}
    },
    vertexNormal/.style = vertexNormal/#1,
    vertexNormal/.default=labels
}
\tikzset{
    vertexNodeBallShading/pdf/.style = {ball color=#1},
    vertexNodeBallShading/svg/.style = {fill=#1},
    vertexNodeBallShading/.code = {
        \if\texforht
            \tikzset{vertexNodeBallShading/svg=white}
        \else
            \tikzset{vertexNodeBallShading/pdf=white}
        \fi
    },
    vertexNodeBall/.style = {shape=circle, vertexNodeBallShading=#1, inner sep=2pt, outer sep=0pt, minimum size=3pt, font=\tiny},
    vertexNodeBall/.default = white,
    vertexBall/labels/.style = {
        vertexNode/.style={vertexNodeBall=##1, text=black},
        vertexLabel/.style={text=none}
    },
    vertexBall/nolabels/.style = {
        vertexNode/.style={vertexNodeBall=##1, text=none},
        vertexLabel/.style={text=none}
    },
    vertexBall/.style = vertexBall/#1,
    vertexBall/.default=labels
}
\tikzset{ 
    vertexStyle/.style={vertexNormal=#1},
    vertexStyle/.default = labels
}
\newcommand{\vertexLabelR}[4][]{
    \ifthenelse{ \equal{#1}{} }
        { \node[vertexNode] at (#2) {#4}; }
        { \node[vertexNode=#1] at (#2) {#4}; }
    \node[vertexLabel, #3] at (#2) {#4};
}
\newcommand{\vertexLabelA}[4][]{
    \ifthenelse{ \equal{#1}{} }
        { \node[vertexNode] at (#2) {#4}; }
        { \node[vertexNode=#1] at (#2) {#4}; }
    \node[vertexLabel] at (#3) {#4};
}
\newcommand{\edgeLabelColor}{blue!20!white}
\tikzset{
    edgeLineNone/.style = {draw=none},
    edgeLineNone/.default=black,
    edgeNone/labels/.style = {
        edge/.style = {edgeLineNone=##1},
        edgeLabel/.style = {fill=\edgeLabelColor,font=\small}
    },
    edgeNone/nolabels/.style = {
        edge/.style = {edgeLineNone=##1},
        edgeLabel/.style = {text=none}
    },
    edgeNone/.style = edgeNone/#1,
    edgeNone/.default = labels
}
\tikzset{
    edgeLinePlain/.style={line join=round, draw=#1},
    edgeLinePlain/.default=black,
    edgePlain/labels/.style = {
        edge/.style={edgeLinePlain=##1},
        edgeLabel/.style={fill=\edgeLabelColor,font=\small}
    },
    edgePlain/nolabels/.style = {
        edge/.style={edgeLinePlain=##1},
        edgeLabel/.style={text=none}
    },
    edgePlain/.style = edgePlain/#1,
    edgePlain/.default = labels
}
\tikzset{
    edgeLineDouble/.style = {very thin, double=#1, double distance=.8pt, line join=round},
    edgeLineDouble/.default=gray!90!white,
    edgeDouble/labels/.style = {
        edge/.style = {edgeLineDouble=##1},
        edgeLabel/.style = {fill=\edgeLabelColor,font=\small}
    },
    edgeDouble/nolabels/.style = {
        edge/.style = {edgeLineDouble=##1},
        edgeLabel/.style = {text=none}
    },
    edgeDouble/.style = edgeDouble/#1,
    edgeDouble/.default = labels
}
\tikzset{
    edgeStyle/.style = {edgePlain=#1},
    edgeStyle/.default = labels
}
\newcommand{\faceColorY}{yellow!60!white}   
\newcommand{\faceColorB}{blue!60!white}     
\newcommand{\faceColorC}{cyan!60}           
\newcommand{\faceColorR}{red!60!white}      
\newcommand{\faceColorG}{green!60!white}    
\newcommand{\faceColorO}{orange!50!yellow!70!white} 
\newcommand{\faceColor}{\faceColorY}
\newcommand{\faceColorSwap}{\faceColorC}
\tikzset{
    face/.style = {fill=#1},
    face/.default = \faceColor,
    faceY/.style = {face=\faceColorY},
    faceB/.style = {face=\faceColorB},
    faceC/.style = {face=\faceColorC},
    faceR/.style = {face=\faceColorR},
    faceG/.style = {face=\faceColorG},
    faceO/.style = {face=\faceColorO}
}
\tikzset{
    faceStyle/labels/.style = {
        faceLabel/.style = {}
    },
    faceStyle/nolabels/.style = {
        faceLabel/.style = {text=none}
    },
    faceStyle/.style = faceStyle/#1,
    faceStyle/.default = labels
}
\tikzset{ face/.style={fill=#1} }
\tikzset{ faceSwap/.code=
    \ifdefined\swapColors
        \tikzset{face=\faceColorSwap}
    \else
        \tikzset{face=\faceColor}
    \fi
}
\title{Construction of Toroidal Polyhedra corresponding to perfect Chains of wild Tetrahedra}
\author{
    Reymond Akpanya\thanks{RWTH Aachen University, Chair of Algebra and Representation Theory, Pontdriesch 10-12, 52062 Aachen, Germany. Email: \texttt{reymond.akpanya@rwth-aachen.de}} \and 
    Vanishree Krishna Kirekod\thanks{RWTH Aachen University, Chair of Algebra and Number Theory, Pontdriesch 14-16, 52062 Aachen, Germany. Email: \texttt{krishna.kirekod.vanishree@rwth-aachen.de}} \and 
    Alice C. Niemeyer\thanks{RWTH Aachen University, Chair of Algebra and Representation Theory, Pontdriesch 10-12, 52062 Aachen, Germany. Email: \texttt{alice.niemeyer@momo.math.rwth-aachen.de}} \and 
    Daniel Robertz\thanks{RWTH Aachen University, Chair of Algebra and Number Theory, Pontdriesch 14-16, 52062 Aachen, Germany. Email: \texttt{daniel.robertz@rwth-aachen.de}}
}
\date{}
\begin{document}
\maketitle
\begin{abstract}
 In 1957, Steinhaus proved that a chain of regular tetrahedra, meeting face-to-face and forming a closed loop does not exist \cite{steinhaus1957}. Over the years, various modifications of this statement have been considered and analysed. 
Weakening the statement by only requiring the tetrahedra of a chain to be wild, i.e.\ having all faces congruent, results in various examples of such chains. In this paper, we elaborate on the construction of these chains of wild tetrahedra. We therefore introduce the notions of chains and clusters of wild tetrahedra and relate these structures to simplicial surfaces. We establish that clusters and chains of wild tetrahedra can be described by polyhedra in Euclidean $3$-space. As a result, we present methods to construct toroidal polyhedra arising from chains and provide a census of such toroidal polyhedra consisting of up to $20$ wild tetrahedra.
 Here, we classify toroidal polyhedra with respect to self-intersections and reflection symmetries. We further prove the existence of an infinite family of toroidal polyhedra emerging from chains of wild tetrahedra and present clusters of wild tetrahedra that yield polyhedra of higher genera. 
\end{abstract}

\section{Introduction}
\label{section:introduction}
Since ancient times mathematicians have been fascinated by the existence of polyhedra. 
The Platonic solids are well-known examples of such polyhedra whose properties have been the focus of many studies. For instance, geometric and combinatorial properties of regular polyhedra, i.e.\ polyhedra whose faces are regular polygons, are well established, see \cite{onetriangle,symmetry,ghentpaper,coxeterhelix,eppstein,deltahedra}.
Nevertheless, the properties of regular and irregular polyhedra continue to inspire a wide range of investigations.
 In this paper, we study a weakened version of the \textbf{Steinhaus conjecture} using the theory of simplicial surfaces. We recall a question summarising this conjecture from \cite{steinhaus1957}.
\begin{itemize}
\label{question:SteinhausConjecture}
  \item[Q1:] \textit{Is there a finite sequence of congruent regular tetrahedra $T_1,\ldots, T_n $ such that for $i=1,\ldots,n$ the tetrahedra $T_i$ and $T_{i+1}$ meet face-to-face? (We read the subscripts modulo $n$.) }
\end{itemize}
Here, two congruent tetrahedra $T,T'\subseteq \mathbb{R}^3$ are said to \emph{meet face-to-face} if the intersection $T\cap T'$ is a triangle that forms a face of both $T$ and $T'$.
In the literature, a sequence that arises from iteratively glueing regular tetrahedra together is often referred to as a \emph{chain of regular tetrahedra}. Two examples of such chains consisting of three and six regular tetrahedra are illustrated in Figure~\ref{fig:tetrahedrasequence}. We call such a chain of regular tetrahedra \emph{perfect}, if a face of the last tetrahedron coincides with a face of the first one.
\begin{figure}[H]
  \centering
\begin{subfigure}{0.3\textwidth}
  \centering
\includegraphics[height=4cm]{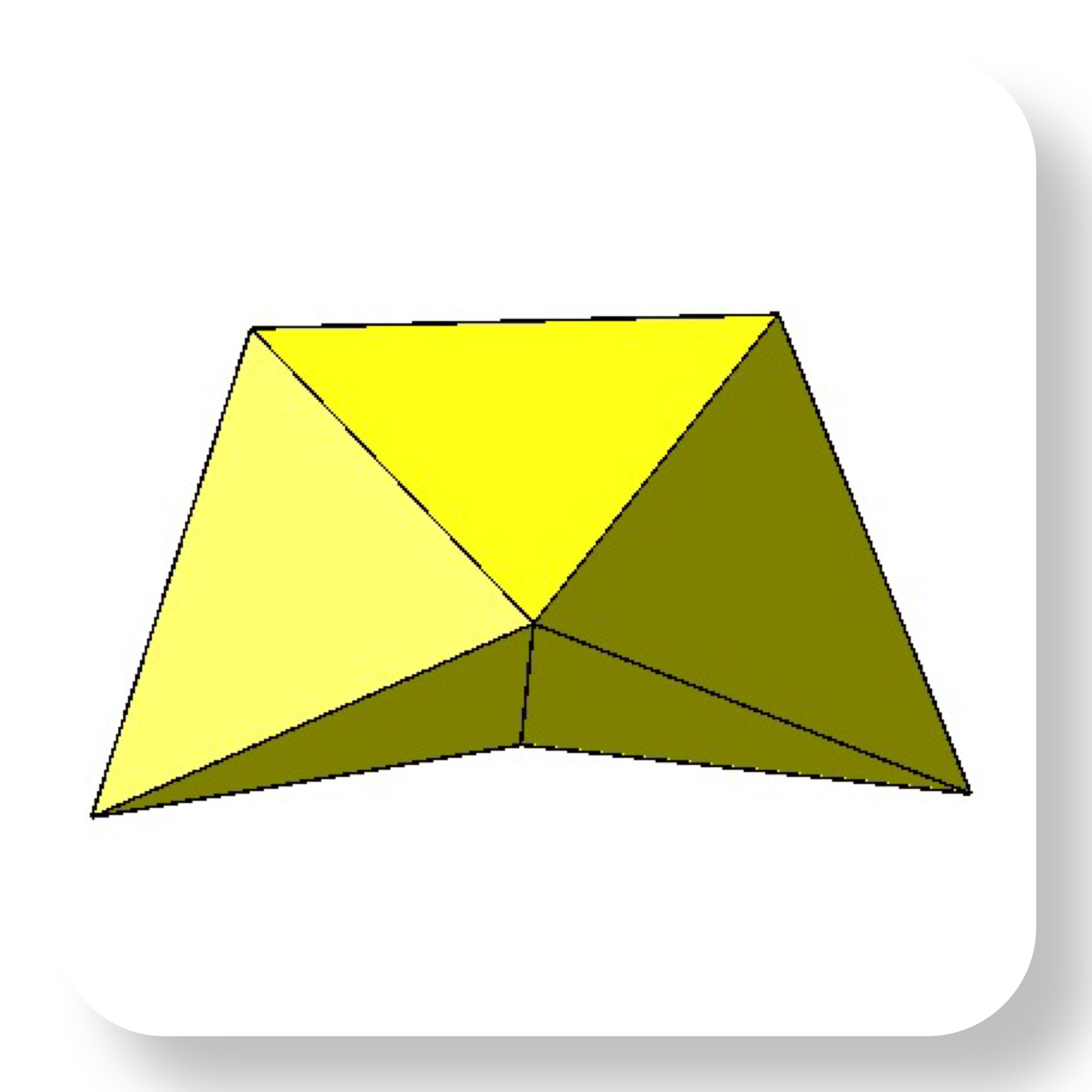}
\caption{}
\label{3cacti}
\end{subfigure} 
\begin{subfigure}{0.3\textwidth}
\centering
\includegraphics[height=4.cm]{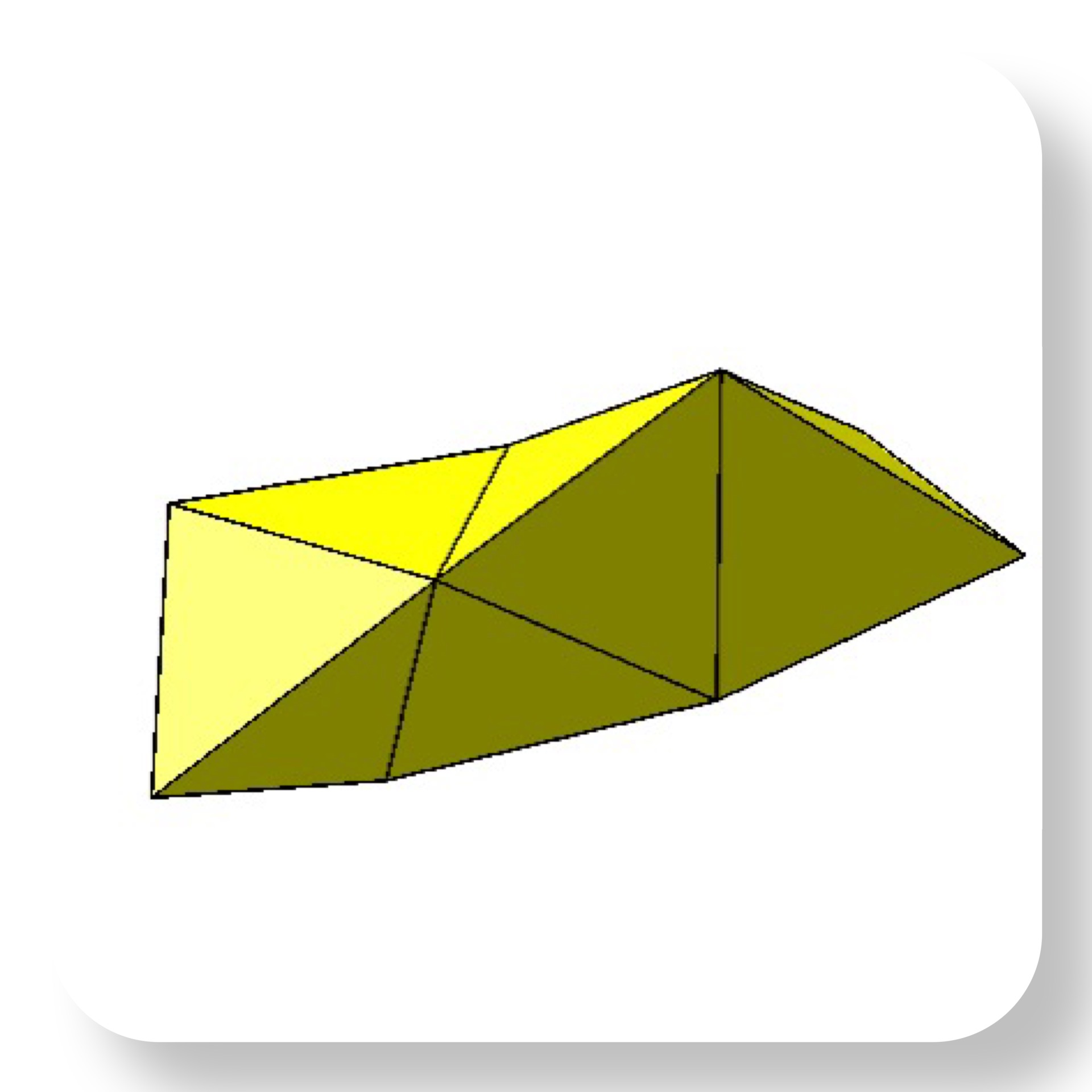}
\caption{}

\end{subfigure}
  \caption{Chains of regular tetrahedra consisting of three (a) and six (b) regular tetrahedra.}
  \label{fig:tetrahedrasequence}
\end{figure}

Over the years, several independent publications have established the non-existence of such perfect chains of regular tetrahedra. For instance, some of these proofs can be found in \cite{dekkerproof,masonproof,stewart,swierczkowski1959chains}. Since then, a lot of research has been motivated by the absence of a perfect chain of regular tetrahedra. These studies can be divided into the following two lines of research:
\begin{itemize}
  \item Constructing chains of regular tetrahedra and then introducing edge-length perturbations to some of the edges of the tetrahedra involved to form closed loops, see \cite{quadrahelix, asymptotic_helix};
  \item Computing chains of regular tetrahedra so that the gap between a face of the first and a face of the last tetrahedron of the chain  is constrained to be within a predefined error accuracy, see \cite{platonicgap}.
\end{itemize}
In this paper, we elaborate on the construction of chains of tetrahedra, where the faces of the tetrahedra involved are all congruent, but not equilateral triangles. We refer to a tetrahedron whose faces are congruent triangles as a \emph{wild tetrahedron}. Hence, we investigate the following question:
\begin{itemize}
  \item[Q2:] \textit{Does there exist a finite sequence of congruent wild tetrahedra $T_1,\ldots, T_n,$ such that for $i=1,\ldots,n$ the tetrahedra $T_i$ and $T_{i+1}$ meet face-to-face? (We read the subscripts modulo $n$.)}\label{question:weakenedsteinhaus}
\end{itemize}
Here, we shall refer to such a sequence of wild tetrahedra as a \emph{perfect chain of wild tetrahedra}. 
In addition, we refer to a sequence of wild tetrahedra $T_1,\ldots,T_n$ as a \emph{cluster of wild tetrahedra} (see Definition~\ref{defintion:tetrahedron}), if for every $2\leq i\leq n$ there exists an $1\leq j<i$ such that $T_i$ and $T_j$ meet face-to-face. 
Our investigations show that modifying the Steinhaus conjecture~(summarised in Q\ref{question:SteinhausConjecture}) in the above sense leads to various examples of perfect chains of wild tetrahedra.
In particular, we present the following theorem:

\begin{theorem*}
There exists a perfect chain of wild tetrahedra without self-intersections consisting of $14$ tetrahedra.
\end{theorem*}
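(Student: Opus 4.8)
The plan is to exhibit such a chain explicitly, keeping the shape of the tetrahedron as an unknown until the end. Up to similarity a wild tetrahedron is determined by its common face, which must be an acute, non-equilateral triangle; after fixing a scale this leaves a two-parameter family, and I keep the two shape parameters $p,q$ free while setting up the combinatorics. A chain of $14$ tetrahedra is conveniently encoded by \emph{rolling}: place $T_1$ in $\mathbb{R}^3$, and obtain $T_{i+1}$ from $T_i$ by reflecting $T_i$ in the plane of one of its faces, other than the one it shares with $T_{i-1}$. Thus a chain corresponds essentially to a word $w$ of length $13$ over a three-letter alphabet together with the initial placement of $T_1$; consecutive tetrahedra then meet face-to-face automatically, and being \emph{perfect} is the single additional requirement that a face of $T_{14}$ coincide with a face of $T_1$.

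To make the closure tractable I would build the chain with a reflective symmetry interchanging its two halves. Choose $w$ and a face $F_7$ of $T_7$ (not the one shared with $T_6$) so that, writing $H$ for the plane of $F_7$ and $\sigma$ for the reflection in $H$, one has $T_{7+k}=\sigma(T_{8-k})$ for $k=1,\dots,7$; then $T_7$ and $T_8$ meet along $F_7\subseteq H$ by construction, and $T_{14}=\sigma(T_1)$ meets $T_1$ face-to-face exactly when $T_1$ also has a face lying in $H$. Normalise the placement so that the chosen face $F_1$ of $T_1$ lies in the plane $\{z=0\}$ with $T_1$ contained in $\{z\ge 0\}$; the remaining placement freedom (horizontal translations and rotation about a vertical axis) leaves invariant both the inclination of $F_7$ to the horizontal and its height, so the condition $F_7\subseteq\{z=0\}$ becomes two scalar equations in $p,q$ — that $F_7$ be horizontal, and that it lie at height $0$.

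With closure reduced to two equations in two unknowns, I would run a finite search over the (few) choices of $w$ and of the two distinguished faces, and for a choice making the system solvable, solve for $(p,q)$, expecting a concrete algebraic solution. It then remains to check the open conditions: that $(p,q)$ gives an acute, non-equilateral face (so the tetrahedra are genuinely wild), that the half-chain $T_1,\dots,T_7$ lies in $\{z\ge 0\}$ and meets $\{z=0\}$ only in the two faces $F_1$ and $F_7$, and that $F_1$ and $F_7$ are disjoint. Granting this, the full chain $T_1,\dots,T_{14}$ is the half-chain together with its mirror image, it closes up perfectly, and it has no self-intersections: by the symmetry it suffices that no two of $T_1,\dots,T_7$ overlap (consecutive ones share only a face, so this is finitely many explicit pairwise checks) and that the half-chain and its reflection meet only along $H$, which follows from one-sidedness.

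The main obstacle is that nothing soft guarantees a suitable word exists: the two closure equations are generically solvable, but a solution is worthless unless the rolled half-chain actually folds onto one side of $H$ while staying embedded, and this global requirement is exactly what fails for short words and pushes the length up to $14$. So the real work is the search combined with verifying the one-sidedness and self-intersection-freeness of the explicit configuration it produces; discharging those open conditions rigorously, rather than merely reading them off a picture, is the delicate part — and were the extremal example not reflection-symmetric, one would lose the halving device and have to verify perfection and embeddedness for all $14$ tetrahedra directly.
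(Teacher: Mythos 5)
Your proposal follows essentially the same route as the paper: the authors also obtain the $14$-tetrahedron example by building a half-chain of $7$ wild tetrahedra (a proper multi-tetrahedral sphere with $16$ faces, parametrised by the two shape parameters of the common face as in Theorem~\ref{lemma:embbedingtetrahedron}), imposing that a face at each end lies in a common plane --- your two closure equations are exactly their property \textit{(T1)}, including the non-degeneracy clauses matching your one-sidedness and disjointness checks --- and then reflecting in that plane (Remark~\ref{remark:construction}, Proposition~\ref{prop:torus1}, Algorithm~\ref{alg:torus1}), with the combinatorial search and the algebraic verification of embeddability and self-intersection-freeness carried out by computer. The resulting solution is the chain with face edge lengths $\bigl(1,\tfrac{\sqrt{150+30\sqrt{5}}}{10},\tfrac{\sqrt{5}+1}{2}\bigr)$ of Figure~\ref{fig:smallest_torus}.
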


This perfect chain of wild tetrahedra, whose tetrahedra consist of triangles with edge lengths $\big(1,\tfrac{\sqrt{150+30\sqrt{5}}}{10},\tfrac{\sqrt{5}+1}{2}\big)$ is illustrated in Figure~\ref{fig:smallest_torus}. 
\begin{figure}[H]
  \centering  
  \includegraphics[height=5cm]{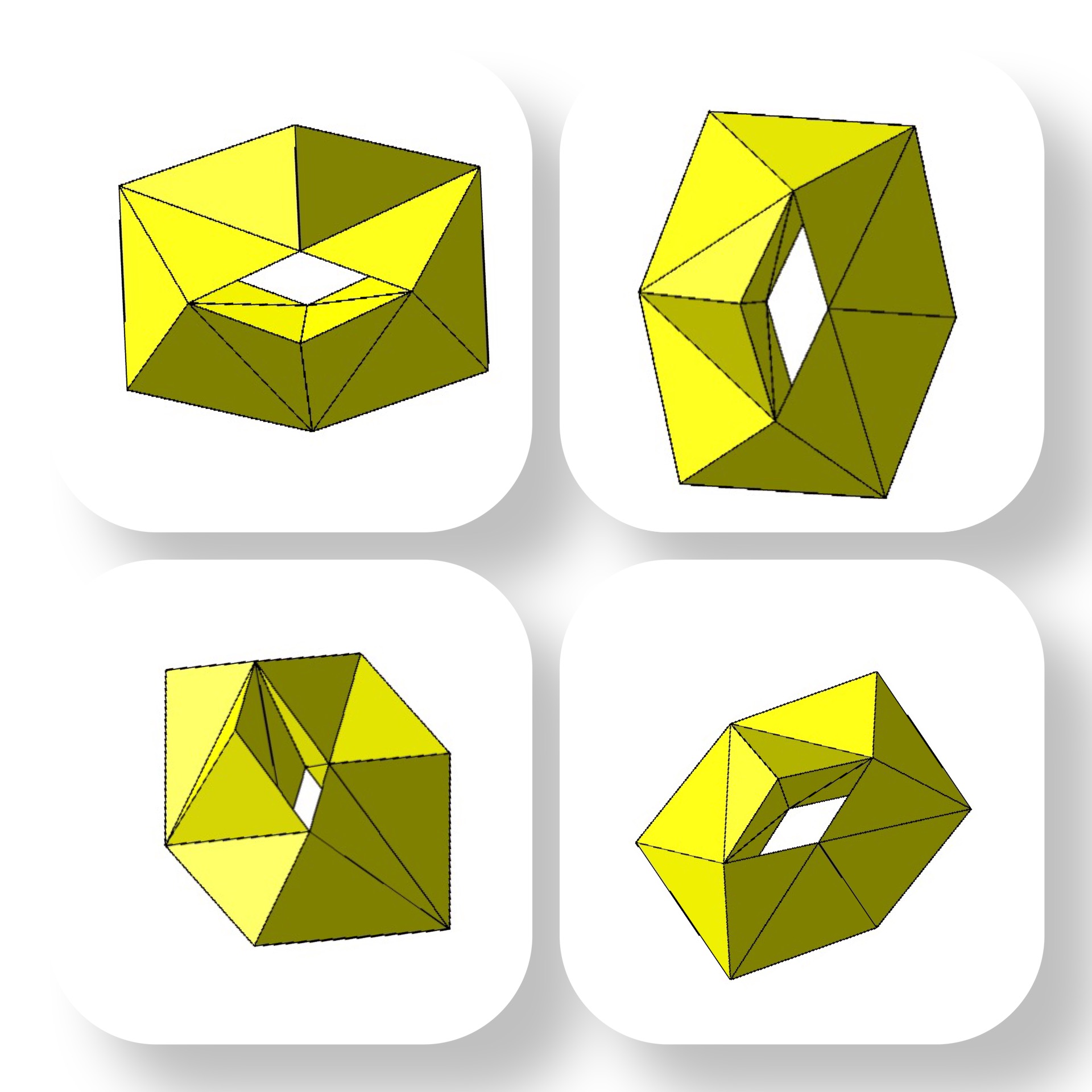}
  \caption{Various views of a perfect chain of wild tetrahedra with $14$ tetrahedra.}
  \label{fig:smallest_torus}
\end{figure} 
We conjecture the above perfect chain of wild tetrahedra to be minimal with respect to the number of tetrahedra. Further, we provide a census of the perfect chains of wild tetrahedra that we have discovered during our investigations. Moreover, we construct an infinite family of such chains.

\begin{theorem*}\label{theorem:numberOfChains}
 There exist infinitely many chains of wild tetrahedra consisting of non-isosceles faces.
\end{theorem*}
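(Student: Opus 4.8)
The plan is to exhibit an explicit one-parameter family of perfect chains of wild tetrahedra (or at least of genuine chains closing up into toroidal polyhedra) whose faces are scalene triangles, indexed by an integer parameter $n$ growing without bound, and to verify that distinct values of $n$ yield genuinely distinct chains. The natural starting point is the $14$-tetrahedron chain from the first theorem: the associated toroidal polyhedron is built from a cyclic arrangement of congruent ``building blocks'' (small clusters of wild tetrahedra glued along faces), and the idea is to show that one may insert additional copies of such a block, or splice together several shorter arcs, while keeping the loop closed. Concretely, I would first set up the combinatorics via the simplicial-surface correspondence established earlier in the paper: a chain of $n$ wild tetrahedra corresponds to a simplicial surface that is an annulus/torus, and ``meeting face-to-face'' becomes an edge-identification condition; the closing-up condition becomes a single equation on the composition of the face-gluing isometries.

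The key steps, in order, are: (i) parametrise a single building block by the edge lengths of its scalene face, writing the face-gluing isometry of one block as an explicit element of the Euclidean group $\mathrm{SE}(3)$ — essentially a screw motion with a rotation angle $\theta$ and translation determined by the triangle's shape; (ii) observe that a chain made of $k$ identical blocks closes up precisely when the $k$-fold composition of this screw motion is the identity, which forces $k\theta \equiv 0 \pmod{2\pi}$ together with vanishing of the net translation along the screw axis; (iii) show that by allowing the triangle shape to vary in a small interval, $\theta$ varies continuously and can be tuned so that $k\theta$ is a multiple of $2\pi$ for infinitely many $k$ (e.g.\ $\theta = 2\pi j/k$), while simultaneously the translational part is killed — here one needs the screw motion to be a pure rotation, i.e.\ the axis to pass appropriately, which should follow from a symmetry of the block or from a second free parameter; (iv) check non-degeneracy: the resulting triangles are scalene (not isosceles, hence not equilateral) for all but finitely many parameter values, so discarding those still leaves infinitely many chains, and chains with different numbers of tetrahedra are certainly non-isomorphic.

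I expect the main obstacle to be step (iii): simultaneously satisfying the rotational closure condition $k\theta \in 2\pi\mathbb{Z}$ and the translational closure condition (net displacement zero) with the single shape parameter of a scalene triangle. A clean way around this is to use blocks with an internal reflection symmetry so that the composite motion is automatically a rotation (no translation along the axis) for every value of the shape parameter; then only the angle condition remains, and a continuity/intermediate-value argument on $\theta$ as a function of the shape parameter produces, for each sufficiently large $k$, a scalene triangle whose block-rotation angle is exactly $2\pi/k$ (or $2\pi j/k$ with $\gcd(j,k)=1$), giving a closed chain of $k$ blocks. A secondary technical point is ensuring the faces never become isosceles along the chosen parameter values; since ``isosceles'' is a codimension-one condition cutting out finitely many parameter values in any bounded interval, one simply restricts to the complement, which is still infinite. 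Finally, I would remark that self-intersection is not claimed here — only existence of the chains as abstract (or immersed) polyhedra — so no embedding argument is required, which keeps the proof modular and short.
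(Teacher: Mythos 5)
Your rotational-ring strategy is a genuinely different route from the paper's, but as written it has a gap precisely at the step you yourself flag as the main obstacle, and that step is the entire mathematical content of the theorem. You never exhibit a concrete building block of wild tetrahedra, nor prove that any block exists whose end-to-end isometry is a pure rotation (rather than a proper screw motion) throughout an interval of scalene shape parameters, nor that the resulting angle $\theta$ is a non-constant continuous function of the shape whose range contains infinitely many values $2\pi j/k$. The proposed fix --- that an internal reflection symmetry of the block automatically kills the translational part --- is itself an unproven claim: one still has to verify that the axis is well defined, that the pitch vanishes identically in the parameter, and that the two end faces are matched compatibly with the wild-colouring so that consecutive tetrahedra share exactly one face. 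Since the whole point of the Steinhaus-type problem is that such closure conditions generically fail, existence cannot rest on ``should follow from a symmetry of the block''; it requires an explicit block and an explicit computation. Note also that a chain of wild tetrahedra is by definition a configuration of actual tetrahedra in $\mathbb{R}^3$ (Definition~\ref{defintion:tetrahedron}), so a geometric realisation is unavoidable even though non-self-intersection is not demanded.

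For comparison, the paper sidesteps the angle-tuning problem entirely. It fixes one specific scalene triangle $\ell(r_2,1)$, chosen by explicit algebraic conditions so that the tetra-helix $\mathcal{H}_6^{\ell(r_2,1)}$ of Construction~\ref{example:helix6} has its two terminal faces on parallel planes. It then grows the double tetra-helices $\mathcal{D}_{11+12n}^{\ell(r_2,1)}$ by splicing in two such $6$-blocks at a time, proving by induction (Theorem~\ref{thm:infinitefamily}) that the two terminal faces always satisfy property \textit{(T1)} of Definition~\ref{def:propertyt1}, and closes each loop by a single reflection in the common plane via Proposition~\ref{prop:torus1}. Thus the infinitude comes from increasing the combinatorial length at a \emph{fixed} non-isosceles triangle shape, and closure is achieved by a mirror symmetry rather than by solving a rotational closure equation; your approach instead varies the shape to meet a rotational condition at prescribed lengths. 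The honest version of your route would require choosing an explicit block, computing its screw parameters as functions of the two shape ratios, and using that two-dimensional freedom to kill the pitch while sweeping the angle --- a substantial computation that the outline defers entirely.
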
 

The key to constructing perfect chains of wild tetrahedra is to describe clusters of wild tetrahedra as polyhedra that correspond to \emph{simplicial surfaces}, see Definition~\ref{def:simplicialsurface} and Definition~\ref{definition:relation}. 
In particular, a simplicial surface can be used to describe the combinatorial structure of a polyhedron, whose faces are all triangles. Hence, such a simplicial surface yields the incidence relations between the vertices, edges and faces of the underlying polyhedron. 
Note that the edges incident to the same face of the corresponding polyhedron might differ in length. If the faces of the underlying polyhedron are all congruent triangles, the information about the edge lengths of the triangles can be encoded into the simplicial surface by introducing a $3$-colouring of the edges called a \emph{wild-colouring}, see \cite{simplicialsurfacebook}.
In literature, such a colouring is also known as a Grünbaum-colouring \cite{gruenbaum1,grunbaum1969conjecture}.
This colouring introduces the challenge of realising a given simplicial surface as a polyhedron consisting of scalene triangles as faces.
Using the combinatorial structure of a wild-coloured simplicial surface to find corresponding polyhedra in $\R^3$ opens up the construction of a wide range of polyhedra, whose geometric properties can be further analysed by exploiting group theoretic methods \cite{automorphism,icosahedron}.

 The simplicial surfaces that arise from clusters of wild tetrahedra form \emph{multi-tetrahedral surfaces}, see Definition~\ref{definition:relation}. 
 We focus on perfect chains of wild tetrahedra that give rise to simplicial tori, i.e.\ closed surfaces with Euler characteristic 1.
In this paper, we refer to such a simplicial torus as a \emph{multi-tetrahedral torus}.
 Spherical simplicial surfaces corresponding to clusters of wild tetrahedra have been well studied from a combinatorial and graph-theoretic perspective, see \cite{apolloniannetwork_Intro,Birkhoff_1930,JOUR,simplicialsurfacebook,takeo}. 
 Moreover, examining multi-tetrahedral surfaces from a geometric point of view facilitates various applications and investigations.
 For instance, given the combinatorics of a wild-coloured multi-tetrahedral surface, it is relatively uncomplicated to realise this wild-coloured simplicial surface as a polyhedron in $\R^3$ whose faces form triangles of the same congruence type, see Section~\ref{section:relation}.

\subsubsection*{Structure of the paper:}
This paper is structured as follows:
In order to construct perfect chains of wild tetrahedra, we introduce the theory of simplicial surfaces in Section~\ref{section:theorybackground}, as these surfaces form the foundation of our investigations. 
Since we examine polyhedra with congruent triangular faces, we give the notions of wild-colourings and embeddings of wild-coloured simplicial surfaces into $\R^3$ in Subsections~\ref{subsection:simplicialsurfaces} and~\ref{subsection:embeddings}.
In Section~\ref{subsection:chainsoftetra} we present a formal definition of clusters of wild tetrahedra and observe that the set of chains of wild tetrahedra is a special class of these clusters.
Moreover, we employ the theory of simplicial surfaces to study chains of wild tetrahedra by establishing a relation between these structures. This is achieved by introducing multi-tetrahedral surfaces in Section~\ref{section:relation}.
Furthermore, we present all embeddings of the \emph{wild-coloured simplicial tetrahedron} into $\R^3$, i.e.\ the simplicial surface corresponding to the regular tetrahedron that is equipped with a wild-colouring. We describe a procedure to construct embeddings of wild-coloured multi-tetrahedral surfaces in Section~\ref{section:embedcacti}. 
In addition, we present a method to construct a simplicial surface with corresponding embedding by applying affine transformations to the polyhedra corresponding to two given simplicial surfaces.
In Section~\ref{section:construction}, we describe the construction of perfect chains of wild tetrahedra using the theory of simplicial surfaces. In particular, we construct toroidal polyhedra with respect to reflection symmetries and self-intersections. These constructions are detailed in Algorithm~\ref{alg:torus1} and Algorithm~\ref{alg:torus2}.
Additionally, in Section~\ref{section:infinitefamily} we present an infinite family of toroidal polyhedra that is based on the double tetra-helices introduced in \cite{quadrahelix}. Moreover, we discuss a space-filling that has been introduced by Sommerville in \cite{sommerville} and elaborate on the possibility of using this space-filling to compute various examples of polyhedra of higher genera. We explore polyhedra of higher genera corresponding to chains of wild tetrahedra, which are elaborated in Section~\ref{section:highergenus}.
Finally, we discuss the implementations of our constructions and provide details on the computed census of toroidal polyhedra in Section~\ref{section:implementation}.

The methods to achieve the described constructions are implemented in the computer algebra systems GAP \cite{GAP4} and Maple \cite{maple}. We make use of the \textsc{GAP}-package \textit{SimplicialSurfaces} \cite{simplicialsurfacegap} to examine combinatorial properties of the multi-tetrahedral surfaces and the Maple-package \textit{SimplicialSurfaceEmbeddings} \cite{SimplSurfEmb} to study the geometric properties of the polyhedra that arise from these simplicial surfaces. The data corresponding to the toroidal polyhedra that we computed in our investigations are available in \cite{dataTori}. Note that all our computations to construct the different polyhedra in this paper have been carried out algebraically. Hence, we provide exact descriptions of the presented polyhedra by their vertex coordinates and edge lengths of the polyhedra without numerical errors.
Further, we computed HTML-files that allow the reader to visualise some of the constructed toroidal polyhedra and get a better understanding of these objects, see \cite{dataTori}. 
\section{Theoretical background}
\label{section:theorybackground}
Since we aim to conduct our investigations by translating given chains of wild tetrahedra into simplicial surfaces, we present basic definitions and notions from the theory of simplicial surfaces. Furthermore, we introduce the definitions of a wild-colouring and an embedding of a given simplicial surface.

\subsection{Simplicial surfaces}
\label{subsection:simplicialsurfaces}

In this subsection, we introduce the definition of a simplicial surface by exploiting the notion of a simplicial complex. 

\begin{definition}
Let $\emptyset \neq V$ be a finite set and $\Pot(V)$ be the power set of $V$. A non-empty set $X\subset \Pot(V)$ with $\emptyset \notin X$ defines a \emph{simplicial complex}, if $x\in X$ implies $ s\in X$ for all non-empty subsets $s$ of $x.$ We say that $s\in X$ is incident to $x\in X,$ if $s\subseteq x$ holds.
\end{definition}

\begin{definition}
  Let $X$ be a simplicial complex. We define $X_i$ as a set of all the sets in $X$ of cardinality $i+1$ elements.
  Furthermore, for $x\in X_j$ we define the set $X_i(x)$ as 
  \begin{align*}
       X_i(x):=
       \begin{cases}
    \{s \in X_i \mid s\subseteq x \}, & i \leq j \\
    \{s \in X_i \mid x\subseteq s\in X\}, & i>j.
  \end{cases}
  \end{align*}
\end{definition}
For simplicity, we write $v$ for an element $\{v\}\in X_0.$ Thus, the set $X_0$ is represented by the underlying set $\bigcup_{v\in X_0}v\subseteq V.$
Using the above definition of a simplicial complex, we define simplicial surfaces.

\begin{definition}\label{def:simplicialsurface}
We say that a simplicial complex $X$ is a \emph{simplicial surface}, if the following conditions are satisfied:
  \begin{enumerate}
    \item $\vert x \vert \leq 3$ for all $x\in X$,
    \item $\vert X_2(e)\vert=2$ for all $e\in X_1$ and
    \item for all $v\in X_0$, there exists an $n\in \mathbb{N}_{\geq 3}$ such that $X_2(v)$ contains exactly $n$ elements that satisfy the following: These $n$ elements can be arranged in a sequence $(f_1,\ldots,f_n)$ such that for $i=1,\ldots,n$ the statements $v\in X_0(f_i)$ and $\vert f_i\cap f_{i+1}\vert =2$ hold, where we read the subscripts modulo $n.$ We denote the \emph{degree of $v$}, i.e.\ the number of faces that are incident to $v$, by $\deg(v)=n.$ (Umbrella condition)
  \end{enumerate}
  \end{definition}
   Note that the definition of a simplicial surface introduced in \cite{simplicialsurfacebook} is more general, but in this paper, we simplify the definition of a simplicial surface by excluding open simplicial surfaces and surfaces that cannot be described as a simplicial complex, i.e.\ non-vertex-faithful simplicial surfaces.
Here, we refer to the elements of $X_0,X_1$ and $X_2$ as the \emph{vertices}, \emph{edges} and \emph{faces} of the simplicial surface $X$, respectively.
Since the vertices and edges of a simplicial surface $X$ are subsets of the faces of $X$, the surface $X$ is uniquely determined by its set of faces. 
For instance, the \emph{simplicial tetrahedron}, illustrated in Figure~\ref{fig:simplicialsurfaces}, is the simplicial surface that is determined by the following faces:
  \[
  \{\{v_1,v_2,v_3\},\{v_1,v_2,v_4\},\{v_1,v_3,v_4\},\{v_2,v_3,v_4\}\}.
  \]
    Next, we introduce the notion of two simplicial surfaces being isomorphic and the Euler characteristic of a simplicial surface. 
  \begin{definition}\label{def:isomorphic}
      Two simplicial surfaces $X$ and $Y$ are said to be \emph{isomorphic}, if there exists a bijective map $g:X\to Y$ such that for all $s,x \in X$ the incidence $s\subseteq x$ holds in $X$ if and only if $g(s)$ is incident to $g(x)$ in $Y$.
  \end{definition}
\begin{definition}
Let $X$ be a simplicial surface. The \emph{Euler characteristic} of $X$ is given by 
\[\chi(X)=\vert X_0\vert-\vert X_1\vert+\vert X_2\vert.\]
\end{definition}
 Properties such as the genus of a surface, connectivity, orientability, etc.\ are defined in the usual way, see \cite{simplicialsurfacebook}. We refer to a connected, orientable simplicial surface as a \emph{simplicial torus} if its Euler characteristic is $0$, and as a \emph{simplicial sphere} if its Euler characteristic is $2$.
Now, we formally define the wild-colouring of a simplicial surface.

\begin{definition}\label{def:wildcolouring}
  A \emph{wild-colouring} of a simplicial surface $X$ is a map $\omega: X_1 \to \{1,2,3\}$, such that for any face $f\in X_2$ the restriction of $\omega$ to $X_1(f)$ yields a bijection. We call $(X,\omega)$ a \emph{wild-coloured simplicial surface}.
\end{definition}
In this paper, we illustrate the images $1,2,3$ of the edges of $X$ under a wild-colouring $\omega$ by the colours red,blue, green, respectively. As an example, we illustrate a wild-colouring of the simplicial tetrahedron in Figure~\ref{fig:wildcoloured_tetrahedra}
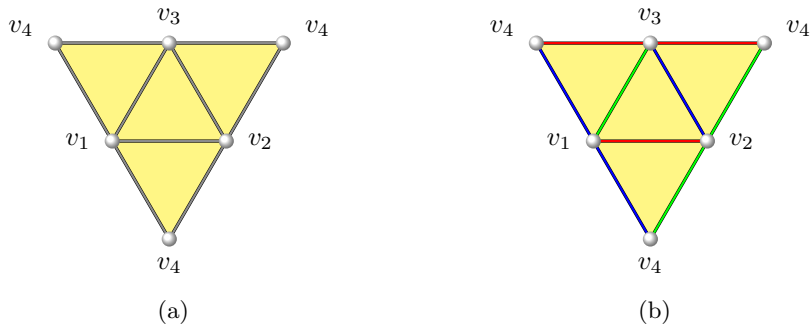
\begin{figure}[H]
  \centering
  \begin{subfigure}{.3\textwidth}
  \centering
\begin{tikzpicture}[vertexBall, edgeDouble=nolabels, faceStyle=nolabels, scale=1.5]

\coordinate (V1_1) at (0., 0.);
\coordinate (V2_1) at (1., 0.);
\coordinate (V3_1) at (0.4999999999999999, 0.8660254037844386);
\coordinate (V4_1) at (0.5000000000000001, -0.8660254037844386);
\coordinate (V4_2) at (-0.4999999999999999, 0.8660254037844385);
\coordinate (V4_3) at (1.5, 0.8660254037844388);

\fill[face]  (V2_1) -- (V3_1) -- (V1_1) -- cycle;
\node[faceLabel] at (barycentric cs:V2_1=1,V3_1=1,V1_1=1) {$1$};
\fill[face]  (V1_1) -- (V4_1) -- (V2_1) -- cycle;
\node[faceLabel] at (barycentric cs:V1_1=1,V4_1=1,V2_1=1) {$2$};
\fill[face]  (V2_1) -- (V4_3) -- (V3_1) -- cycle;
\node[faceLabel] at (barycentric cs:V2_1=1,V4_3=1,V3_1=1) {$3$};
\fill[face]  (V3_1) -- (V4_2) -- (V1_1) -- cycle;
\node[faceLabel] at (barycentric cs:V3_1=1,V4_2=1,V1_1=1) {$4$};

\draw[edge] (V2_1) -- node[edgeLabel] {$1$} (V1_1);
\draw[edge] (V1_1) -- node[edgeLabel] {$2$} (V3_1);
\draw[edge] (V4_1) -- node[edgeLabel] {$3$} (V1_1);
\draw[edge] (V1_1) -- node[edgeLabel] {$3$} (V4_2);
\draw[edge] (V3_1) -- node[edgeLabel] {$4$} (V2_1);
\draw[edge] (V2_1) -- node[edgeLabel] {$5$} (V4_1);
\draw[edge] (V4_3) -- node[edgeLabel] {$5$} (V2_1);
\draw[edge] (V4_2) -- node[edgeLabel] {$6$} (V3_1);
\draw[edge] (V3_1) -- node[edgeLabel] {$6$} (V4_3);

\vertexLabelR{V1_1}{left}{$ $}
\vertexLabelR{V2_1}{left}{$ $}
\vertexLabelR{V3_1}{left}{$ $}
\vertexLabelR{V4_1}{left}{$ $}
\vertexLabelR{V4_2}{left}{$ $}
\vertexLabelR{V4_3}{left}{$ $}
\node at (0.5,1.1) {$v_3$};
\node at (-0.3,0.) {$v_1$};
\node at (1.3,0.) {$v_2$};
\node at (0.5,-1.1) {$v_4$};
\node at (1.8,1.) {$v_4$};
\node at (-0.8,1.) {$v_4$};
\end{tikzpicture}
\caption{}\label{fig:simplicialsurfaces}
  \end{subfigure}
  \begin{minipage}{0.1\textwidth}
    \phantom{a}
  \end{minipage}
  \begin{subfigure}{.3\textwidth}
  \centering
\begin{tikzpicture}[vertexBall, edgeDouble=nolabels, faceStyle=nolabels, scale=1.5]

\coordinate (V1_1) at (0.5, 0.8660254037844386);
\coordinate (V2_1) at (0, 0);
\coordinate (V3_1) at (1, 0);
\coordinate (V4_1) at (0.4999999999999999, -0.8660254037844386);
\coordinate (V4_2) at (-0.4999999999999998, 0.8660254037844387);
\coordinate (V4_3) at (1.5, 0.8660254037844385);

\fill[face]  (V3_1) -- (V4_1) -- (V2_1) -- cycle;
\node[faceLabel] at (barycentric cs:V3_1=1,V4_1=1,V2_1=1) {$1$};
\fill[face]  (V1_1) -- (V4_3) -- (V3_1) -- cycle;
\node[faceLabel] at (barycentric cs:V1_1=1,V4_3=1,V3_1=1) {$2$};
\fill[face]  (V2_1) -- (V4_2) -- (V1_1) -- cycle;
\node[faceLabel] at (barycentric cs:V2_1=1,V4_2=1,V1_1=1) {$3$};
\fill[face]  (V2_1) -- (V1_1) -- (V3_1) -- cycle;
\node[faceLabel] at (barycentric cs:V2_1=1,V1_1=1,V3_1=1) {$4$};

\draw[edge=green] (V1_1) -- node[edgeLabel] {$2$} (V2_1);
\draw[edge=blue] (V3_1) -- node[edgeLabel] {$4$} (V1_1);
\draw[edge=red] (V1_1) -- node[edgeLabel] {$6$} (V4_2);
\draw[edge=red] (V4_3) -- node[edgeLabel] {$6$} (V1_1);
\draw[edge=red] (V3_1) -- node[edgeLabel] {$1$} (V2_1);
\draw[edge=blue] (V2_1) -- node[edgeLabel] {$3$} (V4_1);
\draw[edge=blue] (V4_2) -- node[edgeLabel] {$3$} (V2_1);
\draw[edge=green] (V4_1) -- node[edgeLabel] {$5$} (V3_1);
\draw[edge=green] (V3_1) -- node[edgeLabel] {$5$} (V4_3);

\vertexLabelR{V1_1}{left}{$ $}
\vertexLabelR{V2_1}{left}{$ $}
\vertexLabelR{V3_1}{left}{$ $}
\vertexLabelR{V4_1}{left}{$ $}
\vertexLabelR{V4_2}{left}{$ $}
\vertexLabelR{V4_3}{left}{$ $}

\node at (0.5,1.1) {$v_3$};
\node at (-0.3,0.) {$v_1$};
\node at (1.3,0.) {$v_2$};
\node at (0.5,-1.1) {$v_4$};
\node at (1.8,1.) {$v_4$};
\node at (-0.8,1.) {$v_4$};
\end{tikzpicture}
\caption{}\label{fig:wildcoloured_tetrahedra}
  \end{subfigure}
  \caption{The simplicial tetrahedron (a) and a wild-coloured simplicial tetrahedron with red, blue and green edges (b).}
  \label{fig:enter-label}
\end{figure}

A wild-colouring enriches the combinatorial structure of a simplicial surface.  In \cite{simplicialsurfacebook}, Niemeyer et al.\ show that these wild-coloured surfaces possess a fruitful group theoretical structure. 
We refer to this publication for more details.

In this paper, all the simplicial surfaces are wild-coloured and we denote a wild-coloured simplicial surface $(X,\omega)$ as $X$.
Next, we describe how to attach and remove a (simplicial) tetrahedron from a simplicial surface on a combinatorial level.
We therefore define the \emph{symmetric difference} $M { \Delta} N$ of sets $M,N$ by $(M \cup N) \setminus (M \cap N).$ 

\begin{definition}
\label{def:extension}
  Let $X$ be a simplicial surface and $f=\{v_1,v_2,v_3\}$ be a face of $X$. For $v\notin X_0,$ we define the set 
  \[
  t=\{\{v_1,v_2,v_3\},\{v_1,v_2,v\},\{v_1,v_3,v\},\{v_2,v_3,v\}\}.
  \]
 Then, the simplicial surface $T^f(X)$ is defined by the following faces:
\[
X_2 {\Delta} t.
\]
We say that $T^f(X)$ is constructed by \emph{attaching a tetrahedron onto $X.$} 
\end{definition}
For instance, attaching a tetrahedron to any face of the simplicial tetrahedron yields the \emph{simplicial double-tetrahedron} which forms a simplicial surface with $5$ vertices, $9$ edges and $6$ faces.

\begin{definition}
\label{def:reduction}
  Let $X$ be a simplicial surface that is not isomorphic to the simplicial tetrahedron and let $v$ be a vertex of degree $3$ with $X_2(v)=\{f_1,f_2,f_3\}$. For these faces, there exist vertices $v_1,v_2,v_3\in X_0$ such that 
\[
(f_1,f_2,f_3)=(\{v_1,v_2,v\},\{v_1,v_3,v\},\{v_2,v_3,v\}).
\]
The simplicial surface $T_v(X)$ with set of faces
\[
X_2 {\scriptstyle \Delta} \{\{v_1,v_2,v\},\{v_1,v_3,v\},\{v_2,v_3,v\}),\{v_1,v_2,v_3\}\}
\]
is said to be constructed from $X$ by \emph{removing a tetrahedron.}
\end{definition}
The procedures that are presented in Definition~\ref{def:extension} and Definition~\ref{def:reduction} are also called \emph{tetrahedral extension} and \emph{tetrahedral reduction}, respectively.
Note that a simplicial surface that results from applying a tetrahedral extension (reduction) to a wild-coloured simplicial surface can be equipped with a wild-colouring that arises from extending (reducing) the wild-colouring of the given surface.

\subsection{Embedding simplicial surfaces into $\mathbb{R}^3$}
\label{subsection:embeddings}

In order to realise a wild-coloured simplicial surface as a polyhedron, we have to compute an embedding of the given surface in $\R^3$ which we define in this section.
For simplicity, we define $\Lambda$ as a set that consists of all triples of positive real numbers satisfying the triangle inequalities. Hence, for $(a,b,c)\in \Lambda$ there exists a triangle with edge lengths $a,b,c$ satisfying:
\[
a+b\geq c,\phantom{a}a+c\geq b,\phantom{a}b+c\geq a.
\]

\begin{definition}\label{def:embedding}
Let $X$ be a wild-coloured simplicial surface. A map $\phi: X_0\mapsto \mathbb{R}^3$ is called a \emph{weak $(\ell_1, \ell_2, \ell_3)$-embedding} of $X$ if $(\ell_1, \ell_2, \ell_3)\in \Lambda$ is satisfied and all adjacent vertices $v_1$ and $v_2$ of $X$ satisfy the distance equation
\[
\|\phi(v_1) -\phi(v_2)\| = \ell_p,
\]
 where $\|\cdot\|$ is the Euclidean norm and the edge $e=\{v_1,v_2\}$ is coloured $p \in \{1,2,3\}$. If $\phi$ is injective, then $\phi$ is called a \emph{strong $(\ell_1, \ell_2, \ell_3)$-embedding} of $X.$ 
\end{definition}
Thus, an $(a,b,c)$-embedding gives rise to a polyhedron whose faces form congruent triangles with edge lengths $(a,b,c).$ 
\begin{remark}
    Let $X$ be a wild-coloured simplicial surface and $\phi$ be a weak $(a,b,c)$-embedding of $X.$ For a subset $M\subseteq X_0,$ we define  $\phi(M):=\{\phi(v)\mid v\in M\}.$ Hence, for an edge $e=\{v,v'\}$ of $X$ and a face $f=\{w,w',w''\}$ of $X$, the equalities $\phi(e)=\{\phi(v),\phi(v')\}$ and $\phi(f)=\{\phi(w),\phi(w'),\phi(w'')\}$ hold.
\end{remark}
An example of  a strong embedding of a wild-coloured simplicial tetrahedron id given below. Note that $x^t$ for $x\in \mathbb{R}^3$ denotes the transposed vector $x.$
\begin{example}
Let $\mathcal{T}$ be the wild-coloured simplicial tetrahedron with $\mathcal{T}_0=\{v_1,v_2,v_3,v_4\}.$ Then, a strong $(1,\tfrac{9}{10},\tfrac{3}{4})$-embedding $\phi$ of $\mathcal{T}$ is determined by the following coordinates:
\[
\left(\phi(v_1),\phi(v_2),\phi(v_3),\phi(v_4) \right) =((\tfrac{1}{2}, 0, 0)^t,(-\tfrac{1}{2}, 0, 0)^t,(-\tfrac{99}{800},\tfrac{\sqrt{150199}}{800},\tfrac{\sqrt{298}}{40})^t,(\tfrac{99}{800},-\tfrac{\sqrt{150199}}{800},\tfrac{\sqrt{298}}{40})^t).
\]
\end{example}

\begin{figure}[H]
  \centering
\includegraphics[height=3.75cm]{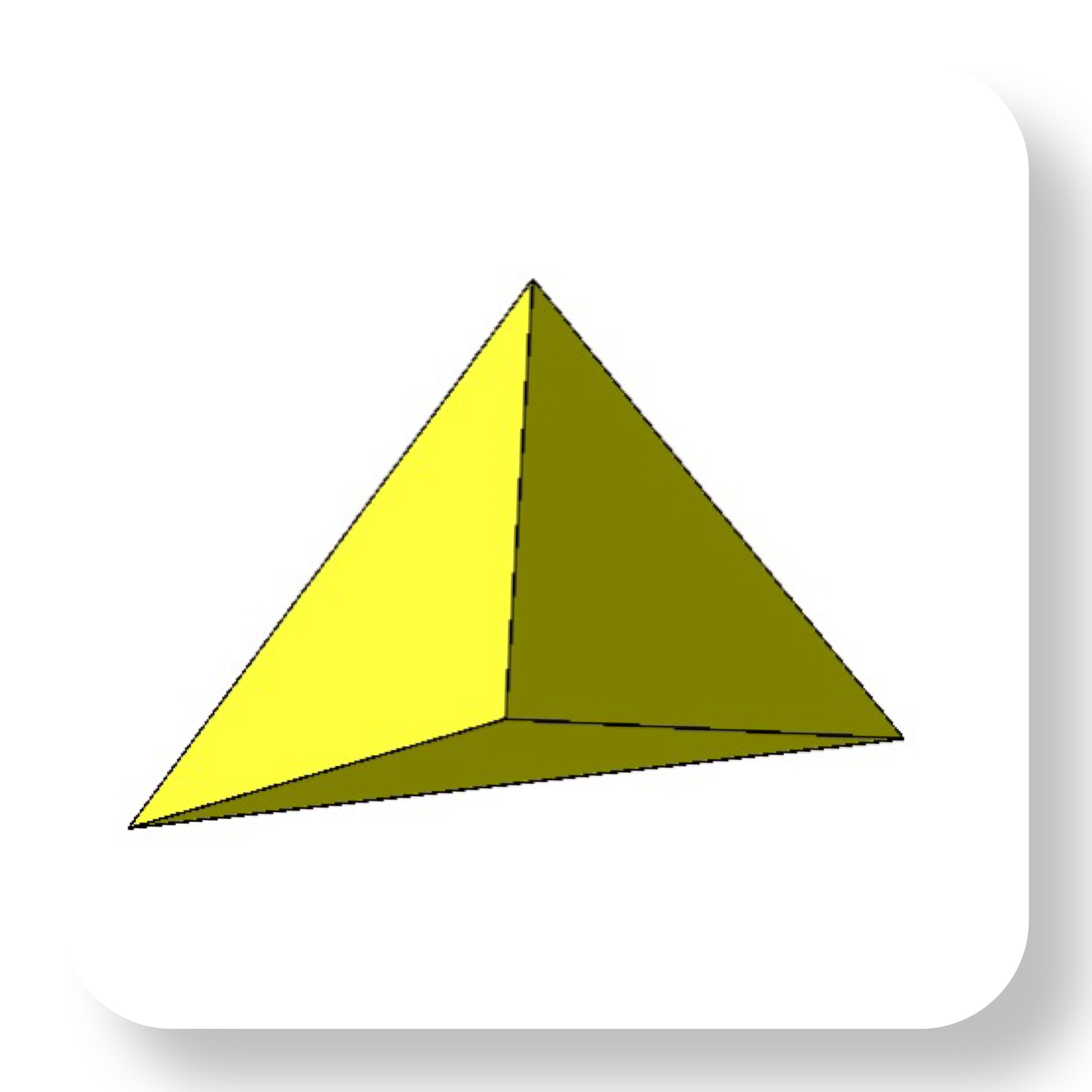}
  \caption{Polyhedron arising from a $(1,\tfrac{9}{10},\tfrac{3}{4})$-embedding of the wild-coloured simplicial tetrahedron}
  \label{fig:wildcolouredtetrahedron}
\end{figure}

In general, one has to solve a system of multivariate quadratic equations to construct an embedding of a given wild-coloured simplicial surface in $\R^3$. Finding solutions for such systems can be a strenuous task. In \cite{icosahedron}, Brakhage et al.\ examine such a system of quadratic equations and construct all $(1,1,1)$-embeddings of the combinatorial icosahedron such that the symmetry groups of the resulting polyhedra are non-trivial.
In addition, we refer to \cite{ghentpaper} for the realisations of the incidence structures of the icosahedron, the dodecahedron and various Archimedian solids as polyhedra in $\R^3$ with a prescribed symmetry group and all edge lengths being $1$.

 For a given wild-coloured simplicial surface, it is not clear whether the surface can be embedded into the Euclidean $3$-space as a polyhedron consisting of congruent triangular faces.
On the other hand, for the wild-coloured simplicial surfaces that arise from chains of wild tetrahedra, there is more hope. In Section~\ref{section:construction}, we present all embeddings of the wild-coloured simplicial tetrahedron such that the faces of the resulting polyhedra are acute- or right-angled triangles.
We further exploit these embeddings to obtain embeddings of the wild-coloured simplicial surfaces that arise from our chains of wild tetrahedra.

\section{Clusters of wild tetrahedra}
\label{subsection:chainsoftetra}
In this section, we formally introduce clusters and chains of wild tetrahedra and their properties along with various examples. We also present the tetra-symbol of a cluster of wild tetrahedra which yields a combinatorial description of the given cluster. First, we give an abbreviated description of the tetrahedra in the following remark. For simplicity, we denote the convex hull of a subset $M$ of $\R^3$ by $\conv(M).$ 

\begin{remark}
We view a \emph{tetrahedron} $T$ as the convex hull of four vertices in $\R^3$, i.e.\ $T=\conv(\{p_1,\ldots,p_4\}),$ where $p_1,\ldots,p_4$ are three-dimensional coordinates. Thus,
\begin{itemize}
  \item for all $1\leq i<j\leq 4$ the set $\conv(\{p_i,p_j\})$ is an edge of the tetrahedron $T,$  
  \item for all $1\leq i<j<k\leq 4$ the set $\conv(\{p_i,p_j,p_k\})$ forms a face of the tetrahedron $T.$ 

\end{itemize}
\end{remark}
With this notion, we can define clusters and chains of wild tetrahedra.
\begin{definition}
\label{defintion:tetrahedron}
\begin{enumerate}[leftmargin=*]
  \item A tetrahedron $T$ is said to be \emph{wild}, if all the faces of $T$ are congruent triangles with edge lengths $(a,b,c) \in \Lambda.$
  \item Let $T_1, \ldots, T_n$ be pairwise distinct wild tetrahedra. For $n\geq 2$ a sequence $(T_1,\ldots,T_n)$ is called a \emph{cluster of wild tetrahedra}, if for $j=2,\ldots,n$ there exists $1\leq i<j$ such that the tetrahedra $T_i$ and $T_{j}$ have exactly one face in common. Equivalently, $T_i$ and $T_{j}$ share exactly three vertices and three edges.
   \item A cluster of wild tetrahedra $(T_1,\ldots,T_n)$ is called a \emph{chain of wild tetrahedra} if for all $1\leq i\leq n-1$ the tetrahedra $T_i$ and $T_{i+1}$ have a face in common. The chain $(T_1,\ldots,T_n)$ is called \emph{perfect} if additionally $T_1$ and $T_n$ share a common face.
\end{enumerate}
\end{definition} Note that if wild tetrahedra $T_1$ and $T_2$ meet face-to-face then all their faces have edge lengths $(a,b,c)\in \Lambda$.
 We define the set of all vertices, edges and faces of the tetrahedra $T_1,\ldots, T_n$ of a cluster of wild tetrahedra $\tau=(T_1,\ldots,T_n)$ by $\mathcal{V}_\tau,\mathcal{E}_\tau$ and $\mathcal{F}_\tau$, respectively. It can be observed that the incidence structure of a wild tetrahedron can be described by the wild-coloured simplicial tetrahedron.
 In the following, we illustrate some examples of clusters of wild tetrahedra.

\begin{example}
Up to isomorphism, isometry and choice of edge lengths, there exist exactly three clusters of wild tetrahedra consisting of exactly four wild tetrahedra.
Figure~\ref{fig:4cacti} illustrates the three possibilities of forming clusters consisting of four regular tetrahedra. 

\begin{figure}[H]
  \centering
\begin{subfigure}{.3\textwidth}
\centering
\includegraphics[height=3.8cm]{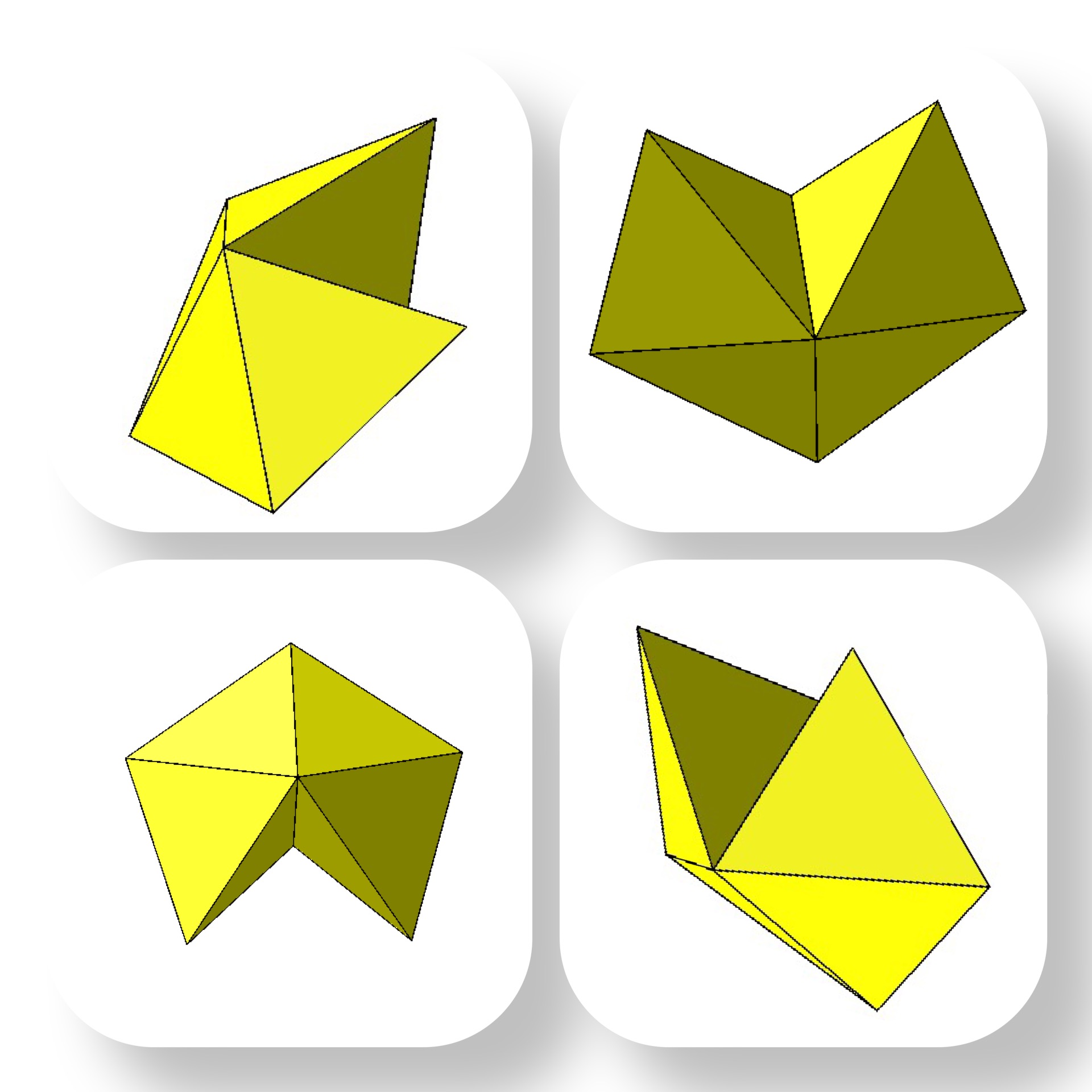}
\caption{}
\label{a}
\end{subfigure}
\begin{subfigure}{.3\textwidth}
\centering
\includegraphics[height=3.8cm]{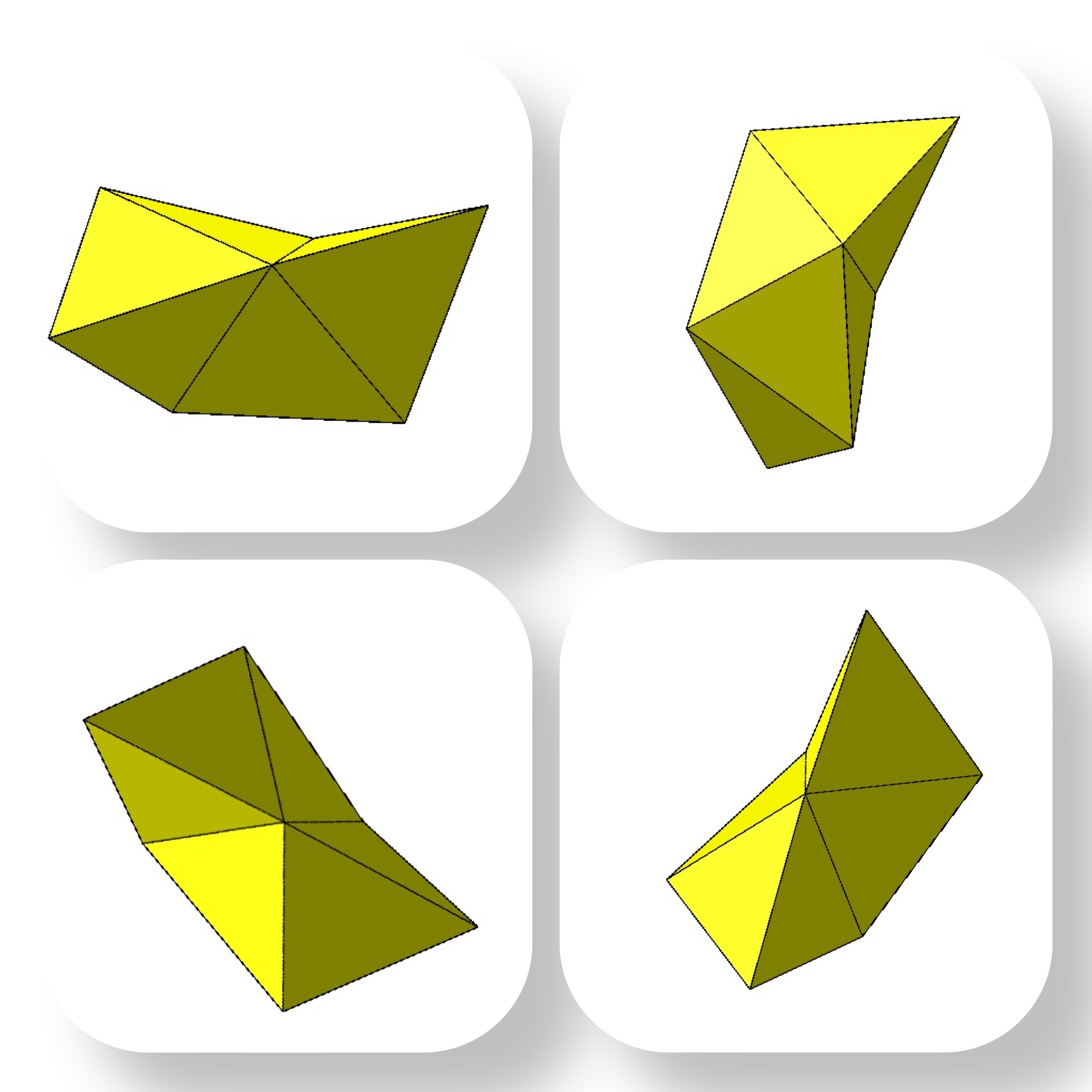}
\caption{}
\label{b}
\end{subfigure}
\begin{subfigure}{.3\textwidth}
\centering
\includegraphics[height=3.8cm]{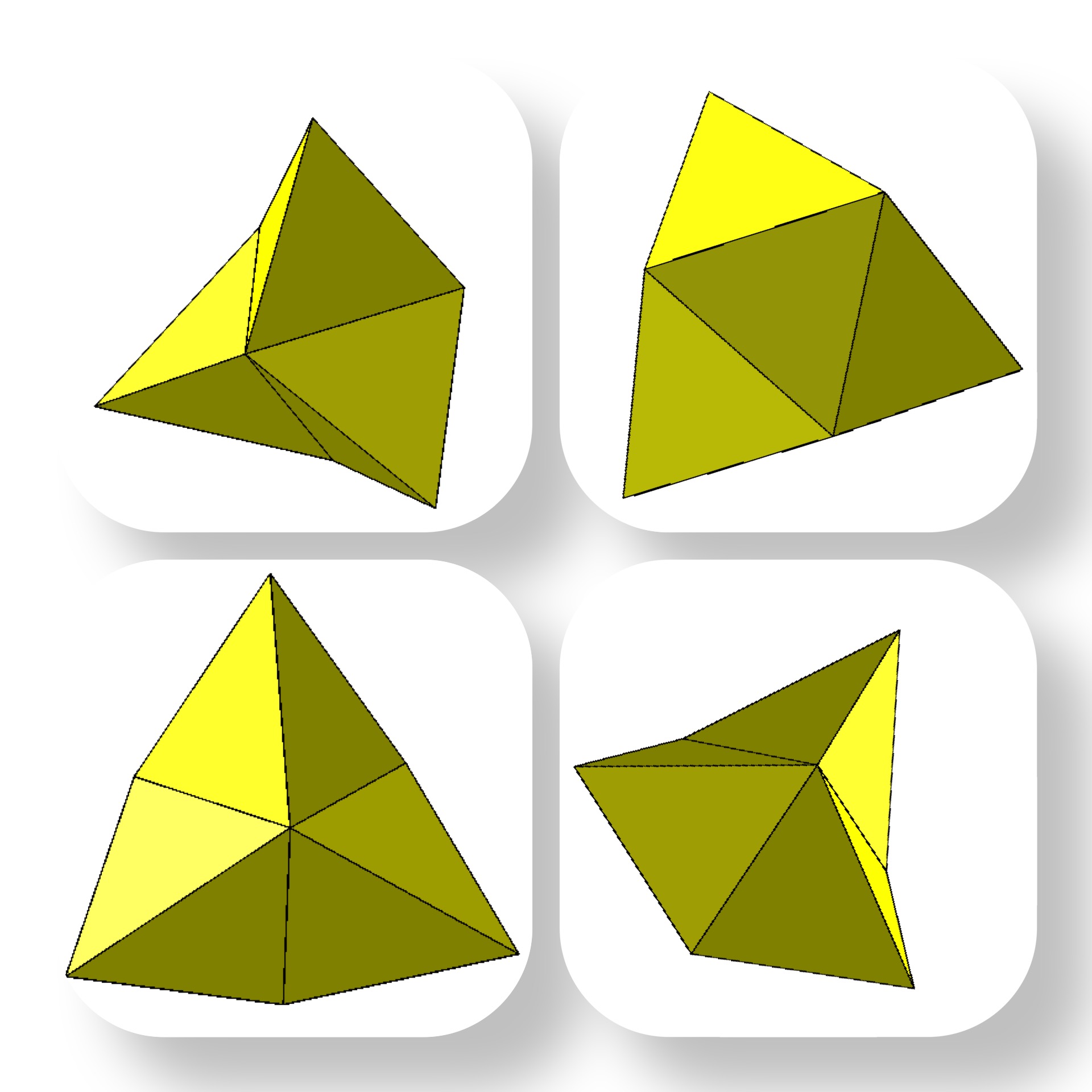}
\caption{}
\label{c}
\end{subfigure}
  \caption{Various views of two chains ((a) and (b)) and a cluster (c) of wild tetrahedra consisting of four regular tetrahedra.}
  \label{fig:4cacti}
\end{figure}
For additional examples of chains consisting of three and six tetrahedra, we refer to Figure~\ref{fig:tetrahedrasequence}.
\end{example}

In order to represent a cluster of wild tetrahedra concisely, we introduce a notation called the \emph{tetrahedral-symbol}. This symbol facilitates an easy description of the constructions of larger clusters of wild tetrahedra. 
First, we present a vertex-colouring of the tetrahedra of a given cluster.

\begin{definition}
Let $\tau=(T_1,\ldots,T_n)$ be a cluster of wild tetrahedra. Further, for $1\leq i \leq n$ let $V_i$ be the set of vertices of the tetrahedron $T_i$. Thus, $T_i=\conv(V_i)$ holds. A \emph{tetra-colouring} is a map $\gamma:\mathcal{V}_\tau\to \{1,2,3,4\}$ such that for all $1\leq i \leq n$ the restriction to $V_i$ yields a bijection.
\end{definition}

\begin{example}
Let $T_1$ and $T_2$ be regular tetrahedra given by
\begin{align*}
    &T_1=\conv(\underset{=V_1}{\underbrace{\{(0, 0, 0)^t, (1, 0, 0)^t, (\tfrac{1}{2}, \tfrac{\sqrt{3}}{2}, 0)^t, (\tfrac{1}{2}, \tfrac{\sqrt{3}}{6}, \tfrac{\sqrt{6}}{3})^t\}}}),\\
    &T_2=\conv(\underset{=V_2}{\underbrace{\{(0, 0, 0)^t, (1, 0, 0)^t, (\tfrac{1}{2}, \tfrac{\sqrt{3}}{2}, 0)^t, (\tfrac{1}{2}, \tfrac{\sqrt{3}}{6}, -\tfrac{\sqrt{6}}{3})^t\}}}).
\end{align*}
Thus, $\tau=(T_1,T_2)$ forms a chain of wild tetrahedra consisting of two regular tetrahedra with 
\begin{align*}
\mathcal{V}_\tau =V_1 \cup V_2=\{(0, 0, 0)^t, (1, 0, 0)^t, (\tfrac{1}{2}, \tfrac{\sqrt{3}}{2}, 0)^t, (\tfrac{1}{2}, \tfrac{\sqrt{3}}{6}, \tfrac{\sqrt{6}}{3})^t,(\tfrac{1}{2}, \tfrac{\sqrt{3}}{6}, -\tfrac{\sqrt{6}}{3})^t\}.
\end{align*}
Hence, a tetra-colouring of the chain $\tau$ is given by $\gamma:\mathcal{V}_\tau\to \{1,2,3,4\},$ where the images of the vertices in $\mathcal{V}_\tau$ are defined as follows:
\begin{align*}
(\gamma((0, 0, 0)^t), \gamma((1, 0, 0)^t), \gamma((\tfrac{1}{2}, \tfrac{\sqrt{3}}{2}, 0)^t), \gamma((\tfrac{1}{2}, \tfrac{\sqrt{3}}{6}, \tfrac{\sqrt{6}}{3})^t),\gamma((\tfrac{1}{2}, \tfrac{\sqrt{3}}{6}, -\tfrac{\sqrt{6}}{3})^t))=(1,2,3,4,4).
\end{align*}
\end{example}
Now, we give the notation of the tetrahedral-symbol. This notation has been first introduced and discussed in \cite{simplicialsurfacebook}.

\begin{definition}
  Let $\tau=(T_1,\ldots,T_{n})$ be a cluster of wild tetrahedra, where the tetrahedra consist of triangles with edge lengths $(a,b,c)\in \Lambda$. Let $\gamma:\mathcal{V}_{\tau}\to \{1,2,3,4\}$ be a tetra-colouring of $\tau$. Furthermore, for $1\leq i \leq n$ we denote the set of vertices of the tetrahedron $T_i$ by $V_i.$ This implies $T_i=\conv(V_i)$. For $n=1$, the \emph{tetra-symbol} of $\tau$ is given by $\sigma_{\tau}:=()^{(a,b,c)}.$ For $n>1,$ we define the \emph{tetra-symbol} of $\tau$ by $\sigma_\tau:=({(m_1)}_{k_ 1}\ldots{(m_{n-1})}_{k_ {n-1}})^{(a,b,c)},$ where the integers $m_1,\ldots m_{n-1},k_ 1,\ldots,k_{n-1}$
  are defined as follows:
  \begin{itemize}
  \item for $1\leq i \leq n-1,$ the integer $1\leq m_i \leq i$ is defined as the smallest number such that the tetrahedra $T_{m_i}$ and $T_{i+1}$ have exactly one face in common,
  \item for $1 \leq i \leq n-1,$ the integer $k_i$ is defined as $k_ i:=\gamma(p)=\gamma(p'),$ where the vertices $p,p'\in \mathbb{R}^3$ satisfy $p\in V_{i+1}\setminus (V_{m_i}\cap V_{i+1})$ and $p'\in V_{m_i}\setminus (V_{m_i}\cap V_{i+1}).$  
  \end{itemize}
\end{definition}
As an example, we consider a chain of three regular tetrahedra (with all edge lengths being $1$). This chain of wild tetrahedra can be described by the symbol $(1_11_2)^{(1,1,1)}.$ Additionally, the tetra-symbols of the clusters of wild tetrahedra shown in Figure~\ref{a}, \ref{b} and \ref{c} are given by
\begin{align*}
(1_42_13_4)^{(1,1,1)},\, (1_42_13_2)^{(1,1,1)}\, \text{and} \,
(1_42_12_3)^{(1,1,1)},
\end{align*}
respectively.
To enhance clarity, we illustrate the above definition with the following explanation:
The integers $m_i$ and $k_i$ of a tetra-symbol $\sigma_\tau$ give insights about a given cluster of wild tetrahedra $\tau.$ More precisely, the entry ${(m_i)}_{k_i}$ of $\sigma_\tau$ corresponds to the fact that the $(i+1)$-st tetrahedron is added to the cluster consisting of the first $i$ tetrahedra so that the $m_i$-th tetrahedron and the new tetrahedron have coinciding faces. Moreover, the coinciding face of the $m_i$-th tetrahedron does not contain a vertex of colour $k_i$ with respect to the corresponding tetra-colouring. It can be observed that the tetra-symbol of a given cluster of wild tetrahedra $\tau$ depends on the choice of the tetra-colouring.
For instance, $(1_1)^{(1,1,1)},(1_2)^{(1,1,1)},(1_3)^{(1,1,1)}$ and $(1_4)^{(1,1,1)}$ are distinct tetra-symbols that describe a chain of two regular tetrahedra (with all edge lengths being $1$).  

\begin{remark}
\label{remark:reconstruct}
A cluster of wild tetrahedra $\tau$ can be completely reconstructed from its tetra-symbol $\sigma_\tau$. 
\end{remark}
 To illustrate the advantages of the above symbol, we introduce two families of chains of wild tetrahedra, namely the tetra-helices and the double tetra-helices. In the case that the tetra-helices and double tetra-helices consist of regular tetrahedra, they have been largely investigated in \cite{quadrahelix}. 
 
\begin{definition}
Let $(a,b,c)\in \Lambda$ be a triple. Furthermore, let $(f_n)_{n\in \mathbb{N}}$ be the sequence defined by 
\begin{align*}
  f_1:=4, \phantom{a} f_2:=2, \phantom{a}f_3:=3, \phantom{a} f_4:=1
\end{align*}
and $f_{i+4j}:=f_i$ for $1\leq i \leq 4$ and $j \in \mathbb{N}$. 
Then, we define the \emph{tetra-helix} $\mathcal{H}_k^{(a,b,c)}$ as the chain of wild tetrahedra consisting of $k$ tetrahedra that arises from the following tetra-symbol
\[
\left(\prod_{t=1}^{k}t_{f_t}\right)^{(a,b,c)}.
\]
\end{definition}

\begin{example}
Figure~\ref{fig:helix} shows the tetra-helices $\mathcal{H}_5^{(1,1,1)},\mathcal{H}_6^{(1,1,1)}$ and $\mathcal{H}_7^{(1,1,1)}$ that are obtained by the tetra-symbols
$
\left(1_{4}2_{2}3_{3}4_1\right)^{(1,1,1)}, \left(1_{4}2_{2}3_{3}4_15_4\right)^{(1,1,1)} \text{and} \;\left(1_{4}2_{2}3_{3}4_15_46_2\right)^{(1,1,1)},
$ respectively.
 \begin{figure}[H]
  \centering
  \begin{subfigure}{.3\textwidth}
  \centering
\includegraphics[height=3.75cm]{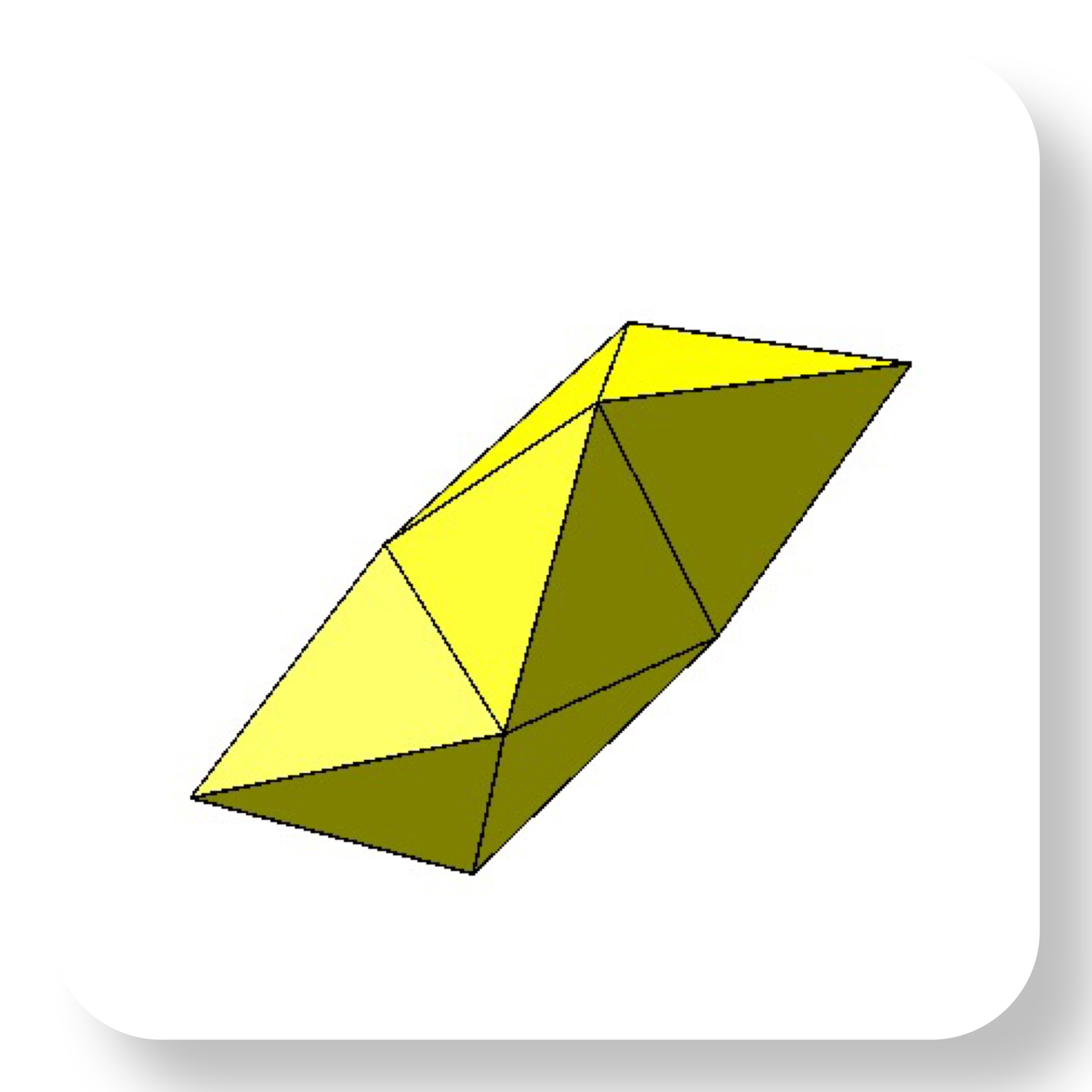}
  \caption{}
  \end{subfigure}
  \begin{subfigure}{.3\textwidth}
  \centering
\includegraphics[height=3.75cm]{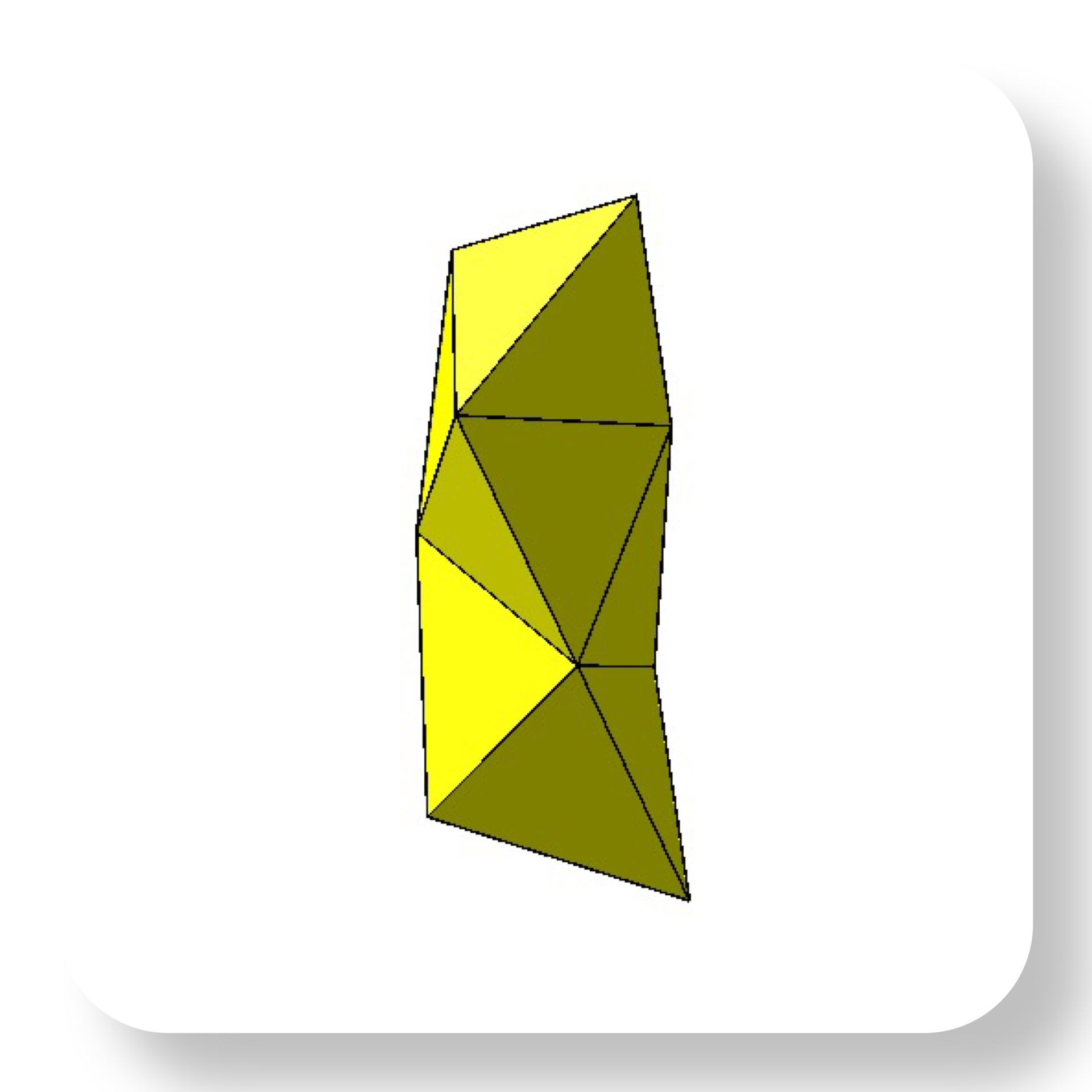}
\caption{}
  \end{subfigure}
  \begin{subfigure}{.3\textwidth}
 \centering
\includegraphics[height=3.75cm]{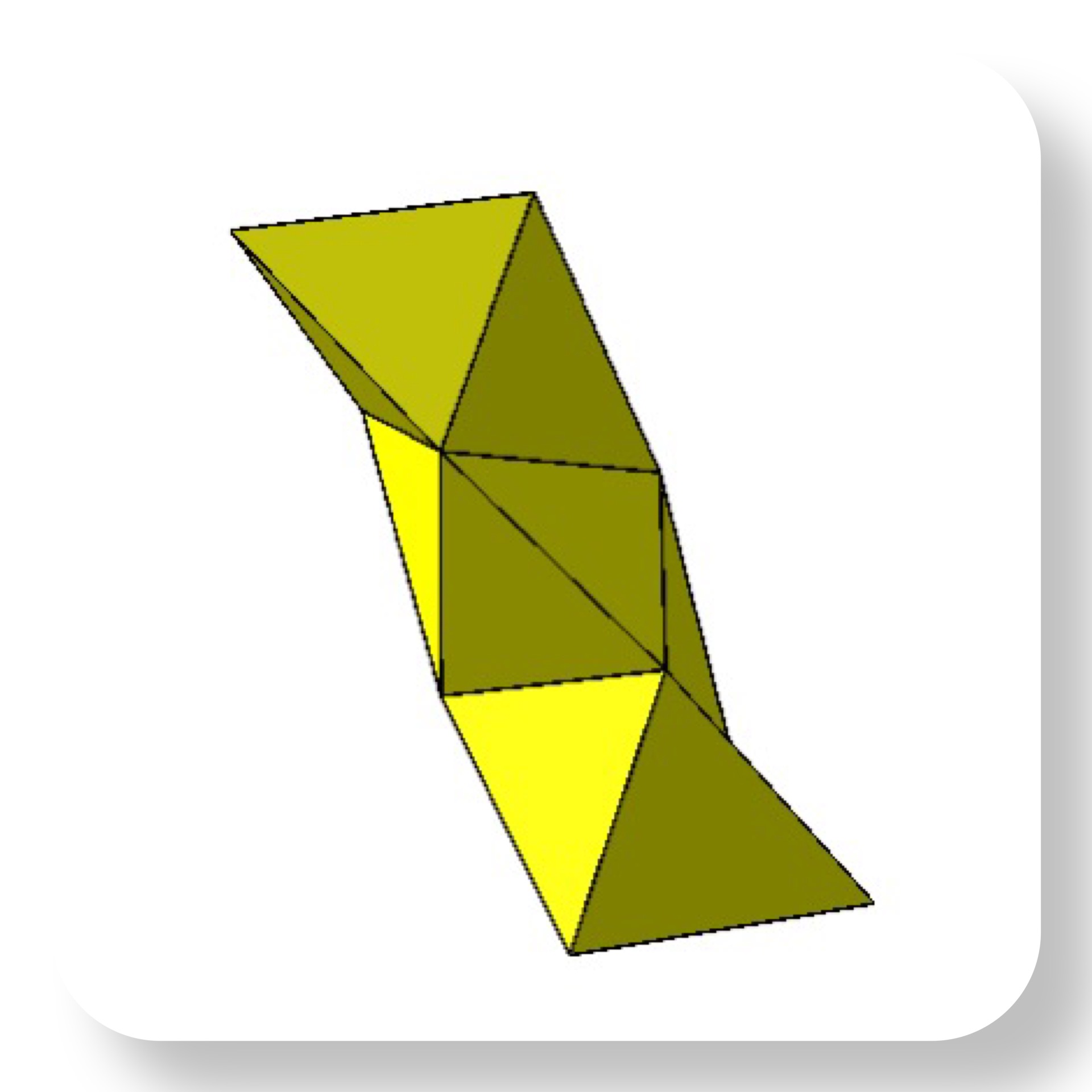}
\caption{}
  \end{subfigure}
  \caption{Different tetra-helices consisting of $5$ (a), $6$ (b) and $7$ (c) regular tetrahedra.}
  \label{fig:helix}
\end{figure} 
\end{example}

\begin{definition}
Let $(a,b,c)$ be a triple in $\Lambda.$ Furthermore, let $(t_n)_{n\in \mathbb{N}}$ be the sequence of natural numbers given by $t_n:= n+1$, and $(f_n)_{n\in \mathbb{N}}$ be another sequence defined by 
\begin{align*}
  f_1:=4, \phantom{a}f_2:=4, \phantom{a}f_3:=2, \phantom{a}f_4:=1, \phantom{a}f_5:=3, \phantom{a} f_6:=3, \phantom{a}f_7:=1, \phantom{a} f_8:=2
\end{align*}
and $f_{i+8j}=f_i,$ for $1\leq i \leq 8$ and $j \in \mathbb{N}.$ 
Then, we define the \emph{double tetra-helix} $\mathcal{D}_{2k+1}^{(a,b,c)}$ consisting of $2k+1$ tetrahedra by the following tetra-symbol
\[
\left(1_11_2\prod_{i=1}^{2(k-1)}{(t_i)}_{f_i} \right)^{(a,b,c)}.
\]
\end{definition}
In the following example, we illustrate different double tetra-helices and their corresponding tetra-symbol that arise from the above definition. 
\begin{example}
Let $\alpha,\beta,\gamma$ be triples that are given by $$(1, \tfrac{\sqrt{5}\sqrt{6}}{5}, \tfrac{3\sqrt{5}}{5}),\;(1, \tfrac{\sqrt{2706}\sqrt{418}}{902}, \tfrac{\sqrt{2706}}{41}),\;(1, \tfrac{\sqrt{3278}\sqrt{5478}}{3278}, \tfrac{3\sqrt{3278}}{149}),$$ respectively. Observe that the triples $\alpha,\beta,\gamma\in \Lambda.$ For $k=3,$ we obtain the double tetra-helix $\mathcal{D}_7^{\alpha}$ consisting of $7$ tetrahedra represented by the symbol 
$\left(1_11_22_43_44_25_1\right)^{\alpha}.$
Tetra-symbols of the double tetra-helices $\mathcal{D}_{11}^{\beta}$ and $\mathcal{D}_{15}^{\gamma}$ are given by 
\[
\left(1_11_22_43_4 4_2 5_1 6_3 7_3 8_1 9_2\right)^{\beta} \text{and} \; \; \left(1_11_22_43_4 4_2 5_1 6_3 7_3 8_1 9_2 10_4 11_4 12_2 13_1\right)^{\gamma},
\]
respectively. Visual representations of these examples are illustrated in Figure~\ref{fig:doublehelix}.

\begin{figure}[H]
\begin{subfigure}{.3\textwidth}
  \centering
\includegraphics[height=3.75cm]{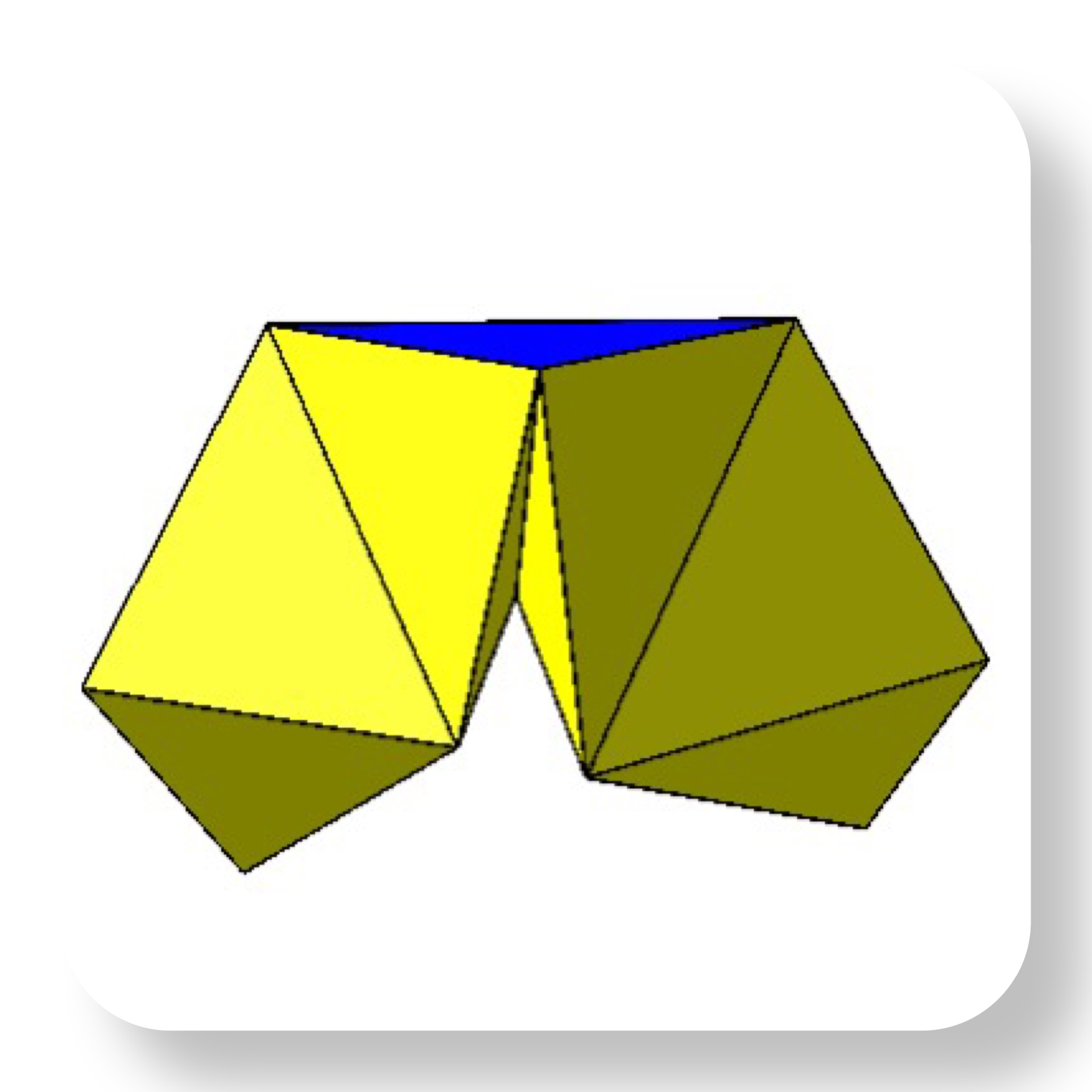}
\caption{}
\end{subfigure}
\begin{subfigure}{.3\textwidth}
  \centering
\includegraphics[height=3.75cm]{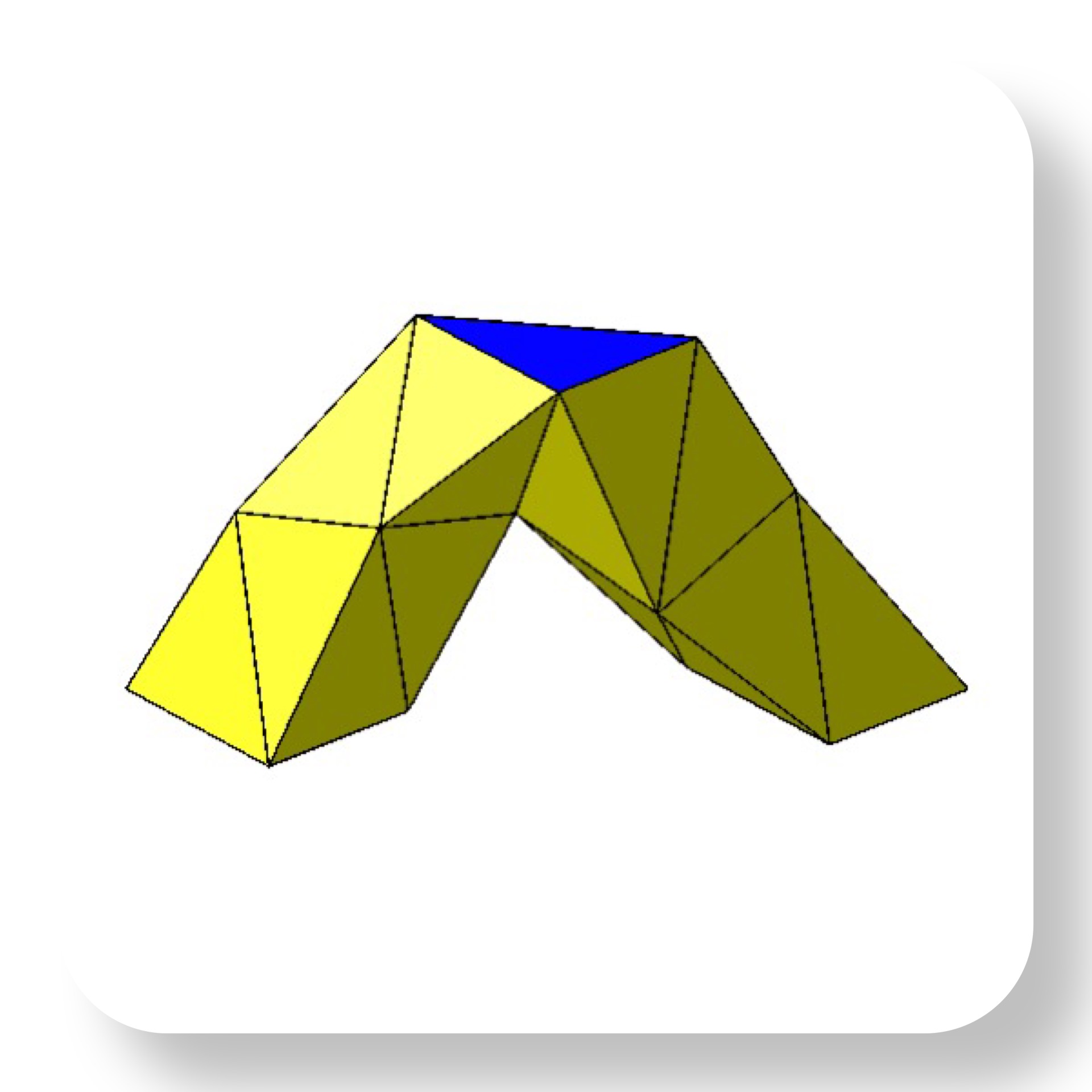}
\caption{}
\end{subfigure}
\begin{subfigure}{.3\textwidth}
  \centering
\includegraphics[height=3.75cm]{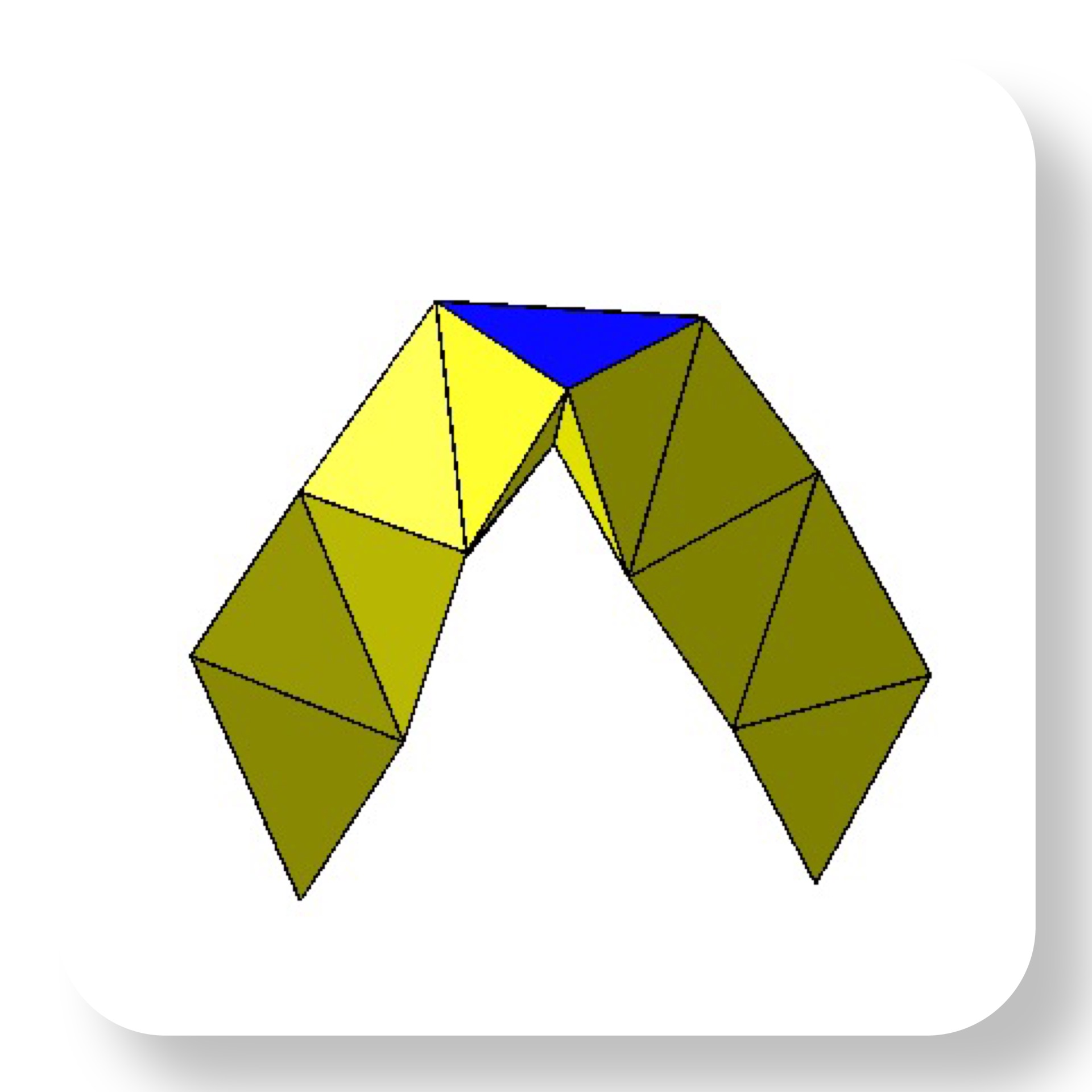}
\caption{}
\end{subfigure}
  \caption{Examples of double tetra-helices consisting of (a) $7$ tetrahedra with edge lengths $(1, \tfrac{\sqrt{5}\sqrt{6}}{5}, \tfrac{3\sqrt{5}}{5})$,\quad (b) $11$ tetrahedra with edge lengths $(1, \tfrac{\sqrt{2706}\sqrt{418}}{902}, \tfrac{\sqrt{2706}}{41})$ and \quad (c) $15$ tetrahedra with edge lengths $(1, \tfrac{\sqrt{3278}\sqrt{5478}}{3278}, \tfrac{3\sqrt{3278}}{149})$; The first tetrahedra of the different double tetra-helices are coloured in blue.}
  \label{fig:doublehelix}
\end{figure} 
\end{example}

\section{Relation between clusters of wild tetrahedra and simplicial surfaces}
\label{section:relation}

In this section, we establish a relationship between clusters of wild tetrahedra and simplicial surfaces which underpins our investigations. More precisely, we compute a simplicial surface that represents a partial combinatorial structure of a given cluster of wild tetrahedra. We shall refer to such a simplicial surface as a \emph{multi-tetrahedral surface}, see Definition~\ref{definition:relation}. Additionally, we elaborate on the advantages of using simplicial surfaces to describe clusters of wild tetrahedra. For instance, these multi-tetrahedral surfaces facilitate a complete description of the corresponding clusters of wild tetrahedra.
By exploiting the notion of the tetra-symbol, we assign a simplicial surface to a cluster of wild tetrahedra.
\begin{definition} \label{definition:relation}
  Let $\tau=(T_1,\ldots,T_n)$ be a cluster of wild tetrahedra and $\mathcal{V}_{\tau}=\{p_1,\ldots,p_m\}$ be the set of vertices of the given cluster. We recall from Definition~\ref{defintion:tetrahedron} that the sets of all edges and faces of $\tau$ are denoted by $\mathcal{E}_\tau$ and $\mathcal{F}_\tau$, respectively.
  For integers $1\leq i<j<k\leq m,$ we define the number $\eta_{i,j,k}$ as follows: 
  If $\conv(\{p_i,p_j,p_k\})\notin \mathcal{F}\tau,$ we define $\eta_{i,j,k}:=0.$ If $\conv(\{p_i,p_j,p_k\})$ forms a face in $\mathcal{F}_\tau,$ then we define $\eta_{i,j,k}$ as the number of tetrahedra containing this face as a subset. Note that $\eta_{i,j,k}\in \{0,1,2\}$ holds. If the set 
  \[
  X_\tau =\{1,\ldots,m\} \cup \{\{i,j\}\mid \conv(\{p_i,p_j\}) \in \mathcal{E}_{\tau}\} \cup \{\{i,j,k\}\mid \eta_{i,j,k}=1\} 
  \]
 satisfies the axioms of a simplicial surface, we call this surface a \emph{multi-tetrahedral surface}. Note that by excluding the $3$-subsets $\{i,j,k\}$ with $\eta_{i,j,k}=2,$ we remove the faces that share two tetrahedra in a cluster of wild tetrahedra. We call $X_{\tau}$ a \emph{multi-tetrahedral sphere}, if the simplicial surface $X_{\tau}$ satisfies $\chi(X_\tau)=2.$ We say that the multi-tetrahedral surface $X_{\tau}$ is \emph{proper} if the cluster $\tau$ is a chain. We refer to the surface $X_{\tau}$ as a \emph{multi-tetrahedral torus} if the surface satisfies $\chi(X_{\tau})=0.$
\end{definition}
For instance, with the above remark, we can exploit the tetra-helices and double tetra-helices defined in Section~\ref{subsection:chainsoftetra} to obtain proper multi-tetrahedral spheres with corresponding embeddings. 
 Different clusters of wild tetrahedra can give rise to isomorphic multi-tetrahedral surfaces, see Definition~\ref{def:isomorphic}.  Furthermore, if the edge lengths of the wild tetrahedra of the given cluster differ, then the edge lengths induce a wild-colouring of the corresponding multi-tetrahedral surface.
 \begin{remark}
 It can be shown that up to the permutation of colours, if a wild-colouring of a multi-tetrahedral surface exists, then it is unique, see \cite{fowler}. Furthermore, we know that for a given multi-tetrahedral sphere, there exists exactly one wild-colouring. 
 \end{remark}
In this paper, multi-tetrahedral surfaces are defined as the partial incidence geometries of clusters of wild tetrahedra. However, in \cite{simplicialsurfacebook} an alternative approach to define multi-tetrahedral spheres is used. In that context, multi-tetrahedral spheres are defined by iteratively attaching tetrahedra to a given simplicial tetrahedron without removing faces which occur in two tetrahedra.
For more information on multi-tetrahedral spheres, we refer to \cite{maRey,simplicialsurfacebook}.

\section{Constructing embeddings of multi-tetrahedral surfaces}
\label{section:embedcacti}

In this section, we study the embeddings of the wild-coloured simplicial tetrahedra that result in wild tetrahedra. Investigations dealing with the existence of wild tetrahedra in $\R^3$ satisfying certain edge length restrictions have been addressed in different studies. For instance, in \cite{acuteangleold} and \cite{menger} the authors establish that a wild tetrahedron with non-co-planar vertices has acute-angled triangles as faces.
Additionally, in \cite{ansgar} Strzelczyk shows that up to isometries there exists exactly one wild tetrahedron with edge lengths $(a,b,c)$ if and only if $(a,b,c)$ gives rise to a non-obtuse-angled triangle.
As described above, these publications focus on the existence and uniqueness of tetrahedra with respect to their edge lengths. Here, we describe a wild tetrahedron by providing explicit coordinates for the vertices of the given tetrahedron. These coordinates facilitate an easy description of the wild tetrahedra and thereby enable us to conduct more efficient computations in Maple.
The following theorem presents the described embedding of the wild-coloured simplicial tetrahedron.

\begin{theorem}
\label{lemma:embbedingtetrahedron}
Let $\mathcal{T}$ be a wild-coloured simplicial tetrahedron and $(1,b,c)\in \Lambda$ be edge lengths such that the resulting triangle is acute- or right-angled. This means that the following inequalities are satisfied:
\[
1+b^2\geq c^2, \phantom{a} b^2+c^2\geq 1, \phantom{a} 1+c^2\geq b^2.
\]
Then there exist $x,h \in [0,\infty)$ with $(x,h)\neq(0,0)$ such that the map $\phi:\mathcal{T}_0\to \mathbb{R}^3$ given by
\[
\bigg[\phi(v_1),\phi(v_2),\phi(v_3),\phi(v_4)\bigg]\
=\bigg[\Big(\tfrac{1}{2}, 0, 0\Big)^t,\Big(-\tfrac{1}{2}, 0, 0\Big)^t, \Big(\tfrac{x^2 - 1}{2(x^2 + 1)}, \tfrac{x}{x^2 + 1}, h\Big)^t, \Big(\tfrac{-x^2 + 1}{2(x^2 + 1)}, \tfrac{-x}{x^2 + 1}, h\Big)^t\bigg],
\]
 forms a strong $(1,b,c)$-embedding of $\mathcal{T}.$ 
\end{theorem}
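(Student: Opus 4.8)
The plan is to verify directly that the four points $\phi(v_1),\dots,\phi(v_4)$ realize the required edge lengths for a suitable choice of $(x,h)$, and that these constraints can always be met under the acute/right-angled hypothesis. First I would record which edges of $\mathcal{T}$ receive which colour. In the wild-coloured simplicial tetrahedron the four faces are $\{v_1,v_2,v_3\},\{v_1,v_2,v_4\},\{v_1,v_3,v_4\},\{v_2,v_3,v_4\}$, and opposite edges must share a colour (this is forced, up to permutation of colours, by the bijectivity of $\omega$ on each face): so $\{v_1,v_2\}$ and $\{v_3,v_4\}$ get one colour, $\{v_1,v_3\}$ and $\{v_2,v_4\}$ a second, $\{v_1,v_4\}$ and $\{v_2,v_3\}$ the third. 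The coordinates are chosen so that $\|\phi(v_1)-\phi(v_2)\| = 1$ automatically, and by the symmetry $\phi(v_3),\phi(v_4)$ are related by the point reflection through the origin, one computes $\|\phi(v_3)-\phi(v_4)\|$ and checks it also equals $1$ — this pins down the colour-$1$ edges and explains the particular rational parametrization $\big(\tfrac{x^2-1}{2(x^2+1)},\tfrac{x}{x^2+1},h\big)$, whose first two coordinates lie on the circle of radius $\tfrac12$ about the origin in the plane $z=h$, so that $\phi(v_3)$ and $\phi(v_4)$ are antipodal on that circle and hence at distance $1$.

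Next I would impose the remaining two length conditions. Writing $p_3 = \phi(v_3)$, one needs $\|p_3 - \phi(v_1)\|$ and $\|p_3 - \phi(v_2)\|$ to be $b$ and $c$ (in some order); by the antipodal symmetry the distances from $\phi(v_4)$ to $\phi(v_2)$ and $\phi(v_1)$ are then automatically the same two values, so all six edge-length equations reduce to exactly two scalar equations in the two unknowns $x,h$. Expanding, $\|p_3-\phi(v_1)\|^2$ and $\|p_3-\phi(v_2)\|^2$ are each of the form (rational function of $x$) $+\, h^2$; subtracting the two kills $h^2$ and leaves a single equation that determines $x$ (equivalently, $x$ as a function of $b,c$), and then adding them — or substituting back — gives $h^2$ as an explicit expression in $b,c$. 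The content of the theorem is that this value of $h^2$ is nonnegative and that $(x,h)\neq(0,0)$: I would show $h^2 \geq 0$ is equivalent to the triangle with sides $(1,b,c)$ being non-obtuse, using the three stated inequalities $1+b^2\geq c^2$, $b^2+c^2\geq 1$, $1+c^2\geq b^2$, and I would note that $(x,h)=(0,0)$ would force a degenerate (coplanar) configuration, excluded because a genuine triangle has positive area. Taking nonnegative square roots then yields $x,h\in[0,\infty)$ as claimed, and injectivity of $\phi$ (the "strong" part) follows since the four points are then affinely independent — their pairwise distances being positive and satisfying the Cayley–Menger condition for a nondegenerate tetrahedron, which is exactly $h>0$ together with the triangle inequalities on $(1,b,c)$.

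The main obstacle I anticipate is purely the bookkeeping in the second step: one must be careful about the two-fold ambiguity (which of $b,c$ is $\|p_3-\phi(v_1)\|$) and make sure the resulting $x$ is real and the formula for $h^2$ simplifies to something whose sign is transparently governed by the non-obtuseness inequalities. Concretely, after clearing the denominator $2(x^2+1)$ the equation for $x$ should become a quadratic (or linear, after cancellation) whose solvability over $\mathbb{R}$ needs checking, and then $h^2$ should come out — after simplification — as a product or sum of the three quantities $1+b^2-c^2$, $b^2+c^2-1$, $1+c^2-b^2$ divided by a manifestly positive quantity, making the equivalence with the hypothesis immediate. I would handle the boundary case (right-angled triangle, where one of the three inequalities is an equality) by observing it corresponds to $h=0$ but $x\neq 0$, so the conclusion $(x,h)\neq(0,0)$ still holds and the embedding, while giving a flat "tetrahedron", is still a well-defined weak embedding; if a strictly nondegenerate tetrahedron is wanted one restricts to the acute case, but the statement as written only asks for a strong embedding, i.e. injectivity, which persists as long as the four planar points in the degenerate case remain distinct — and they do, since all edge lengths are positive.
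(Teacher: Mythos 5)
Your proposal is correct and follows essentially the same route as the paper's proof: place $\phi(v_3),\phi(v_4)$ as antipodal points on the radius-$\tfrac12$ circle at height $h$, use the half-turn symmetry about the $z$-axis to collapse the six distance equations to two, then eliminate $h^2$ to solve for the angle (equivalently $x$) and back-substitute for $h$, with the non-obtuseness inequalities guaranteeing real solutions. Your extra care about injectivity and the right-angled ($h=0$) boundary case is a point the paper's proof leaves implicit, but the underlying argument is the same.
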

\begin{proof}
Here, we prove that there exist $x$ and $h$ such that the polyhedron $\mathcal{P}$ resulting from $\phi$ is a wild tetrahedron consisting of faces with edge lengths $(1,b,c).$
Since \[\sqrt{(\tfrac{x^2 - 1}{2(x^2 + 1)})^2+(\tfrac{x}{x^2 + 1})^2}=\sqrt{(\tfrac{-x^2 + 1}{2(x^2 + 1)})^2+(\tfrac{-x}{x^2 + 1})^2}=\tfrac{1}{2},\] the vectors $(\tfrac{x^2 - 1}{2(x^2 + 1)}, \tfrac{x}{x^2 + 1})^t$ and $(\tfrac{-x^2 + 1}{2(x^2 + 1)}, \tfrac{-x}{x^2 + 1})^t$ lie on a circle around the origin with radius $\tfrac{1}{2}$ in $\mathbb{R}^2$. Hence, there exists a unique $\alpha \in [0,2\pi)$ such that the following equality holds: 
\[
\big[\phi(v_3),\phi(v_4)\big]
=\big[ \Big(\tfrac{1}{2}\cos(\alpha),\tfrac{1}{2}\sin(\alpha), h\Big)^t, \Big(-\tfrac{1}{2}\cos(\alpha),-\tfrac{1}{2}\sin(\alpha), h\Big)^t\big].
\]
Thus, it is sufficient to show that we can define $\alpha$ and $h$ such that $\phi$ is a strong $(1,b,c)$-embedding. For this, we make use of the wild-colouring of the simplicial tetrahedron illustrated in Figure~\ref{fig:wildcoloured_tetrahedra}. We recall that the edges coloured $1,2,3$ with respect to the illustrated wild-colouring are visualised with the colours red, green and blue, respectively. Thus, we need to find $\alpha\in [0,2\pi)$ and $h \in [0,\infty)$ such that
\begin{align*}
  \Vert \phi(v_1)-\phi(v_2)\Vert=1, \phantom{aa} \Vert \phi(v_3)-\phi(v_4)\Vert=1,\\
  \Vert \phi(v_1)-\phi(v_3)\Vert=b, \phantom{aa} \Vert \phi(v_2)-\phi(v_4)\Vert=b,\\
  \Vert \phi(v_1)-\phi(v_4)\Vert=c, \phantom{aa} \Vert \phi(v_2)-\phi(v_3)\Vert=c  \phantom{|}.
\end{align*}
This instance is illustrated in Figure~\ref{fig:circles}.
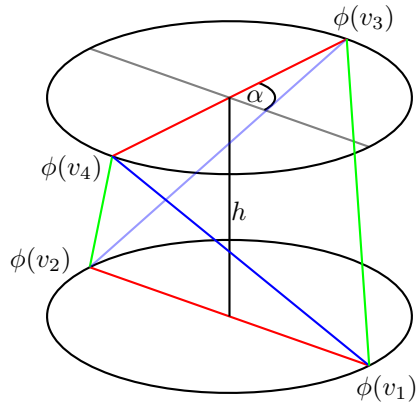
\begin{figure}[H]
    \centering
    

\begin{tikzpicture}
    \tdplotsetmaincoords{65}{40}
    \begin{tikzpicture}[xscale=0.8,yscale=0.8,tdplot_main_coords]
    \def\radius{3}

    \tdplotdrawarc[thick]{(0,0,0)}{\radius}{0}{360}{}{}

    \draw[thick, red] (-\radius,0,0) -- (\radius,0,0);
    
    \node at (3.8,-0.5,0) {$\phi(v_1)$};
    \node at (-3.8,-0.3,0) {$\phi(v_2)$};

    \tdplotdrawarc[thick]{(0,0,4)}{\radius}{0}{360}{}{}
  \tdplotdrawarc[thick]{(0,0,4)}{0.25*\radius}{0}{90}{}{}
    \draw[thick, red] (0,-\radius,4) -- (0,\radius,4);
        \draw[thick,opacity=0.5] (-\radius,0,4) -- (\radius,0,4);
    \node at (-0.3,3.8,4) {$\phi(v_3)$};
    \node at (-0.2,-3.8,4) {$\phi(v_4)$};

    \draw[thick, black] (0,0,0) -- (0,0,4) ;
    \node at (0.2,0,2) {$h$};
        \node at (0.3,0.3,4) {$\alpha$};
    \draw[thick, green] (-3,0,0) -- (0,-3,4) ;
    \draw[thick, blue] (3,0,0) -- (0,-3,4) ;
    \draw[thick, blue,opacity=0.4] (-3,0,0) -- (0,3,4) ;
    \draw[thick, green] (3,0,0) -- (0,3,4) ;
\node at (0,0,7.5) {$ $};
\node at (7,7,0) {$ $};
    \end{tikzpicture}
\end{tikzpicture}
    \vspace{-1cm}
    \caption{The polyhedron that results from the convex hull $\conv(\{\phi(v_1),\phi(v_2),\phi(v_3),\phi(v_4)\})$ with edges coloured in red, green and blue}
    \label{fig:circles}
\end{figure}
It is easy to verify that the edges $\conv(\{\phi(v_1),\phi(v_2)\})$ and $\conv(\{\phi(v_3),\phi(v_4)\})$ of $\mathcal{P}$ have length $1$. 
Additionally, we observe that the edges $\conv(\{\phi(v_2),\phi(v_3)\})$ and $\conv(\{\phi(v_2),\phi(v_4)\})$ arise from the edges $\conv(\{\phi(v_1),\phi(v_4)\})$ and $\conv(\{\phi(v_1),\phi(v_3)\})$ by a $180$ degree rotation around the vector $(0,0,1)^t$, respectively, i.e.\ by using the rotation-matrix
\begin{align*}
    \left(\begin{tabular}{ccc}
         -1&0&0  \\
         0&-1&0  \\
         0&0&1  \\
    \end{tabular}\right)
\end{align*}
to rotate the corresponding edges of $\mathcal{T}$. Since the Euclidean norm is invariant under isometries of the Euclidean $3$-space, it suffices to show that there exist $\alpha$ and $h$ such that
\[
\Vert \phi(v_1)-\phi(v_3)\Vert=b, \phantom{aa} \Vert \phi(v_1)-\phi(v_4)\Vert=c.
\]
Simplifying both equalities leads to 
\begin{align*}
  h=\sqrt{b^2-\tfrac{1}{2}+\tfrac{1}{2}\cos(\alpha)}, \phantom{aa}
  h=\sqrt{c^2-\tfrac{1}{2}-\tfrac{1}{2}\cos(\alpha)}.
\end{align*}
Thus, we conclude 
\[
\sqrt{b^2-\tfrac{1}{2}+\tfrac{1}{2}\cos(\alpha)}=\sqrt{c^2-\tfrac{1}{2}-\tfrac{1}{2}\cos(\alpha)}
\iff c^2-b^2=\cos(\alpha).
\]

Since $-1\leq c^2-b^2 \leq 1$, the last equation has a solution in $\alpha.$ Hence, we obtain an identity for $h$ and the result follows.
\end{proof}

The above theorem yields a wild tetrahedron whose faces consists of triangles with edge lengths 
\begin{align}
\label{lengthtetra}
\biggl(1,\sqrt{\tfrac{h^2x^2+x ^2+1}{x^2+1}},\sqrt{\tfrac{h^2x^2+x ^2+h^2}{x^2+1}}\biggr)=:\ell(x,h).
\end{align}
Moreover, we can use this theorem to describe an arbitrary wild tetrahedron with edge lengths $(a,b,c)\in \Lambda.$ This can be achieved by constructing the wild tetrahedron with edge lengths $(1,\tfrac{b}{a},\tfrac{c}{a})$ and then scaling the constructed tetrahedron to obtain the desired polyhedron.

Next, we describe a procedure to embed a wild-coloured multi-tetrahedral surface into $\R^3$. 
This task of computing embeddings of such surfaces turns out to be easier than the general embedding problem for simplicial surfaces. This is due to the fact that wild-coloured multi-tetrahedral surfaces can be embedded into $\R^3$ by considering only linear equations.

\begin{remark}
\label{remark:embVertex}
  Let $X$ be a wild-coloured multi-tetrahedral surface, $\phi$ a weak $(a,b,c)$-embedding of $X$ and $f=\{v_1,v_2,v_3\}$ a face of $X$. Furthermore, let $v\in X_0$ be the unique vertex that is adjacent to $v_1,v_2$ and $v_3,$ and $x_v\in \mathbb{R}^3$ the coordinate that is generated by reflecting the coordinate $\phi(v)$ through the plane containing $\phi(v_1),\phi(v_2),\phi(v_3).$
  Then, two weak $(a,b,c)$-embeddings $\phi_1,\phi_2$ of the surface 
  $Y=T^f(X)$, obtained by attaching a tetrahedron to the face $f$, are given by the following maps:
\begin{align*}
  \phi_1:Y_0 \to \mathbb{R}^3, w\mapsto 
  \begin{cases}
    \phi(w), & \text{ if }w\in X_0 \subset Y_0\\
    \phi(v), & \text{ otherwise }
  \end{cases},\\
   \phi_2:Y_0 \to \mathbb{R}^3, w\mapsto 
  \begin{cases}
    \phi(w), & \text{ if }w\in X_0 \subset Y_0\\
    x_v, & \text{ otherwise }
  \end{cases}.
\end{align*}
Since the equality $\phi_1(v')=\phi_1(v)$ holds for $v'\in Y_0 \setminus X_0$, the map $\phi_1$ is not a strong $(a,b,c)$-embedding of $Y.$  
Thus, the wild-coloured multi-tetrahedral surface $Y$ has exactly $2^k$ weak $(a,b,c)$-embeddings, where $k$ is the number of tetrahedra of the cluster corresponding to $Y$.
Whether $\phi_2$ gives rise to a strong $(a,b,c)$-embedding of $Y$ depends on the injectivity of the map $\phi_2$.
\end{remark}
Thus, we can embed a wild-coloured multi-tetrahedral surface $X$ into $\R^3$ by applying tetrahedral extensions to the wild-coloured simplicial tetrahedron and using the above remark in every step of the process.
For simplicity, we refer to a polyhedron that arises from a strong $(a,b,c)$-embedding $\phi$ of $X$ that is constructed as described above, by $X^{\phi}_{(a,b,c)}$. 
\begin{remark}
\label{remark:obtuse}
Since a wild-coloured multi-tetrahedral sphere $X$ can be constructed by applying tetrahedral extensions to the wild-coloured simplicial tetrahedron, we know that $X$ has a weak $(a,b,c)$-embedding if and only if the wild coloured simplicial tetrahedron has a weak $(a,b,c)$-embedding. This is exactly the case if 
the triangle with edge lengths $(a,b,c)$ is right- or acute-angled, see \cite{ansgar}.
\end{remark}
Further, we construct all strong embeddings of the wild-coloured simplicial double-tetrahedron by using the method introduced in Remark~\ref{remark:embVertex}.

\begin{example}
\label{example:doubletetra}
Let $X$ be the wild-coloured simplicial tetrahedron given by the faces
\[
\{\{v_1,v_2,v_3\},\{v_1,v_2,v_4\},\{v_1,v_3,v_4\},\{v_2,v_3,v_4\}\}.
\]
As established in Theorem~$\ref{lemma:embbedingtetrahedron},$ the map $\phi: X_0\to \mathbb{R}^3,$ where the images of the vertices are given by 
\[
\bigg[\phi(v_1),\phi(v_2),\phi(v_3),\phi(v_4)\bigg]\
=\bigg[\Big(\tfrac{1}{2}, 0, 0\Big)^t,\Big(-\tfrac{1}{2}, 0, 0\Big)^t, \Big(\tfrac{x^2 - 1}{2(x^2 + 1)}, \tfrac{x}{x^2 + 1}, h\Big)^t, \Big(\tfrac{-x^2 + 1}{2(x^2 + 1)}, \tfrac{-x}{x^2 + 1}, h\Big)^t\bigg],
\]
forms a strong $\ell(x,h)$-embedding of $X,$ see Equation (\ref{lengthtetra}).
Attaching a tetrahedron to the face $f=\{v_1,v_2,v_3\}$ of $X$ results in the wild-coloured simplicial double-tetrahedron $Y=T^f(X)$ whose faces are determined by
\[
\{\{v_1,v_2,v_4\},\{v_1,v_3,v_4\},\{v_2,v_3,v_4\},\{v_1,v_2,v_5\},\{v_2,v_3,v_5\},\{v_1,v_3,v_5\}\}.
\]
Thus, reflecting $\phi(v_4)$ through the plane that contains $\phi(v_1),\phi(v_2),\phi(v_3),$ as described in Remark~\ref{remark:embVertex}, leads to the following three dimensional coordinate:

\begin{align*}
  x_{v_4}:=\left(\tfrac{-x^2+1}{2},\tfrac{ 4h^2x(x^2 + 1)}{h^2(x^2+1)^2 + x^2 - \tfrac{x}{x^2 + 1}},\tfrac{-4hx^2}{h^2(x^2+1)^2 + x^2 + h}\right)^t.
\end{align*}
 Thus, we obtain a weak $\ell(x,h)$-embedding $\psi:Y_0\to \mathbb{R}^3$ that is determined by $\psi(v_i):=\phi(v_i)$ for $1\leq i \leq 4$ and $\psi(v_5):=x_{v_4}.$ 
With Maple we have been able to verify that  the map $\psi$ is injective for all $x,h>0$ and therefore forms a strong $\ell(x,h)$-embedding of $Y$ for $x,h>0$.
\end{example}
 A question that arises in this context is whether a wild-coloured multi-tetrahedral sphere can have more than one strong $(a,b,c)$-embedding. We address this question in the following lemma.
\begin{lemma}
\label{lemma:1embedding_1colouring}
Let $X$ be a wild-coloured multi-tetrahedral sphere. Then, up to isometries, there exists at most one strong $(a,b,c)$-embedding of $X$ into $\R^3.$
\end{lemma}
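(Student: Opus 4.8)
The plan is to induct on the number $n$ of tetrahedra of the cluster underlying $X$, exploiting the recursive description of multi-tetrahedral spheres: by \cite{simplicialsurfacebook,maRey} every such $X$ arises from the wild-coloured simplicial tetrahedron $\mathcal{T}$ by $n-1$ tetrahedral extensions (Definition~\ref{def:extension}), and conversely a tetrahedral reduction (Definition~\ref{def:reduction}) at a degree-$3$ vertex turns $X$ back into a multi-tetrahedral sphere with one fewer tetrahedron. For the base case $n=1$ we have $X=\mathcal{T}$, and there is at most one strong $(a,b,c)$-embedding up to isometry because a tetrahedron --- indeed any configuration of at most four points in $\R^3$ --- is determined up to congruence by its matrix of pairwise distances; this is also the uniqueness of a wild tetrahedron with prescribed edge lengths, cf.\ \cite{ansgar}.

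For the inductive step, write $X=T^f(X')$ where $X'$ is a wild-coloured multi-tetrahedral sphere with $n-1\ge 1$ tetrahedra, $f=\{v_1,v_2,v_3\}\in X'_2$, and let $v\notin X'_0$ be the vertex introduced by the extension. Let $\phi$ and $\psi$ be two strong $(a,b,c)$-embeddings of $X$. I would first note that their restrictions to $X'_0$ are again strong $(a,b,c)$-embeddings of $X'$: injectivity is inherited, and the edges of $X'$ are edges of $X$ carrying the same wild-colouring, since a tetrahedral extension deletes a face but no edge. By the inductive hypothesis there is an isometry $g$ of $\R^3$ with $g\circ(\phi|_{X'_0})=\psi|_{X'_0}$; replacing $\phi$ by $g\circ\phi$, which is again a strong $(a,b,c)$-embedding of $X$, we may assume $\phi$ and $\psi$ restrict to one and the same map $\phi_0$ on $X'_0$.

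It then remains only to determine $\phi(v)$ and $\psi(v)$. By Remark~\ref{remark:embVertex}, any weak $(a,b,c)$-embedding of $X=T^f(X')$ that restricts to $\phi_0$ on $X'_0$ sends $v$ to one of exactly two points: namely $\phi_0(u)$, where $u\in X'_0$ is the unique vertex of $X'$ adjacent to $v_1,v_2,v_3$, or the reflection $x_u$ of $\phi_0(u)$ in the plane through $\phi_0(v_1),\phi_0(v_2),\phi_0(v_3)$ --- these are the maps $\phi_1$ and $\phi_2$ of that remark, and the two-point description rests on the fact that in a wild-coloured simplicial tetrahedron opposite edges carry the same colour. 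The key observation is that the first option $\phi(v)=\phi_0(u)$ would force $\phi(v)=\phi(u)$ with $v\ne u$, contradicting injectivity of $\phi$; hence $\phi(v)=x_u$, and the same argument gives $\psi(v)=x_u$. Thus $\phi=\psi$ on all of $X_0$, which means the two embeddings we started with differed by the isometry $g$, completing the induction.

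The conceptual content is exactly that a \emph{strong} embedding can only ever use the ``reflection'' branch $\phi_2$ of Remark~\ref{remark:embVertex}, never the ``duplicate'' branch $\phi_1$, and $\phi_2$ is uniquely determined at each extension step. What needs care rather than genuine ingenuity is the bookkeeping around the recursion --- that tetrahedral reduction stays within the class of multi-tetrahedral spheres and terminates at $\mathcal{T}$, for which I would appeal to \cite{simplicialsurfacebook,maRey} --- and the degenerate edge-length case: if the triangle with side lengths $(a,b,c)$ has zero area, then $\phi_0(v_1),\phi_0(v_2),\phi_0(v_3)$ need not be affinely independent and the ``exactly two points'' statement has to be re-examined; however, in that situation one expects no strong embedding of a multi-tetrahedral sphere with $n\ge 2$ tetrahedra to exist, so the claim is then vacuous.
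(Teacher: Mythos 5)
Your proof is correct and takes essentially the same route as the paper's: induction on the number of tetrahedra of the underlying cluster, restriction of the two strong embeddings to the sub-sphere obtained by removing the last tetrahedron, an appeal to the inductive hypothesis to align them by an isometry, and then Remark~\ref{remark:embVertex} to see that the new vertex has only two candidate images, of which only the reflected one is compatible with injectivity. The paper leaves that last step implicit (``the result follows with Remark~\ref{remark:embVertex}'') and handles the degenerate edge-length issue up front via Remark~\ref{remark:obtuse}, whereas you spell both out explicitly, but the argument is the same.
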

\begin{proof}
Since weak $(a,b,c)$-embeddings of wild-coloured multi-tetrahedral spheres do not exist if and only if the triangle with edge lengths $(a,b,c)$ is obtuse-angled (see Remark~\ref{remark:obtuse}), we only consider the case where $(a,b,c)$ form the edge lengths of a right- or acute-angled triangle. Thus, from Remark~\ref{remark:embVertex} we know that there exists at least one weak $(a,b,c)$-embedding of a wild-coloured multi-tetrahedral sphere.
Hence, we have to show that at most one of these weak $(a,b,c)$-embeddings can be a strong $(a,b,c)$-embedding of $X$. Let therefore $\tau=(T_1,\ldots,T_k)$ be a cluster of wild tetrahedra that corresponds to $X$, see Definition~\ref{definition:relation}. We prove the statement by induction over $k.$ If $k=1,$ then $X$ is isomorphic to the wild-coloured simplicial tetrahedron and up to isometry, the uniqueness of the corresponding strong $(a,b,c)$-embedding is established in Theorem~\ref{lemma:embbedingtetrahedron}. Next, let the statement hold for an arbitrary $k\geq 1.$
Then, $\tau'=(T_1,\ldots,T_{k-1})$ is another cluster of wild tetrahedra that gives rise to a corresponding wild-coloured simplicial sphere $X_{\tau '}.$ For simplicity, we assume $\mathcal{V}_{\tau'}\subset \mathcal{V}_{\tau}$ and $\mathcal{E}_{\tau'}\subset \mathcal{E}_{\tau}.$ 
If there exist two strong $(a,b,c)$-embeddings $\phi_1$ and $\phi_2$ of $X_\tau,$ then the restrictions of $\phi_1$ and $\phi_2$ to the vertices of ${X_{\tau'}},$ namely $\psi_1$ and $\psi_2,$ form strong $(a,b,c)$-embeddings of $X_{\tau'},$ respectively. Since there exists at most one strong $(a,b,c)$-embedding of $X_{\tau'},$ we know that up to isometry, $X_{\tau'}^{\psi_1}=X_{\tau'}^{\psi_2}$ holds. Thus, the result follows with Remark~\ref{remark:embVertex}.
\end{proof}
Next, we present another example of a strong embedding of a wild-coloured multi-tetrahedral surface that plays an important role in proving the existence of an infinite family of multi-tetrahedral tori with corresponding strong embeddings in Section~\ref{section:infinitefamily}. 

\begin{construction}
\label{example:helix6}

Our aim is to define a proper wild-coloured multi-tetrahedral sphere $X$ and construct a corresponding embedding such that two faces of $X$ being incident to vertices of degree $3$ are mapped onto triangles that lie on parallel planes in $\mathbb{R}^3$. This multi-tetrahedral sphere is then used in Theorem~\ref{thm:infinitefamily} to establish the existence of infinitely many multi-tetrahedral tori with strong embeddings.
In particular, we define this multi-tetrahedral sphere $X$ by the following faces:
\begin{align*}
  \{&\{v_1,v_2,v_3\},\{v_1, v_2, v_4\}, \{v_1, v_3, v_4\}, \{v_3, v_4, v_6\}, \{v_3, v_5, v_6\}, \{v_2, v_3, v_5\}, \{v_2, v_4, v_5\}\\
  &\{v_4, v_5, v_7\}, \{v_4, v_6, v_7\}, \{v_6, v_7, v_9\}, \{v_6, v_8, v_9\}, \{v_5, v_6, v_8\}, \{v_5, v_7, v_8\}, \{v_7, v_8, v_9\}\}.
\end{align*}
Note, $X$ is a multi-tetrahedral sphere that describes a tetra-helix consisting of $6$ wild tetrahedra.
Using Theorem~\ref{lemma:embbedingtetrahedron} and Remark~\ref{remark:embVertex} we can construct a desired strong embedding of $X$ 
such that the faces $\{v_1, v_2,v_3\}$ and $\{v_7,v_8,v_9\}$ yield triangles in $\R^3$ that lie on parallel planes. This strong embedding $\phi$ is constructed as follows:

Let $p(x)=x^4-10x^3-5x^2+6x+1$ be a real polynomial. By intermediate value theorem, there exists $r_1\in [\tfrac{175013}{262144}, \tfrac{21877}{32768}]$ satisfying $p(r_1)=0.$ Moreover, let $q(x)=x^2-r_1$ be a real polynomial. Again by the intermediate value theorem, there exists a real root $r_2$ of $q(x)$ such that $r_2\in [\tfrac{13387}{16384}, \tfrac{6695}{8192}].$
With this information, we obtain a strong $\ell(r_2,1)$-embedding $\phi$ of $X$, see Equation~\ref{lengthtetra}. 
Note that $\conv(\{\phi(v_1), \phi(v_2),\phi(v_3)\})$ and $\conv(\{\phi(v_7),\phi(v_8),\phi(v_9)\})$ indeed lie on parallel planes. For illustration, we give approximate values of the edge lengths $\ell(r_2,1)$ of $X^{\phi}_{\ell(r_2,1)}$, namely 
\begin{align*}
\ell(r_2,1)\approx (1,1.183362177,1.264774172),
\end{align*}
 and approximate values of the coordinates contained in $\phi(X_0).$ The values of the coordinates are given by 
\begin{align*}
[&\phi(v_1),\ldots,\phi(v_9)]\approx
  [(0.5, 0, 0)^t, (-0.5, 0, 0)^t, (-0.099, 0.49, 1)^t,  (0.1, -0.49, 1)^t, (-1.05, -0.36, 0.77)^t, \\&(-0.85, -0.07, 1.72)^t,(-0.45, -1.16, 1.5)^t, (-1.45, -1.16, 1.5)^t, (-1.05, -0.67, 2.5)^t].
\end{align*}
\end{construction}
\begin{figure}[H]
  \centering  
  \includegraphics[height=5cm]{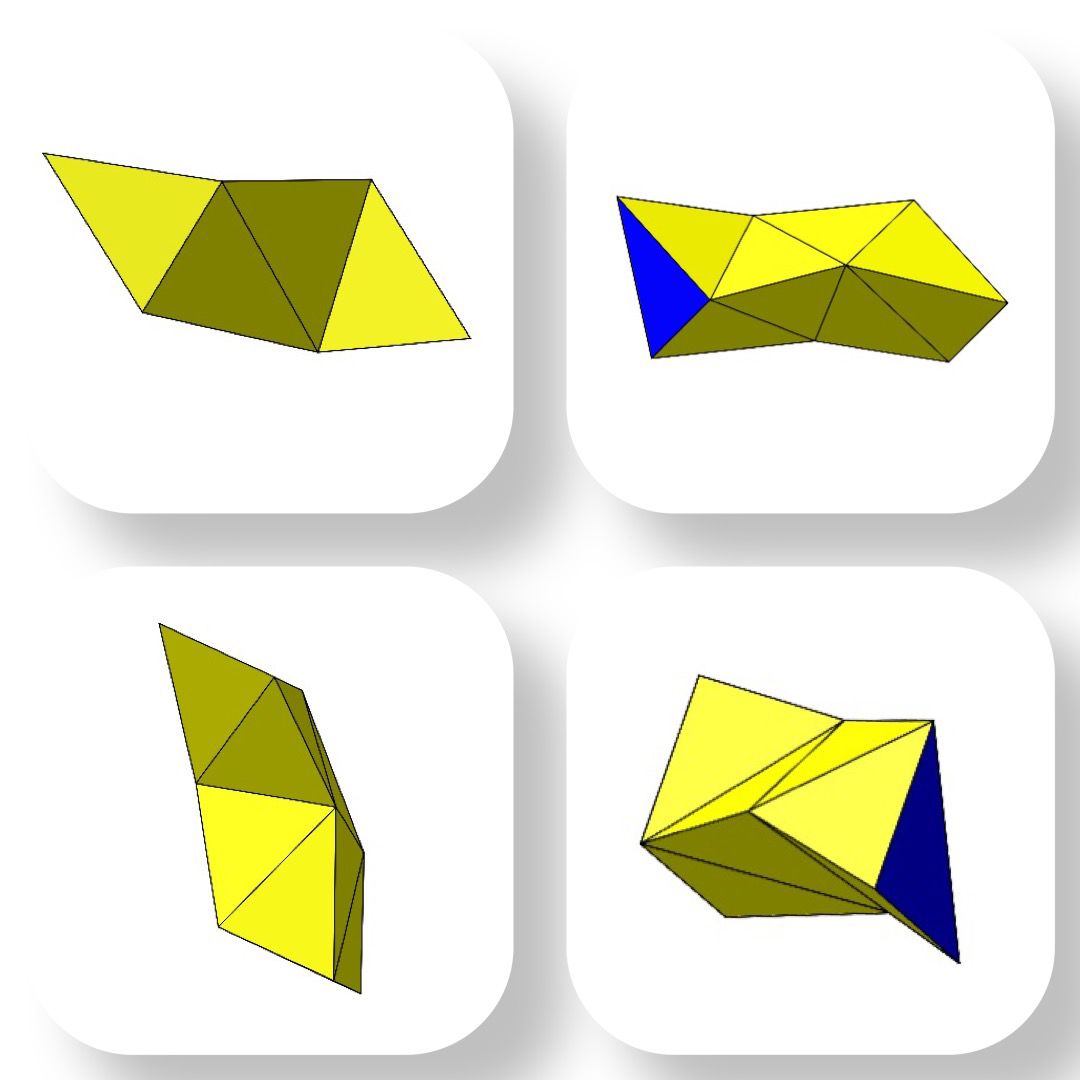}
  \caption{Various views of the polyhedron that results from embedding the wild-coloured multi-tetrahedral sphere corresponding to $\mathcal{H}_6^{\ell(r_2,1)}$ into $\mathbb{R}^3$.}
  \label{fig:6helix}
\end{figure} 

Now, we describe how affine maps can be used to obtain a wild-coloured simplicial surface with corresponding embedding from the embeddings of two given wild-coloured simplicial surfaces.
Broadly, we attach the faces of the polyhedron that arises from the embedding of the second wild-coloured simplicial surface to the faces of the polyhedron of the first one. Thus, we obtain a polyhedron whose incidence structure represents a wild-coloured simplicial surface. The procedure is described in Remark~\ref{remark:construction} and forms one of our key ingredients in constructing multi-tetrahedral tori in Sections ~\ref{section:construction}, \ref{section:infinitefamily} and \ref{section:highergenus}.

\begin{remark}
\label{remark:construction}
Let $X$ and $Y$ be wild-coloured simplicial surfaces with faces $f_X=\{v_1,v_2,v_3\}$ and $f_Y=\{w_1,w_2,w_3\},$ respectively. For simplicity, we assume that $X_0$ and $Y_0$ are disjoint sets. Furthermore, let the map $\phi_X: X_0\to \mathbb{R}^3$ be a strong $(a,b,c)$-embedding of $X$ and $\phi_Y: Y_0\to \mathbb{R}^3$ be a strong $(a,b,c)$-embedding of $Y$ with corresponding polyhedra $\mathcal{P}_X$ and $\mathcal{P}_Y,$ respectively. 
We assume that the following distance equations hold: 
\begin{align*}
  &\Vert \phi_X(v_1)-\phi_X(v_2)\Vert = \Vert \phi_Y(w_1)-\phi_Y(w_2)\Vert =a,\\
  &\Vert \phi_X(v_1)-\phi_X(v_3)\Vert = \Vert \phi_Y(w_1)-\phi_Y(w_3)\Vert =b, \\
    & \Vert \phi_X(v_2)-\phi_X(v_3)\Vert = \Vert \phi_Y(w_2)-\phi_Y(w_3)\Vert =c.
\end{align*}
Our goal is to construct a surface by applying an affine transformation (isometry) to the polyhedron $\mathcal{P}_Y,$ so that the polyhedron $\mathcal{P}_X$ and the transformed polyhedron have some faces in common, namely the faces of the two polyhedra that are contained in the plane that is determined by $\phi_X(v_1),\phi_X(v_2),\phi_X(v_3)$.
In particular, we define
\begin{align*}
&v:=\phi_X(v_2) - \phi_X(v_1), \phantom{aa}v':=\phi_X(v_3)- \phi_X(v_1),\\
&w:=\phi_Y(w_2)- \phi_Y(w_1), \phantom{a} w':=\phi_Y(w_3)- \phi_Y(w_1).
\end{align*}
Thus, there exist two unit normal vectors $n_X,n_Y\in \mathbb{R}^3$ such that $n_X$ is orthogonal to the vectors $v$ and $v',$ and $n_Y$ is orthogonal to the vectors $w$ and $w'$.
Since the vectors $v,v',n_X$ and $w,w',n_Y$ are linearly independent in $\mathbb{R}^3$, the matrices $M_X=(v,v',n_X)\in \mathbb{R}^{3\times3}$ and $M_Y=(w,w',n_y)\in \mathbb{R}^{3\times3}$ satisfy $\det(M_1)=\det(M_2)\neq 0$. With $M:=M_X{M_Y}^{-1}$, we can give the following map:
\[
\psi:X_0 \cup Y_0 \to\mathbb{R}^3, \,z\mapsto \left\{
\begin{array}{ll}
    \phi_X(z), & z\in X_0\\
    M(\phi_Y(z)-\phi_Y(w_1))+\phi_X(v_1), &z \in Y_0\\ 
\end{array} \right\}.
\] 

Thus, if the set
\begin{align*}
  Z=&\underset{=:Z_0}{\underbrace{(\psi^{-1}(\psi(X_0\cup Y_0)))}} \cup \underset{=:Z_1}{\underbrace{(\psi^{-1}(\psi(X_1\cup Y_1)))}} \cup \underset{=:Z_2}{\underbrace{(\psi^{-1}(\psi(X_2 \cup Y_2))\setminus \psi^{-1}(\psi(X_2) \cap \psi(Y_2))) }}
\end{align*}
defines a wild-coloured simplicial surface, we say that $Z$ is constructed by attaching the surface $Y$ onto $X$ via the faces $f_X$ and $f_Y$. If $X$ is isomorphic to $Y$ and the polyhedron $\mathcal{P}_X$ arises from applying isometries to the polyhedron $\mathcal{P}_Y,$ then we say that $Z$ is constructed by reflecting $X$ at the plane that is determined by $\phi_X(v_1),\phi_X(v_2),\phi_X(v_3).$

\end{remark}

Depending on the choice of the normal vectors we can obtain two different strong $(a,b,c)$-embeddings of the surface $Z.$
For instance, if we take two vertex disjoint wild-coloured simplicial tetrahedra with corresponding strong embeddings, then the construction in Remark~\ref{remark:construction} generates a wild-coloured simplicial double-tetrahedron with corresponding strong embedding. In general, if $X$ is a wild-coloured simplicial surface, $Y$ is the wild-coloured simplicial tetrahedron and the constructed surface $Z$ is well-defined, then $Z$ is the surface that we obtain from attaching the tetrahedron $Y$ to $X,$ i.e.\ $Z=T^f(X)$ for a face $f\in X.$

\section{Constructing toroidal polyhedra}
\label{section:construction}
We deal with the construction of wild-coloured multi-tetrahedral surfaces that can be embedded into the Euclidean three-dimensional space as toroidal polyhedra in this section. For this construction, we make use of proper wild-coloured multi-tetrahedral spheres. Here, we present the criteria to determine whether a proper multi-tetrahedral sphere can be exploited to construct a toroidal polyhedron (see Definition~\ref{definition:relation}). We are interested in classifying the resulting toroidal polyhedra with respect to self-intersections and reflection symmetries. From this section, we assume that the presented multi-tetrahedral surfaces are wild-coloured and proper. In the following, we present a formal definition of a self-intersecting polyhedron. 

\begin{definition}
\label{def:selfintersection}
  Let $X$ be a simplicial surface and $\phi$ be a strong $(a,b,c)$-embedding of $X.$
  We say that $\mathcal{P}:=X_{(a,b,c)}^\phi$ is \emph{self-intersecting}, if there exist an edge 
  $e\in X_1$ and a face $f\in X_2$ with $e\cap f=\emptyset$ such that $(\conv(\phi(e))\cap \conv(\phi(f)))\setminus (\phi(e)\cup \phi(f))\neq  \emptyset$ holds.
  Otherwise, we say that $\mathcal{P}$ is \emph{non self-intersecting}.
\end{definition}
For instance, the great icosahedron also known as the stellated icosahedron is an example of a polyhedron with self-intersections, see \cite{icosahedron}.

\subsection{Toroidal polyhedra with reflection symmetry}
\label{subsection:withmirrorsymmetry}
In this subsection, we describe the construction of multi-tetrahedral tori with corresponding strong embeddings by imposing reflection symmetries on the resulting toroidal polyhedra. The idea is to construct a polyhedron corresponding to a multi-tetrahedral sphere such that two faces of the polyhedron are contained in the same plane (see Definition~\ref{def:propertyt1}). Then we compute the reflection image of the given polyhedron with respect to the derived plane and construct a toroidal polyhedron by combining these two polyhedra. First, we give the following definition.

\begin{definition}
\label{def:propertyt1}
  Let $X$ be a proper multi-tetrahedral sphere. Since $X$ is proper, there are exactly two vertices of degree $3$, namely $v$ and $w.$ Let $f_{v}=\{v,v_2,v_3\}$ be a face incident to the vertex $v$ and $f_{w}=\{w,w_2,w_3\}$ a face incident to the vertex $w$. 
  Moreover, let $\phi$ be a strong $(a,b,c)$-embedding of $X$ and $\hat{n}\in \R^3$ be the vector that is orthogonal to $(\phi(v_2)-\phi(v))$ and $(\phi(v_3)-\phi(v)).$ Since the three vectors $(\phi(v_2)-\phi(v)),(\phi(v_3)-\phi(v))$ and $\hat{n}$ are linearly independent, we know that for every $v'\in X_0$ there exist unique $r_{v'},s_{v'},t_{v'} \in \R$ such that $\phi(v')=\phi(v)+r_{v'}(\phi(v_2)-v)+s_{v'}(\phi(v_3)-v)+t_{v'}\hat{n}.$ We say that the tuple $(X,v,w,f_v,f_w,\phi)$ satisfies the \emph{property \textit{(T1)}}, if
  \begin{enumerate}
    \item for the plane $P= \phi(v)+\langle \phi(v_2)- \phi(v),\phi(v_3)- \phi(v)\rangle,$ the following equality holds:
    \[
    \{v'\in X_0\mid \phi(v')\in P\}=\{v,v_2,v_3,w,w_2,w_3\},
    \]
    \item all edges $e$ of $X$ satisfying $\conv(\phi(e))\subseteq P$ have to be incident to $f_v$ or $f_w,$ and 
    \item  for the set $M:=\{(r_{v'},s_{v'},\vert t_{v'}\vert )\mid v'\in X_0\}$ the equality $\vert M \vert =\vert X_0 \vert$ holds.
  \end{enumerate}
 
\end{definition}
With this definition, we are able to describe a construction of a multi-tetrahedral torus, see Proposition~\ref{prop:torus1}.
\begin{prop}\label{prop:torus1}
Let $X$ be a proper multi-tetrahedral sphere. If there exist vertices $v,w\in X_0$, faces $f_v\in X_2(v),f_w\in X_2(w)$ and a strong $(a,b,c)$-embedding $\phi$ of $X$ such that $(X,f_v,f_w,v,w,\phi)$ satisfies property (T1), then there exists a multi-tetrahedral torus $Y$ with $\vert X_2 \vert -4$ faces.
\end{prop}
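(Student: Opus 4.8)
The idea is to realize the torus $Y$ concretely as the union of the polyhedron $\mathcal{P} = X^{\phi}_{(a,b,c)}$ with its mirror image, after removing the two "cap" tetrahedra at the degree-$3$ vertices $v$ and $w$. The plan is to apply Remark~\ref{remark:construction} with the surface $X' := T_v(T_w(X))$ obtained from $X$ by two tetrahedral reductions (Definition~\ref{def:reduction}), which deletes the vertices $v$ and $w$ together with their incident faces and introduces the two triangles $\{v_2,v_3,v'\}$ and $\{w_2,w_3,w'\}$ lying in the plane $P$ (here $v'$, $w'$ are the apices opposite $v$, $w$ in the respective caps). Since $X$ is proper, it corresponds to a tetra-helix-like chain, so $X'$ is a proper multi-tetrahedral surface with $|X_2| - 2$ faces whose embedding is the restriction of $\phi$; by property (T1)(1)--(2), the only faces and edges of $X'$ lying in $P$ are precisely these two boundary triangles.

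Next I would reflect $X'$ through the plane $P$: apply Remark~\ref{remark:construction} to $X := X'$ and $Y := X'$ (a disjoint isomorphic copy) via the faces $f_{v'} = \{v_2,v_3,v'\}$ and its image, choosing the normal vector so that the reflected copy lies on the opposite side of $P$. This glues $\mathcal{P}' := X'^{\phi}_{(a,b,c)}$ to its mirror image along the triangle $\{v_2,v_3,v'\}$; doing the same simultaneously along $\{w_2,w_3,w'\}$ identifies the two boundary triangles at both ends, so the resulting surface $Y$ is closed. Counting faces: $X'$ has $|X_2|-2$ faces, two of which (the two boundary triangles in $P$) get identified with their mirror copies and hence are removed in the symmetric-difference/$Z_2$ construction of Remark~\ref{remark:construction}; so $Y$ has $2(|X_2|-2) - 2 = 2|X_2| - 6$ faces. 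Hmm — to match the claimed count $|X_2| - 4$, one should instead only mirror and re-glue, not double: the torus is built from one copy of $\mathcal{P}'$ by bending it around so its two boundary triangles meet. Concretely, $X'$ has two boundary triangles (faces $\eta = 1$ would-be-internal); forming $Y$ by identifying $\{v_2,v_3,v'\}$ with $\{w_2,w_3,w'\}$ removes $2$ faces, and since $X$ had $4$ more faces than $X'$ before the two reductions removed $2$ faces each... tracking this bookkeeping carefully is one routine point: $|Y_2| = |X_2| - 4$ should come out as (faces of $X$) minus (the $3+3=6$ cap faces removed) plus (the $2$ new floor triangles) minus (those same $2$ triangles identified away) $= |X_2| - 6 + 2 - 2 + \ldots$; I would simply recount against the tetra-symbol.

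The two things that need genuine verification are: (i) that the identification is *geometrically* consistent — i.e. the isometry matching $\{v_2,v_3,v'\}$ to $\{w_2,w_3,w'\}$ is realized by the reflection in $P$, which is exactly the content of property (T1): both triangles lie in $P$, are congruent (both have edge lengths among $\{a,b,c\}$ since they are faces of wild tetrahedra), and the reflection fixes $P$ pointwise, so the glued object closes up without distortion; and (ii) that the resulting combinatorial object $Z$ really satisfies the simplicial-surface axioms of Definition~\ref{def:simplicialsurface} — the umbrella condition at the six vertices $v_2,v_3,v',w_2,w_3,w'$ in $P$ must be checked, and this is where condition (T1)(2) is used (no *other* edges lie in $P$, so no unexpected coincidences occur) together with (T1)(3) (the map $v' \mapsto (r_{v'},s_{v'},|t_{v'}|)$ is injective, guaranteeing that no two vertices of $X'$ and its mirror image collide except along $P$, so $Y$ is vertex-faithful and the embedding is strong). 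Finally, $\chi(Y) = 0$: gluing two disks' worth... rather, since $Y$ is closed, orientable (each half inherits an orientation, reversed by the reflection, which is the correct way to glue), connected, and is visibly not a sphere (it bounds a solid-torus-like region), Euler's formula gives genus $1$; alternatively compute $\chi$ directly from the face/edge/vertex count. The main obstacle I expect is verifying the umbrella condition at the glued vertices and confirming injectivity of the global embedding $\psi$ — i.e. that the two halves meet *only* in the plane $P$ — which is precisely what the somewhat technical clauses (T1)(2) and (T1)(3) are engineered to supply.
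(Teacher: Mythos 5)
Your opening instinct --- reflect the embedded sphere through the plane $P$ and glue via Remark~\ref{remark:construction} --- is exactly the paper's (one-line) proof, but your execution breaks down in two places. First, the preliminary tetrahedral reductions $X'=T_v(T_w(X))$ rest on a false claim: the faces $\{v_2,v_3,v'\}$ and $\{w_2,w_3,w'\}$ that the reductions introduce do \emph{not} lie in $P$. Condition (T1)(1) says that the only vertices of $X$ mapped into $P$ are $v,v_2,v_3,w,w_2,w_3$; the apices $v',w'$ are explicitly excluded, so these new faces are transverse to $P$ and are not fixed by the reflection, and the gluing you describe along $\{v_2,v_3,v'\}$ and its mirror image does not close up as claimed. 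The paper instead reflects $X$ itself: the faces lying in $P$ are $f_v$ and $f_w$ (their vertex sets are precisely the six vertices of (T1)(1)), they coincide with their mirror images, and the set $Z_2$ of Remark~\ref{remark:construction} deletes them from both copies; conditions (T1)(2) and (T1)(3) guarantee that these are the \emph{only} coincidences between $X$ and its mirror image, so the result is the closed multi-tetrahedral torus corresponding to the doubled (now perfect) chain.

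Second, when the face count fails to match, you abandon the reflection and pivot to ``bending one copy around so its two boundary triangles meet'', i.e.\ identifying $f_v$ with $f_w$ directly. That is the content of property (T2) and Proposition~\ref{prop:torus2}, and it is not available here: (T1) only asserts that $\phi(f_v)$ and $\phi(f_w)$ are coplanar, not that they coincide, so without the mirror copy there is nothing to identify. Your bookkeeping is accordingly never resolved ($\vert X_2\vert-6+2-2=\vert X_2\vert-6$, left as ``$+\ldots$''), and the proposal ends without committing to a construction or a count. For what it is worth, the reflection construction yields $2\vert X_2\vert-4$ faces (two faces removed from each of the two copies), which agrees with Algorithm~\ref{alg:torus1} (``$2n-4$'') and with the $28$-face torus arising from a $16$-face sphere; the ``$\vert X_2\vert-4$'' in the statement and in the paper's own proof appears to be a typo, and chasing that number is what derailed you. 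The verifications you flag at the end (umbrella condition at the six plane vertices, injectivity of the glued embedding) are indeed exactly where (T1)(2) and (T1)(3) enter, and that part of your discussion is sound.
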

\begin{proof}
Since $(X,v,w,f_v,f_w,\phi)$ satisfies property \textit{(T1)}, we know that using Remark~\ref{remark:construction} and reflecting $X$ at the plane that is determined by the face $\phi(f_v)$ results in a proper multi-tetrahedral torus $Y$ with $\vert Y_2 \vert =\vert X_2 \vert -4.$ 
\end{proof}
Given a prescribed number of faces $n$, we present an algorithm to construct all toroidal polyhedra with reflection symmetries and $2n-4$ faces that arise from the proper multi-tetrahedral spheres with $n$ faces.
This algorithm exploits Proposition~\ref{prop:torus1}. In particular, we make use of proper multi-tetrahedral spheres to construct multi-tetrahedral tori with corresponding embeddings and examine the resulting toroidal polyhedra with respect to self-intersections. 

\begin{algorithm}[H]
    \caption{To construct a set of toroidal polyhedra with reflection symmetries}
    \label{alg:torus1}
\KwData{A natural number $n$ satisfying $n \mod 2 \equiv 0$}
\KwResult{The set of all toroidal polyhedra whose corresponding multi-tetrahedral tori consist of $2n-4$ faces}
$\texttt{properSpheres} \gets $ set of all proper multi-tetrahedral spheres with $n$ faces\;
$\texttt{toroidalPolSelfInt}\gets \{\}$\;
$\texttt{toroidalPolNonSelfInt}\gets \{\}$\;
\For{$X \; \text{in} \; \texttt{properSpheres}$}{
   $v\gets v' \in X_0$ with $\deg(v')=3$\;
   $w\gets v'\in X_0\setminus \{v\}$ with $\deg(v')=3$\;
\For{$f_v \; \text{in} \; X_2(v)$}
{
  \For{$f_w \; \text{in} \; X_2(w)$}
  {
    \If{there exists a strong $(a,b,c)$-embedding $\phi$ of $X$ such that $(X,v,w,f_v,f_w,\phi)$ satisfies property (T1)}
    {
      $Y\gets $the surface that is constructed by reflecting $X$ through the plane $P$ that is defined by $\phi(f_v)$ as described in Remark~\ref{remark:construction}\; 
      \If{$Y^\phi_{(a,b,c)}$ has self intersections}
      {
        $\texttt{toroidalPolSelfInt}\gets \texttt{toroidalPolSelfInt}\cup \{Y\}$\;
      }
      \Else{$\texttt{toroidalPolNonSelfInt}\gets \texttt{toroidalPolNonSelfInt}\cup \{Y\}$ }
    }   
  }
}
}
\Return $\texttt{toroidalPolSelfInt},\texttt{toroidalPolNonSelfInt}$\;
\end{algorithm}
\vspace{0.5cm}

The correctness of the above algorithm follows from the proof of Proposition~\ref{prop:torus1}.
Examples of toroidal polyhedra with reflection symmetries are illustrated in Figure~\ref{fig:smallest_torus} and Figure~\ref{fig:torusmirrorselfint}. The polyhedron shown in Figure~\ref{fig:torusmirrorselfint} is an example of a toroidal polyhedron with self-intersections.
\begin{figure}[H]
  \centering  
  \includegraphics[height=5cm]{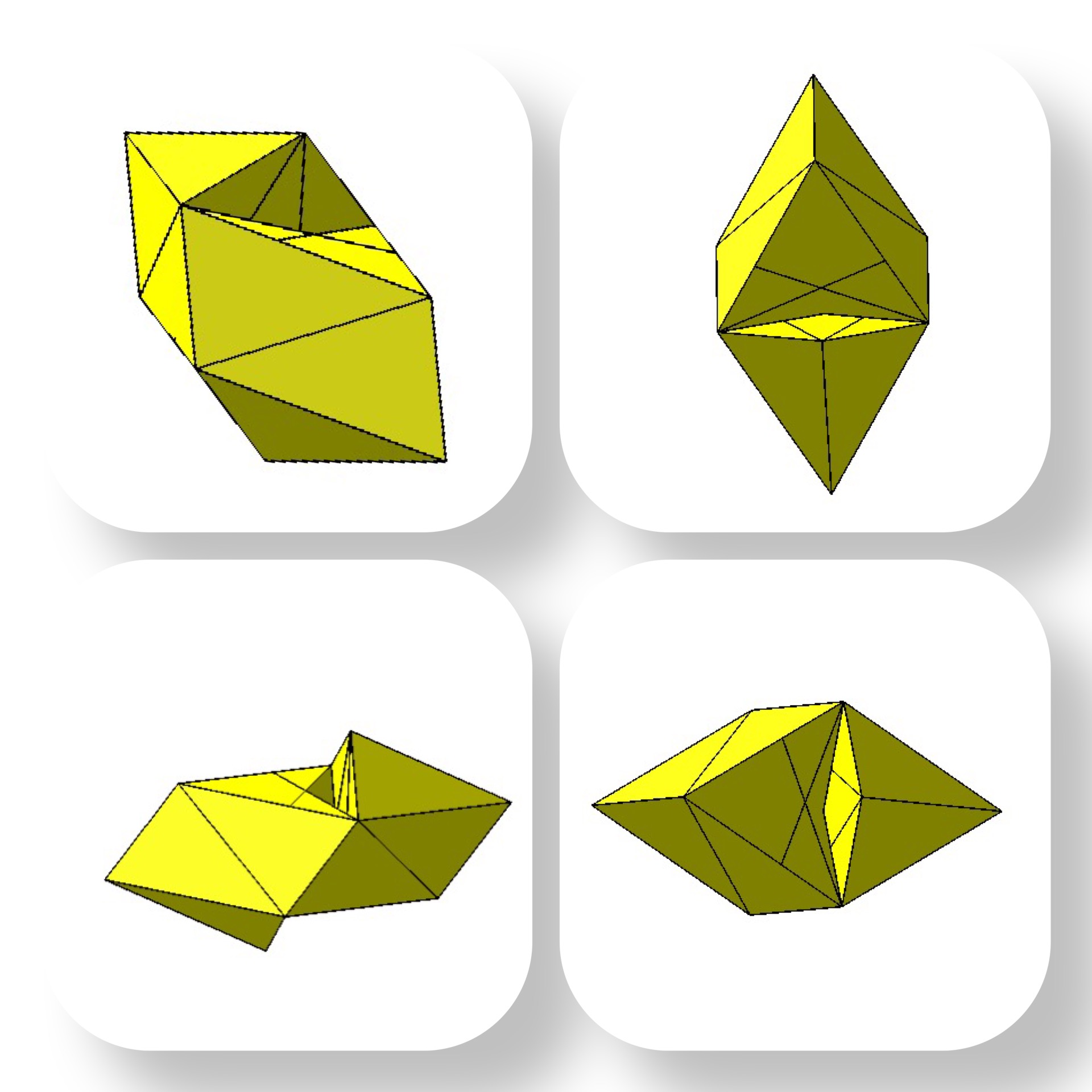}
  \caption{Various views of a toroidal polyhedron with self-intersections and reflection symmetry resulting from a strong $(1, \tfrac{\sqrt{150 - 30\sqrt{5}}}{10}, \tfrac{\sqrt{5}-1}{2})$-embedding of a multi-tetrahedral torus with $28$ faces}
  \label{fig:torusmirrorselfint}
\end{figure} 

\begin{example}
\label{example:doublehelix11}
  Another example of a multi-tetrahedral sphere that gives rise to a toroidal polyhedron is the simplicial surface that corresponds to the double tetra-helix $\mathcal{D}_{11}^{\ell(r_2,1)},$ where $\ell(r_2,1)$ is given as in Example~\ref{example:helix6}. Using \textsc{Maple}, we verified that there exist two faces that are incident to different vertices of degree $3$ such that all their embedded vertices lie on exactly one plane $P$. Additionally, the other two conditions described in Definition~\ref{def:propertyt1} hold. Thus, by reflecting the surface corresponding to $\mathcal{D}_{11}^{\ell(r_2,1)}$ through this plane $P$, we obtain the toroidal polyhedron shown in Figure~\ref{fig:torus12}. Note that this toroidal polyhedron does not contain any self-intersections. 
\end{example}
\begin{figure}[H]
  \centering    \includegraphics[height=5cm]{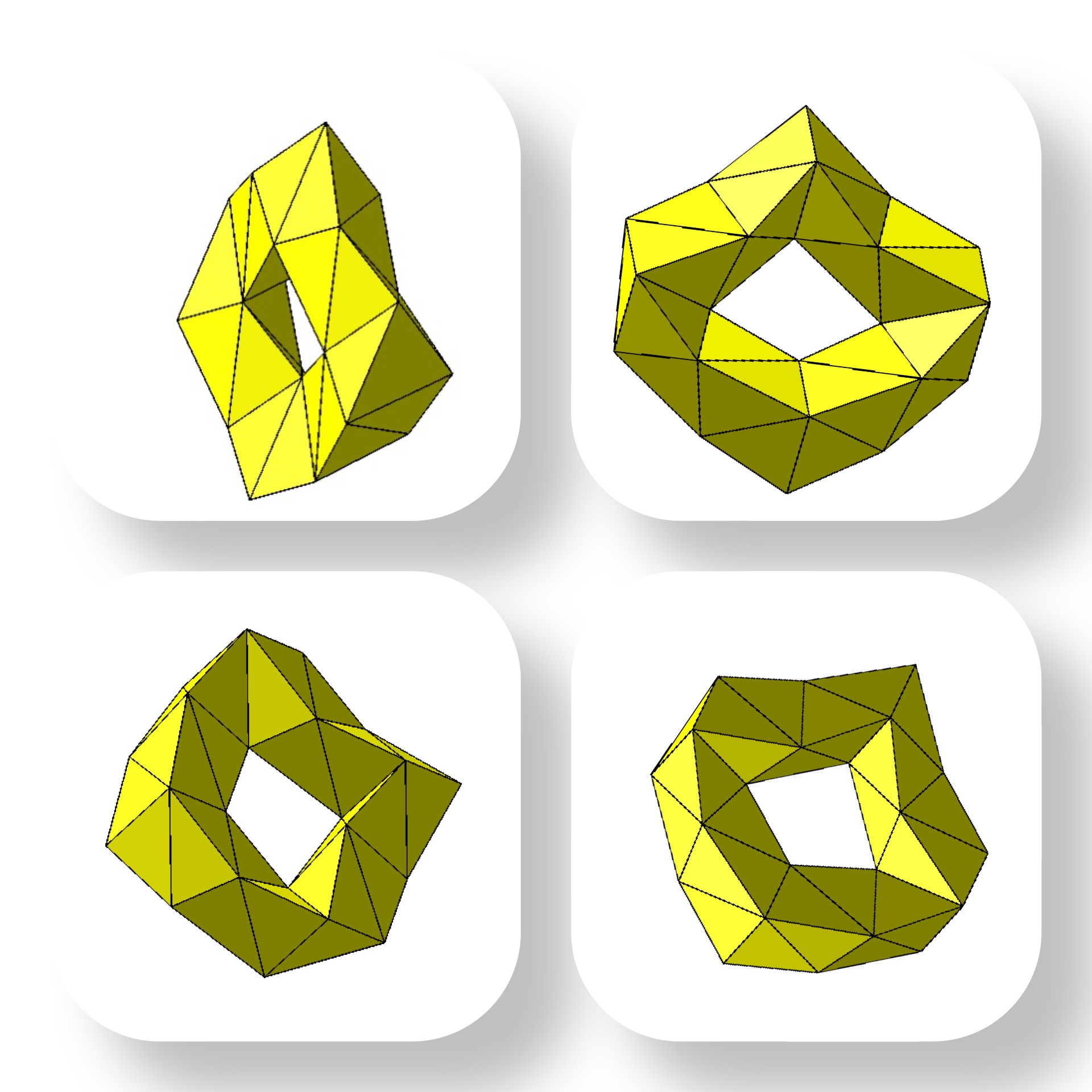}
  \caption{Various views of a toroidal polyhedron without self-intersections and with reflection symmetry consisting of faces with edge lengths $\ell(r_2,1)$ that results from embedding a multi-tetrahedral torus with 44 faces.}
  \label{fig:torus12}
\end{figure} 

\subsection{Toroidal polyhedra without reflection-symmetry}
\label{subsection:withoutmirrorsymmetry}

In this subsection, we present a procedure to construct multi-tetrahedral tori without enforcing reflection symmetries on the resulting toroidal polyhedra, i.e.\ we aim to construct a multi-tetrahedral torus without attaching a copy to a given multi-tetrahedral sphere in the sense of Remark~\ref{remark:construction} as described in Section~\ref{subsection:withmirrorsymmetry}.

\begin{definition}
\label{def:propertyt2}
  Let $X$ be a proper multi-tetrahedral sphere. Since $X$ is proper, it contains exactly two vertices of degree $3$, namely $v$ and $w.$ Let $f_v=\{v,v_2,v_3\}$ be a face that is incident to $v$ and $f_w=\{w,w_2,w_3\}$ a face that is incident to $w$. Moreover, let $\phi$ be a weak $(a,b,c)$-embedding of $X.$ We say that the tuple $(X,v,w,f_v,f_w,\phi)$ satisfies the property \textit{(T2)}, if 
  \begin{enumerate}
\item for every $x\in \{v,v_2,v_3\}$, there exists $y\in \{w,w_2,w_3\}$ such that $\phi(x)=\phi(y)$,
\item the restriction of $\phi$ to $X_0\setminus\{v,v_2,v_3\}$ is injective.
\end{enumerate}
\end{definition}
This definition gives rise to the following proposition that establishes a construction method for toroidal polyhedra without enforcing reflection symmetries.
\begin{prop}
\label{prop:torus2}
Let $X$ be a proper multi-tetrahedral sphere. If there exist vertices $v,w\in X_0$, faces $f_v\in X_2(v),f_w\in X_2(w)$ and a weak $(a,b,c)$-embedding $\phi$ of $X$ such that $(X,v,w,f_v,f_w,\phi)$ satisfies property (T2), then there exists a multi-tetrahedral torus $Y$ with $\vert X_2 \vert -2$ faces.
\end{prop}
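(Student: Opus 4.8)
\textbf{Proof proposal for Proposition~\ref{prop:torus2}.}

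The plan is to carry out essentially the same gluing construction as in Proposition~\ref{prop:torus1}, but using property \textit{(T2)} in place of \textit{(T1)}, so that the two degree-$3$ faces $f_v$ and $f_w$ of a single proper multi-tetrahedral sphere $X$ are identified with each other rather than with the faces of a reflected copy. Concretely, I would first use the combinatorial operation underlying Remark~\ref{remark:construction}: form the quotient simplicial complex obtained from $X$ by identifying the vertices of $f_v$ with the vertices of $f_w$ according to the matching supplied by property \textit{(T2)}, i.e.\ $v$ with the $y\in\{w,w_2,w_3\}$ satisfying $\phi(v)=\phi(y)$, and similarly for $v_2$ and $v_3$, and then deleting the (now doubled) face, exactly as the symmetric-difference bookkeeping in Definition~\ref{definition:relation} prescribes. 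Call the resulting combinatorial object $Y$. Since $f_v$ and $f_w$ each account for one vertex of degree $3$ and two shared vertices, this identification removes $3$ vertices and $3$ edges and $2$ faces from the face/edge/vertex counts of $X$, so $\vert Y_2\vert = \vert X_2\vert - 2$; this already matches the claimed face count.

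Next I would verify that $Y$ is genuinely a simplicial surface in the sense of Definition~\ref{def:simplicialsurface}: every edge still lies in exactly two faces (the deleted face was the unique face lying in two tetrahedra after the identification, and removing it restores the edge–face count of $2$ along the three boundary edges), and the umbrella condition at each identified vertex holds because the two partial umbrellas coming from the two degree-$3$ corners of $X$ concatenate into a single closed umbrella. Here the vertex-faithfulness built into property \textit{(T2)}(2) — the restriction of $\phi$ to $X_0\setminus\{v,v_2,v_3\}$ is injective — is what guarantees that no \emph{further} unintended vertex identifications occur, so $Y$ can still be described as a simplicial complex. Then I would compute $\chi(Y)$: starting from $\chi(X)=2$ (a multi-tetrahedral sphere), the identification changes the Euler characteristic by $\Delta\chi = -3 + 3 - 2 = -2$, giving $\chi(Y)=0$, so $Y$ is a simplicial torus, and it is proper since $X$ was a chain and the gluing closes the chain into a loop; hence $Y$ is a multi-tetrahedral torus as required. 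Finally, the weak $(a,b,c)$-embedding $\phi$ of $X$ descends to a well-defined weak $(a,b,c)$-embedding of $Y$, because property \textit{(T2)}(1) says precisely that $\phi$ already agrees on the vertices being identified; the induced embedding realises $Y$ as a polyhedron whose faces are congruent $(a,b,c)$-triangles, so the wild-colouring is respected as well.

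The main obstacle I anticipate is the verification that $Y$ satisfies \emph{all three} axioms of a simplicial surface after the identification — in particular confirming that the umbrella condition is not violated at the three boundary vertices and that no edge ends up incident to only one or to more than two faces. One has to check that the face $\conv(\{v,v_2,v_3\})=\conv(\{w,w_2,w_3\})$ under $\phi$ was indeed the face shared by two tetrahedra (so that after removal each of its three edges drops from being in three faces to being in two), and that the remaining combinatorics near those vertices is exactly the concatenation of two umbrellas rather than something degenerate. I would handle this by tracking, via the tetra-symbol of the chain $\tau$ corresponding to $X$, how the tetrahedra incident to $v$ and to $w$ sit in the surface, and then invoking Remark~\ref{remark:construction} with $X$ played by the portion of the chain and $Y$ (in that remark's notation) the final tetrahedron, to conclude that the constructed $Z$ — here our $Y$ — is well-defined precisely when property \textit{(T2)} holds. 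The rest of the argument (the Euler-characteristic count and the descent of $\phi$) is routine bookkeeping.
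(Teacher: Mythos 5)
Your proposal is correct and follows essentially the same route as the paper: identify the three pairs of coinciding vertices and edges supplied by property \textit{(T2)}, delete the faces $f_v$ and $f_w$, and observe that the result is a multi-tetrahedral torus with $\vert X_2\vert-2$ faces. The paper's own proof is in fact terser than yours --- it asserts the construction in two sentences without the Euler-characteristic computation or the explicit check of the edge--face and umbrella conditions that you carry out, so your additional verification is a strict refinement of the published argument rather than a departure from it.
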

\begin{proof}
Let $f_v$ and $f_w$ be given by $f_v=\{v,v_2,v_3\}$ and $f_w=\{w,w_2,w_3\}$. Since $(X,v,w,f_v,f_w,\phi)$ satisfies property \textit{(T2)}, we know that for every $x\in \{v,v_2,v_3\}$, there exists $y\in \{w,w_2,w_3\}$ such that $\phi(x)=\phi(y)$. Thus, we obtain a multi-tetrahedral torus by identifying the three pairs of coinciding vertices and edges of the surface $X$ and deleting the two faces $f_v$ and $f_w$. Hence, the resulting multi-tetrahedral torus $Y$ satisfies $\vert Y_2 \vert =\vert X_2 \vert -2.$ 
\end{proof}

Next, we present an algorithm to construct all toroidal polyhedra without imposing reflection symmetries that arise from proper multi-tetrahedral spheres. Given the number of faces $n$, we obtain such toroidal polyhedra whose corresponding multi-tetrahedral tori consist of $n-2$ faces by using Proposition~\ref{prop:torus2}.

\begin{algorithm}[H]
    \caption{To construct a set of toroidal polyhedra}
    \label{alg:torus2}
\KwData{A natural number $n$ satisfying $n \mod 2 \equiv 0$}
\KwResult{The set of all toroidal polyhedra whose corresponding multi-tetrahedral tori consist of $n-2$ faces}
$\texttt{properSpheres} \gets $ set of all proper multi-tetrahedral spheres with $n$ faces\;
$\texttt{tPolSelfInt}\gets \{\}$\;
$\texttt{tPolNonSelfInt}\gets \{\}$\;
$\texttt{tPolSelfIntMirror}\gets \{\}$\;
$\texttt{tPolNonSelfIntMirror}\gets \{\}$\;
\For{$X \; \text{in} \; \texttt{properSpheres}$}{
   $v\gets v'\in X_0$ with $\deg(v')=3$\;
   $w\gets v'\in X_0\setminus \{v\}$ with $\deg(v')=3$\;
\For{$f_v \; \text{in} \; X_2(v)$}
{
  \For{$f_w \; \text{in} \; X_2(w)$}
  {
    \If{there exists a weak $(a,b,c)$-embedding $\phi$ of $X$ such that $(X,v,w,f_v,f_w,\phi)$ satisfies property \textit{(T2)}}
    {
      $Y\gets $the surface that is constructed by identifying all elements in $X_i(f_v)$ with elements in $X_i(f_w)$ for $i=1,2$ and deleting the faces $f_v$ and $f_w$\; 
      \If{$Y^\phi_{(a,b,c)}$ has self intersections}
      { \If{$Y^\phi_{(a,b,c)}$ has mirror symmetries}
      {
      $\texttt{tPolSelfIntMirror}\gets \texttt{tPolSelfIntMirror}\cup \{Y\}$\;
      }
      \Else{
      $\texttt{tPolSelfInt}\gets \texttt{tPolSelfInt}\cup \{Y\}$
      }
      }
      \Else{
      \If{$Y^\phi_{(a,b,c)}$ has mirror symmetries}
      {
      $\texttt{tPolNonSelfIntMirror}\gets \texttt{tPolNonSelfIntMirror}\cup \{Y\}$\;
      }
      \Else{
      $\texttt{tPolNonSelfInt}\gets \texttt{tPolNonSelfInt}\cup \{Y\}$
      }
       }
    }   
  }
}
}
\Return $\texttt{tPolSelfIntMirror},\texttt{tPolSelfInt},\texttt{tPolNonSelfIntMirror},\texttt{tPolNonSelfInt}$\;

\end{algorithm}

Note that the toroidal polyhedra that result from the above procedure can still have reflection symmetries, although we do not enforce this symmetry on the corresponding polyhedron. The correctness of the above algorithm follows from Proposition~\ref{prop:torus2}.
\section{Infinite family of multi-tetrahedral tori}
\label{section:infinitefamily}
In this section, we present a family of infinitely many multi-tetrahedral surfaces that can be embedded into $\R^3$ as toroidal polyhedra. More precisely, we show that this family is derived from the family of double tetra-helices. We choose $r_2$ to be given as described in Example~\ref{example:helix6}.

\begin{theorem}
\label{thm:infinitefamily}
   For $n\in \mathbb{N}_0,$ let $X$ be the multi-tetrahedral sphere that corresponds to the double tetra-helix $\mathcal{D}_{11+12n}^{\ell(r_2,1)}$ and $\phi$ be a strong $\ell(r_2,1)$-embedding of $X$. Then there exist $v,w\in X_0$ and faces $f_v\in X_2(v),f_w\in X_2(w)$ such that the tuple $(X,v,w,f_v,f_w,\phi)$ satisfies the property \textit{(T1)} (see Definition~\ref{def:propertyt1}).
\end{theorem}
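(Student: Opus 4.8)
The plan is induction on $n$. The base case $n=0$ is Example~\ref{example:doublehelix11}, which verifies \textit{(T1)} for $\mathcal D_{11}^{\ell(r_2,1)}$, and from whose explicit coordinates one also reads off that, apart from the six vertices of the two terminal faces, $\mathcal D_{11}^{\ell(r_2,1)}$ lies on a single side of the common plane $P^{(0)}$; this one-sidedness I would carry along as part of the inductive hypothesis. The combinatorial shape of a double tetra-helix, read off from its tetra-symbol, is a seed tetrahedron $T_1$ with two strands $A=(T_1,T_2,T_4,\dots)$ and $B=(T_1,T_3,T_5,\dots)$, each a tetra-helix on $6(n+1)$ tetrahedra and meeting the other only in $T_1$; the two degree-$3$ vertices are the terminal apices of $A$ and $B$, and $f_v,f_w$ are the distinguished faces at those vertices --- the ones playing, in Construction~\ref{example:helix6} and in Example~\ref{example:doublehelix11}, the role of the cap-faces $\{v_1,v_2,v_3\}$ and $\{v_7,v_8,v_9\}$. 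Passing from $\mathcal D_{11+12n}$ to $\mathcal D_{11+12(n+1)}$ then amounts to attaching, in the sense of Remark~\ref{remark:construction}, one copy of the $\mathcal H_6$-module of Construction~\ref{example:helix6} to $f_v$ and one to $f_w$, so that $A$ and $B$ grow to tetra-helices on $6(n+2)$ tetrahedra.

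Since, by Lemma~\ref{lemma:1embedding_1colouring}, the strong $\ell(r_2,1)$-embedding of a multi-tetrahedral sphere is unique up to isometry, the embedding $\phi'$ of $\mathcal D_{11+12(n+1)}$ restricts on the sub-surface $\mathcal D_{11+12n}$ to the embedding $\phi$ from the hypothesis and on each appended block to an isometric copy of the embedding of Construction~\ref{example:helix6}. The parallel-caps property then says that attaching a module to a terminal face of a strand pushes that face into a plane parallel to its old position, while (also visible from the explicit coordinates of Construction~\ref{example:helix6}) the interior vertices of a module lie strictly between its two parallel cap-planes, and a module glued to $\mathcal D_{11+12n}$ bends to the side of the current plane $P^{(n)}$ away from $\phi(\mathcal D_{11+12n})$. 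Hence, writing $P^{(n)}$ for the plane through $\phi(f_v),\phi(f_w)$ from the hypothesis, $\phi'(f_v)$ lies in a plane parallel to $P^{(n)}$ at some offset $\delta_v\neq 0$, $\phi'(f_w)$ in a plane parallel to $P^{(n)}$ at offset $\delta_w\neq 0$ on the same side, and all of $\phi'(\mathcal D_{11+12(n+1)})$ lies in the slab bounded by $P^{(n)}$ and the farther of these two planes.

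To force $\delta_v=\delta_w$ --- hence condition~1 of \textit{(T1)} --- I would use the symmetry swapping the two strands: the permutation interchanging $A$ and $B$ and fixing $T_1$ descends on the seed to the involution $(1\,2)(3\,4)$ of its tetra-colouring, which preserves the induced wild-colouring, so it is a colour-preserving automorphism of $\mathcal D_{11+12(n+1)}$ and is realised, by uniqueness of the embedding, by an isometry $\rho$ of $\R^3$ --- a half-turn, the same for every $n$, about the line through the midpoints of a pair of opposite edges of $\phi'(T_1)$. Now $\rho$ interchanges $\phi'(f_v)$ and $\phi'(f_w)$, so it maps $P^{(n)}$ to itself; its axis cannot lie inside $P^{(n)}$, for then $\rho$ would swap the two sides of $P^{(n)}$, whereas both appended blocks lie on the same side and $\delta_v\neq 0$. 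So the axis of $\rho$ is orthogonal to $P^{(n)}$, $\rho$ preserves signed distance from $P^{(n)}$, and --- as it carries the block on $A$ onto the block on $B$ --- $\delta_v=\delta_w=:\delta$. Thus $\phi'(f_v)$ and $\phi'(f_w)$ both lie in the plane $P^{(n+1)}$ parallel to $P^{(n)}$ at signed distance $\delta$, with everything else on one side. Conditions~2 and~3 of \textit{(T1)} then follow: an edge contained in $P^{(n+1)}$ joins two of the six terminal-face vertices, hence is an edge of $f_v$ or $f_w$, and no two vertices are mirror images across $P^{(n+1)}$, so $v'\mapsto(r_{v'},s_{v'},\lvert t_{v'}\rvert)$ is injective; the one-sidedness hypothesis is also preserved, closing the induction.

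The main obstacle I anticipate is the handling of the half-turn $\rho$ and the module-level geometry. One must verify from the tetra-symbol that the $A$--$B$ swap descends to $(1\,2)(3\,4)$ on the seed (rather than to $(1\,2)$), so that it preserves the wild-colouring and is realised by an isometry at all; one must make precise --- and check --- the claim that the appended blocks are genuine isometric copies of Construction~\ref{example:helix6}'s $\mathcal H_6$-module (equivalently, that the parallel-caps property is available at every stage), which is where uniqueness of embeddings and the repetitive structure of the $\ell(r_2,1)$-tetra-helix enter; and one must confirm, from the explicit coordinates of Construction~\ref{example:helix6}, that each module stays inside the slab between its parallel caps, so that the one-sidedness invariant propagates. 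The orthogonality of the axis of $\rho$ to the common plane, by contrast, is reduced above to the non-degeneracy $\delta\neq 0$ of Construction~\ref{example:helix6} together with one-sidedness.
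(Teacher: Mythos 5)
Your proposal follows essentially the same route as the paper's proof: induction on $n$ with Example~\ref{example:doublehelix11} as the base case, and an inductive step that attaches two isometric copies of the $\mathcal{H}_6^{\ell(r_2,1)}$ module of Construction~\ref{example:helix6} to the two terminal faces via Remark~\ref{remark:construction} and uses the parallel-caps property to obtain the new common plane. The only real divergence is that you force $\delta_v=\delta_w$ through a strand-swapping half-turn, whereas the paper obtains this directly from the fact that the two appended blocks are congruent copies of the same polyhedron glued into the same plane; otherwise the arguments coincide.
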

\begin{proof}
We prove the statement by induction over $n$. 
For $n=0,$ we know that $X$ is the multi-tetrahedral sphere that arises from the double tetra-helix $\mathcal{D}_{11}^{\ell(r_2,1)}.$
From Example~\ref{example:doublehelix11}, we conclude that there exist vertices $v,w\in X_0$ of degree $3$ and faces $f_v\in X_2(v),f_w\in X_2(w)$ such that $(X,v,w,f_v,f_w,\phi)$ satisfies property \textit{(T1)}. 
Now, let the statement hold for an arbitrary $n\in \mathbb{N}_{0}$. Let $X$ and $Y$ be the multi-tetrahedral spheres that correspond to the double tetra-helices $\mathcal{D}_{11+12n}^{\ell(r_2,1)}$ and $\mathcal{D}_{11+12(n+1)}^{\ell(r_2,1)}$, respectively. Furthermore, let $\phi_X$ and $\phi_Y$ be the strong $\ell(r_2,1)$-embeddings of $X$ and $Y$ that give rise to the polyhedra $X_{\ell(r_2,1)}$ and $Y_{\ell(r_2,1)}$, respectively. 
By the inductive hypothesis, we know that there exist vertices $v,w\in X_0$, and faces
$f_v\in X_2(v)$ and $f_w\in X_2(w)$ such that the tuple $(X,v,w,f_v,f_w,\phi_X)$ satisfies property \textit{(T1)}. 
Note that $Y_{\ell(r_2,1)}$ is constructed from combining $X_{\ell(r_2,1)}$ with two copies of the polyhedron arising from the tetra-helix $ \mathcal{H}^{\ell(r_2,1)}_6,$ namely $\mathcal{P},\mathcal{P'}.$ Figure~\ref{fig:infinitefamily_helix} illustrates this construction and also indicates that the corresponding embedding $\phi_{Y}$ of $Y$ is indeed injective.
\begin{figure}[H]
  \centering  
  \includegraphics[height=4.5cm]{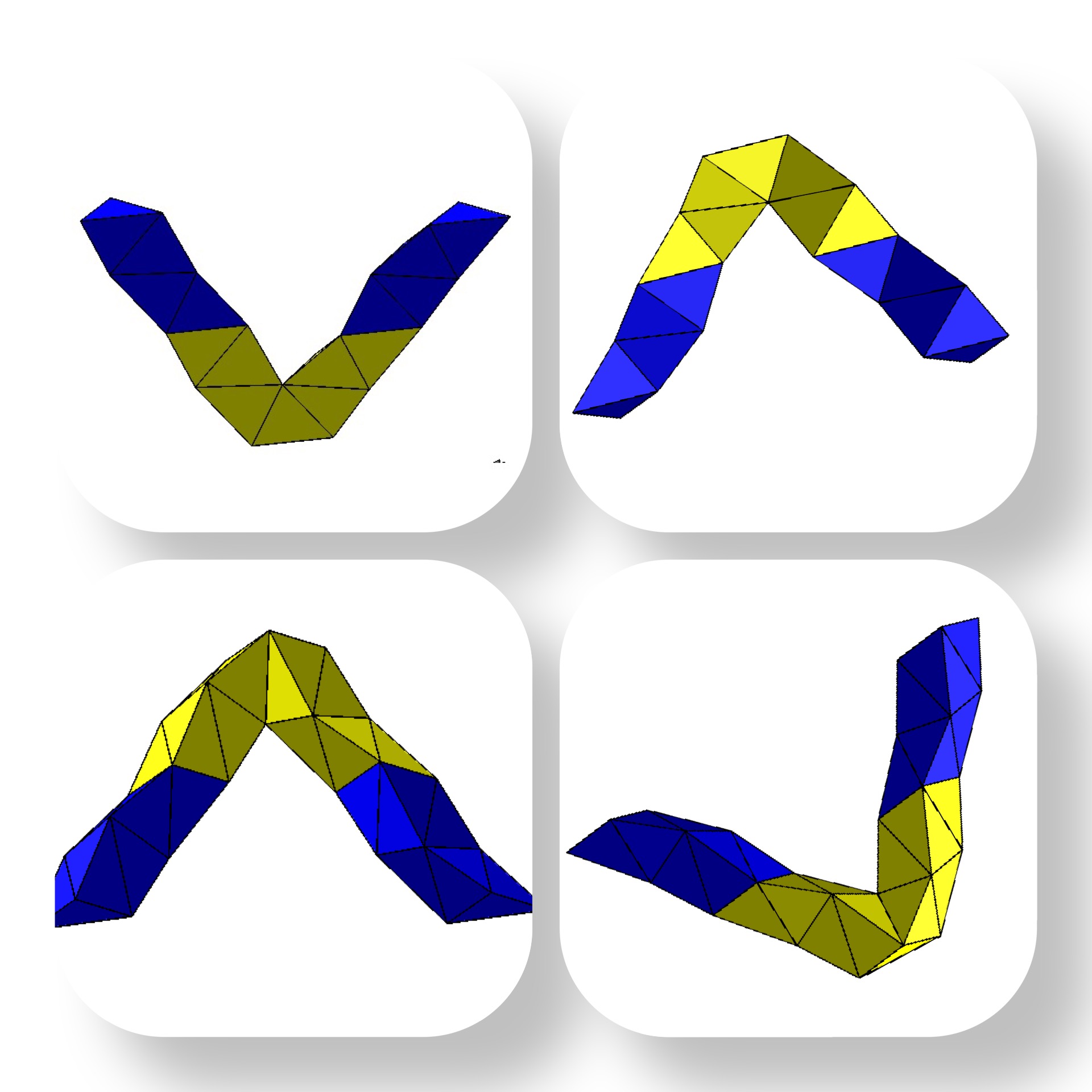} 
  \caption{Various views of constructing a double tetra-helix with $23$ tetrahedra from two copies of $6$-helices (blue) and a double tetra-helix consisting of $11$ tetrahedra (yellow).}
  \label{fig:infinitefamily_helix}
\end{figure} 
From Example~\ref{example:helix6}, we know that the polyhedron $\mathcal{P}$ has two triangular faces $t_1$ and $t_2$ that are incident to embedded vertices of degree $3$ and lie on parallel planes. Similarly, the other copy $\mathcal{P'}$ has triangular faces $t_1'$ and $t_2'$ that are incident to embedded vertices of degree 3 and lie on parallel planes. Next, we apply suitable affine transformations to the polyhedra $\mathcal{P}$ and $\mathcal{P}'$ such that these polyhedra and $X$ have coinciding faces. More precisely, the polyhedron $Y_{\ell(r_2,1)}$ is constructed by identifying the faces $t_1$ and $\conv(\phi_X(f_v))$, and the faces $t_2$ and $\conv(\phi_X(f_w))$ using Remark~\ref{remark:construction}. 
Thus, the faces $t_2$ and $t_2'$ give rise to faces $f_{\overline{v}}$ and $f_{\overline{w}}$ of $Y$ that are incident to vertices $\overline{v}$ and $\overline{w}$ of degree $3$ such that $ 
 \conv(\phi_{Y}(f_{\overline{v}}))$ and $\conv(\phi_{Y}(f_{\overline{w}}))$ lie on the same plane $\overline{P}$.
Since the only edges that are contained in this plane $\overline{P}$ are the edges that are incident to the faces $\conv(\phi_{Y}(f_{\overline{v}}))$ and $\conv(\phi_{Y}(f_{\overline{w}}))$, we have established that $(Y,\overline{v},\overline{w},f_{\overline{v}},f_{\overline{w}},\phi_Y)$ satisfies the property \textit{(T1)}.
\end{proof}

As examples, we illustrate the polyhedra that arise from the double tetra-helices $\mathcal{D}_{11}^{\ell(r_2,1)}$ and $\mathcal{D}_{23}^{\ell(r_2,1)}.$ We also show the toroidal polyhedra that result from reflecting the above polyhedra at the plane that contains the two vertices of degree three in Figure~\ref{fig:infinitefamily}.

\begin{figure}[H]
  \centering
\begin{minipage}{.45\textwidth}
  \begin{subfigure}{.45\textwidth}
  \centering
    \includegraphics[height=4cm]{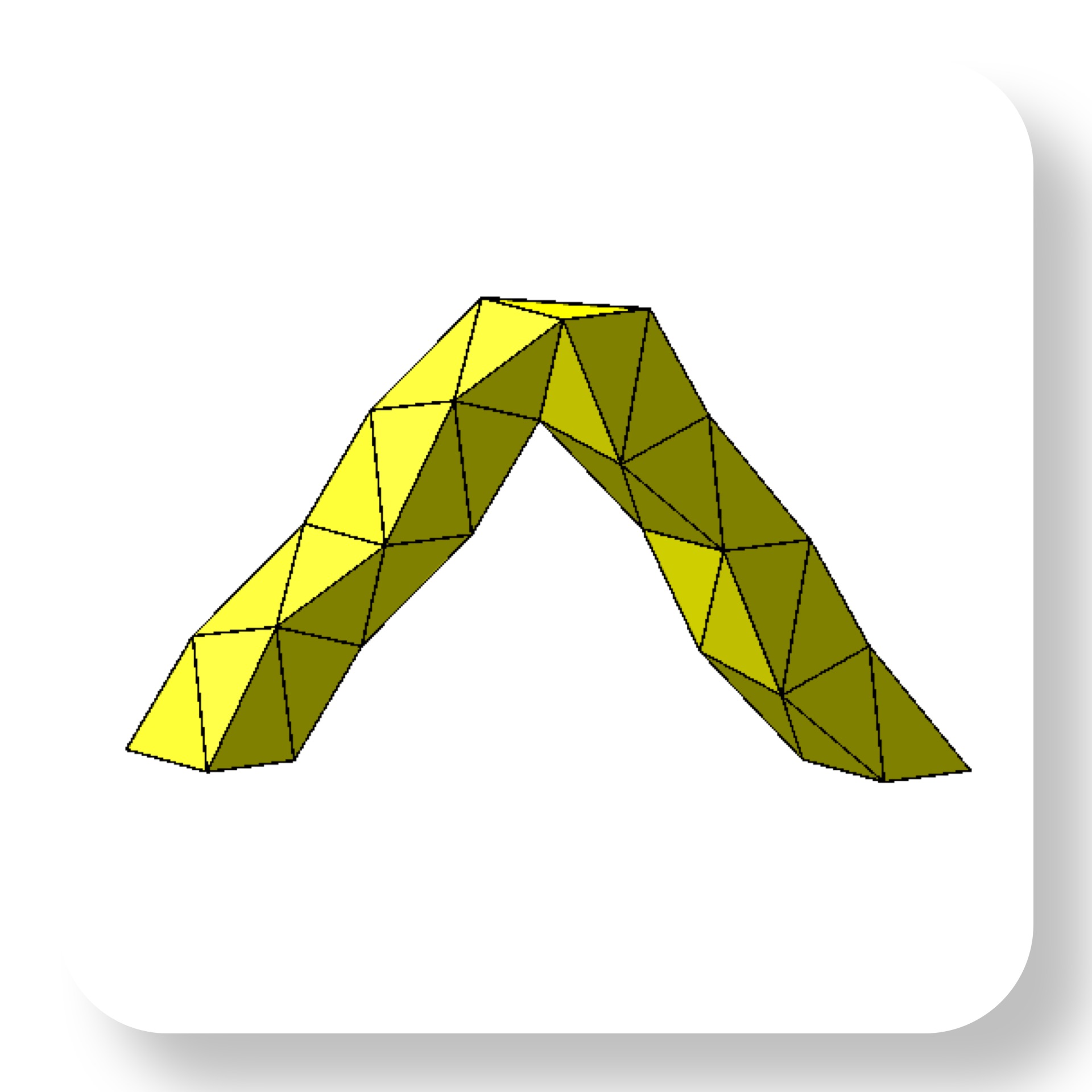}
        \includegraphics[height=4cm]{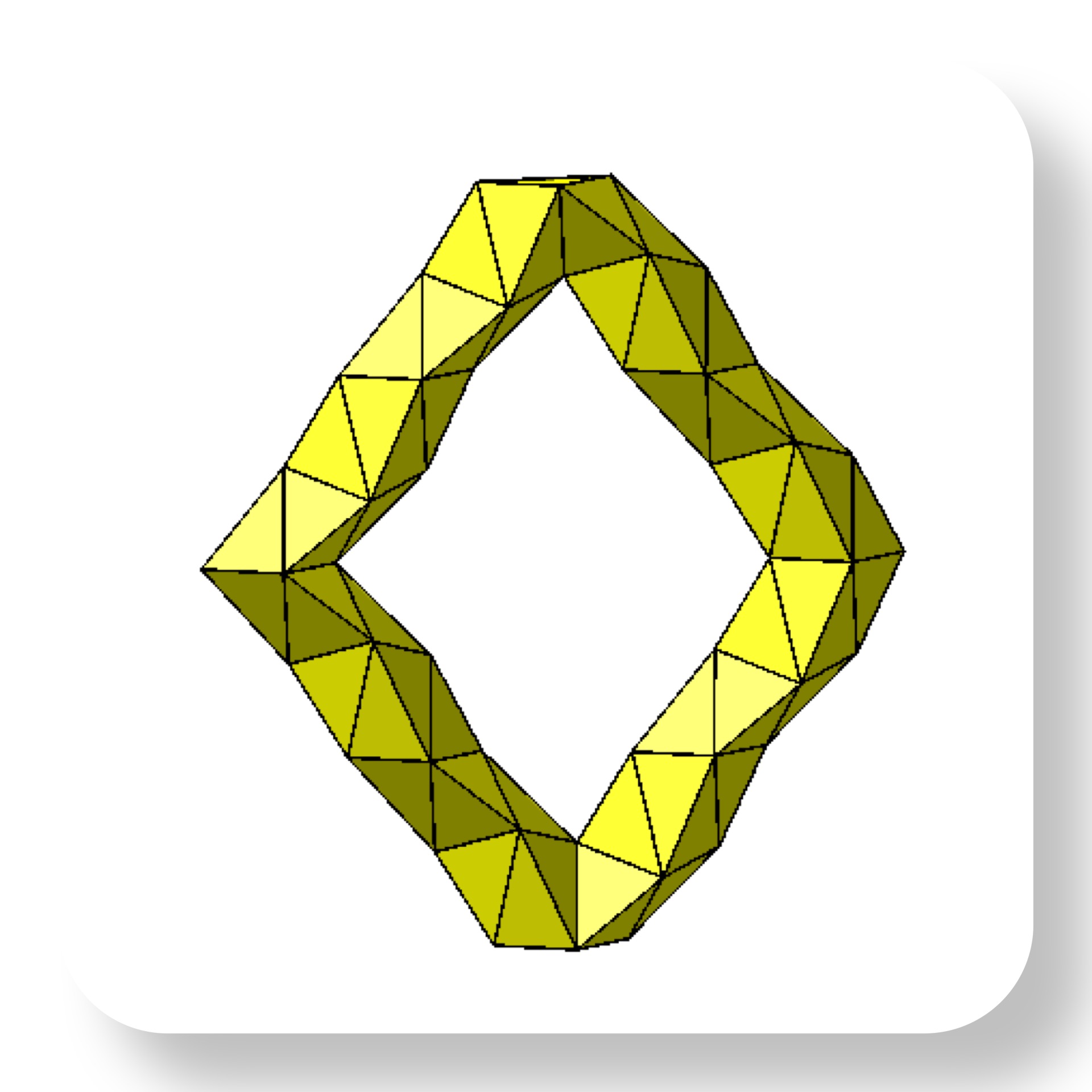}
        \caption{}
  \end{subfigure}
\end{minipage}
\begin{minipage}{.45\textwidth}
  \begin{subfigure}{.45\textwidth}
    \includegraphics[height=4cm]{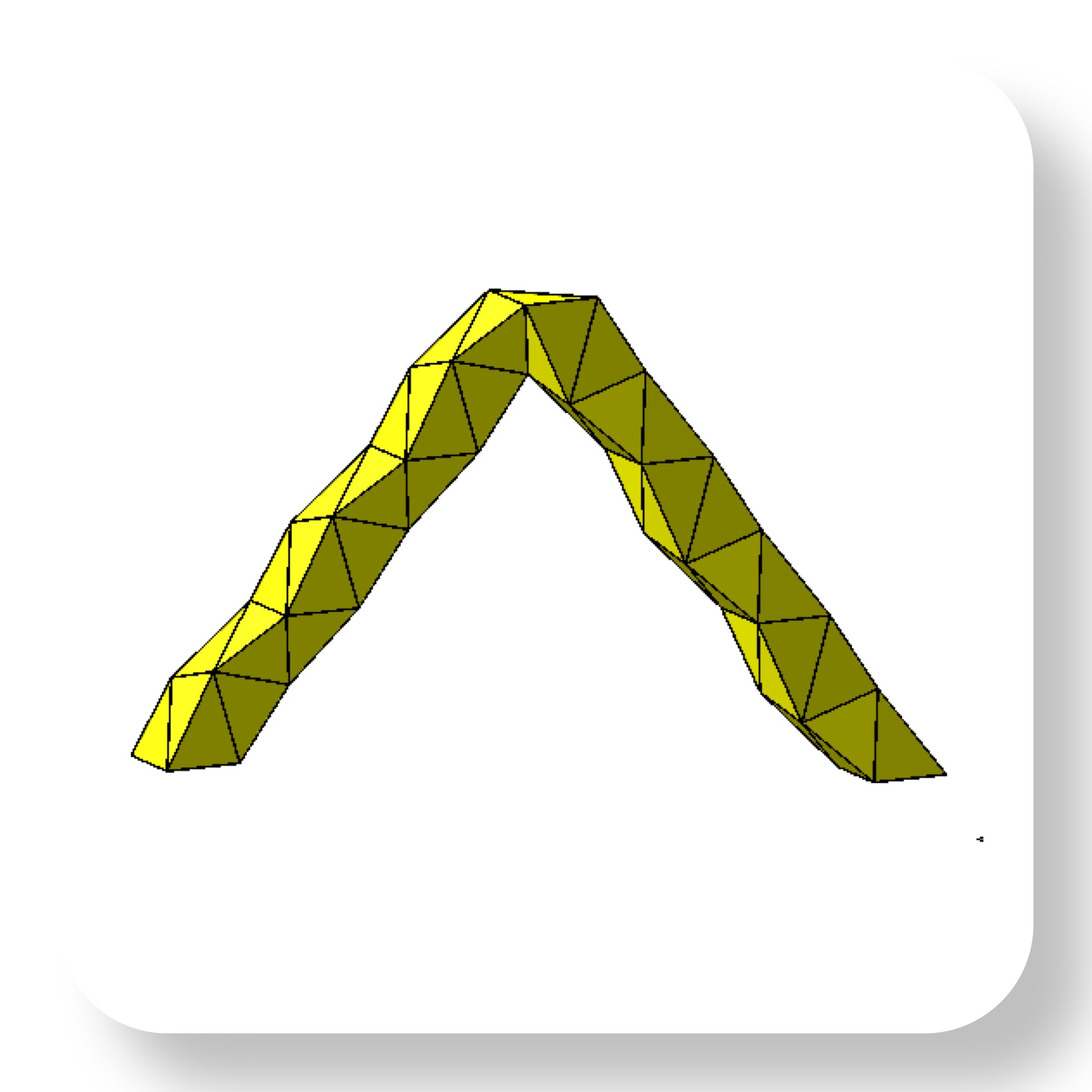}
        \includegraphics[height=4cm]{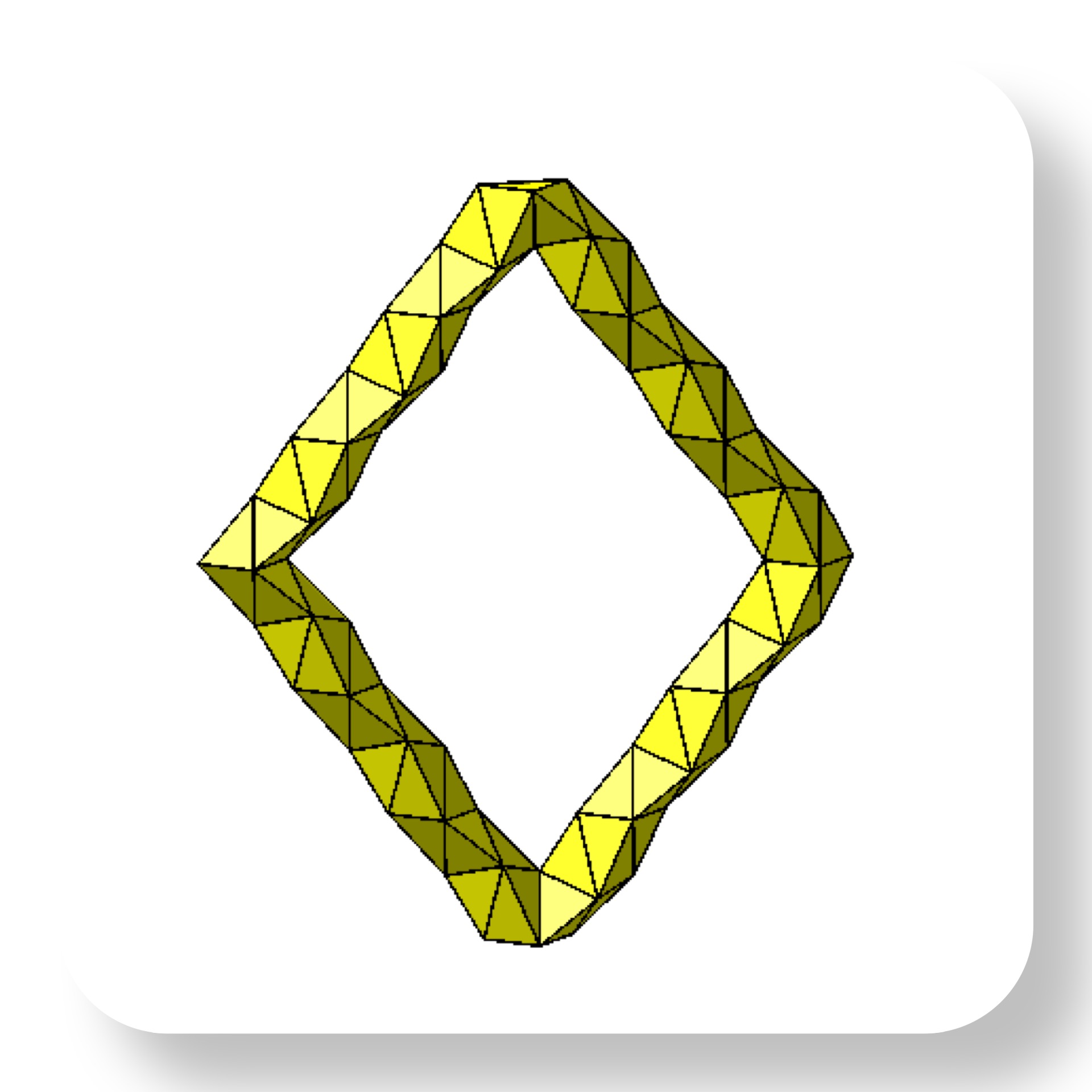}
        \caption{}
  \end{subfigure}
\end{minipage}
  \caption{Examples of polyhedra corresponding to multi-tetrahedral spheres that yield toroidal polyhedra with approximate edge lengths $\ell(r_2,1)$, see Example \ref{example:helix6}.}
  \label{fig:infinitefamily}
\end{figure}
Note that near-perfect chains of regular tetrahedra that result from a similar construction have been investigated in \cite{quadrahelix}. M. Elgersma et al.\ further examined the gap between the planes of two such double tetra-helices to obtain a near-perfect loop.

\section{Multi-tetrahedral surfaces of higher genera}
\label{section:highergenus}
Given a multi-tetrahedral surface of positive genus, larger multi-tetrahedral surfaces of higher genera can be constructed. In this section, we embed certain multi-tetrahedral surfaces with higher genera into $\mathbb{R}^3$ by exploiting strong embeddings of multi-tetrahedral tori. This construction uses the procedure presented in Remark~\ref{remark:construction}. Furthermore, we make use of a space-filling consisting of identical copies of a wild tetrahedron to provide examples of polyhedra of higher genera.
\begin{remark}
Let $X$ be a multi-tetrahedral surface of positive genus and $Y$ be a multi-tetrahedral torus, i.e.\ $\chi(Y)=0$ with corresponding strong $(a,b,c)$-embeddings $\phi_X$ and $\phi_Y,$ respectively.
Furthermore, let $f_X$ and $f_Y$ be faces in $X$ and $Y$ such that there exist no vertices $v\in X_0\setminus f_X$ and $v'\in Y_0 \setminus f_Y$ satisfying that $\phi_X(v)$ and $\phi_Y(v')$ are contained in the planes that are defined by $\phi_X(f_X)$ and $\phi_Y(f_Y),$ respectively.
If all vertex coordinates of vertices in $X$ and $Y$ are contained in exactly one half-space induced by the face $\phi_X(f_X)$ and $\phi_Y(f_Y),$ respectively, then the affine transformations described in Remark~\ref{remark:construction} give rise to a simplicial surface $Z$ with Euler characteristic as follows:
\begin{align*}
&\chi (Z)=\vert Z_0\vert-\vert Z_1\vert+\vert Z_2\vert \\
=&(\vert X_0 \vert+\vert Y_0 \vert-3)-(\vert X_1 \vert+\vert Y_1 \vert-3)+(\vert X_2 \vert+\vert Y_2 \vert-2)\\
&= \chi(X)+\chi(Y)-2=\chi(X)-2.
\end{align*}
Since multi-tetrahedral surfaces are orientable, the genus of $Z$ is given by $g(X)+1.$ 
\end{remark}
By using the above remark, we can use the multi-tetrahedral torus whose corresponding polyhedron is illustrated in Figure~\ref{fig:smallest_torus} to construct polyhedra with increasing genera, see Figure~\ref{fig:genus}.
\begin{figure}[H]
  \centering
  \begin{subfigure}{.3\textwidth}
  \includegraphics[height=4cm]{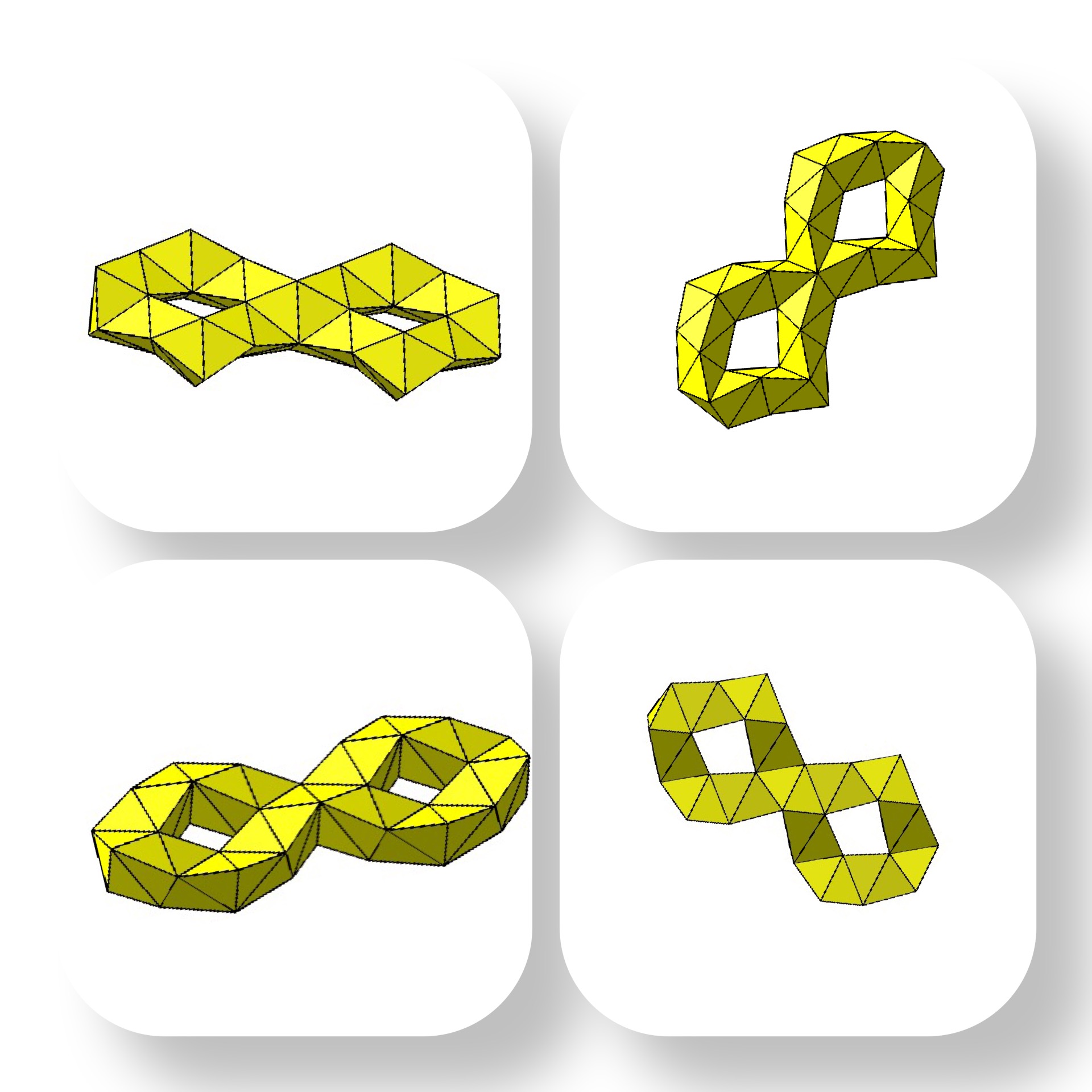}
  \caption{}  
  \end{subfigure}
  \begin{minipage}{.02\textwidth}
    \phantom{a}
  \end{minipage}
  \begin{subfigure}{.3\textwidth}
  \includegraphics[height=4cm]{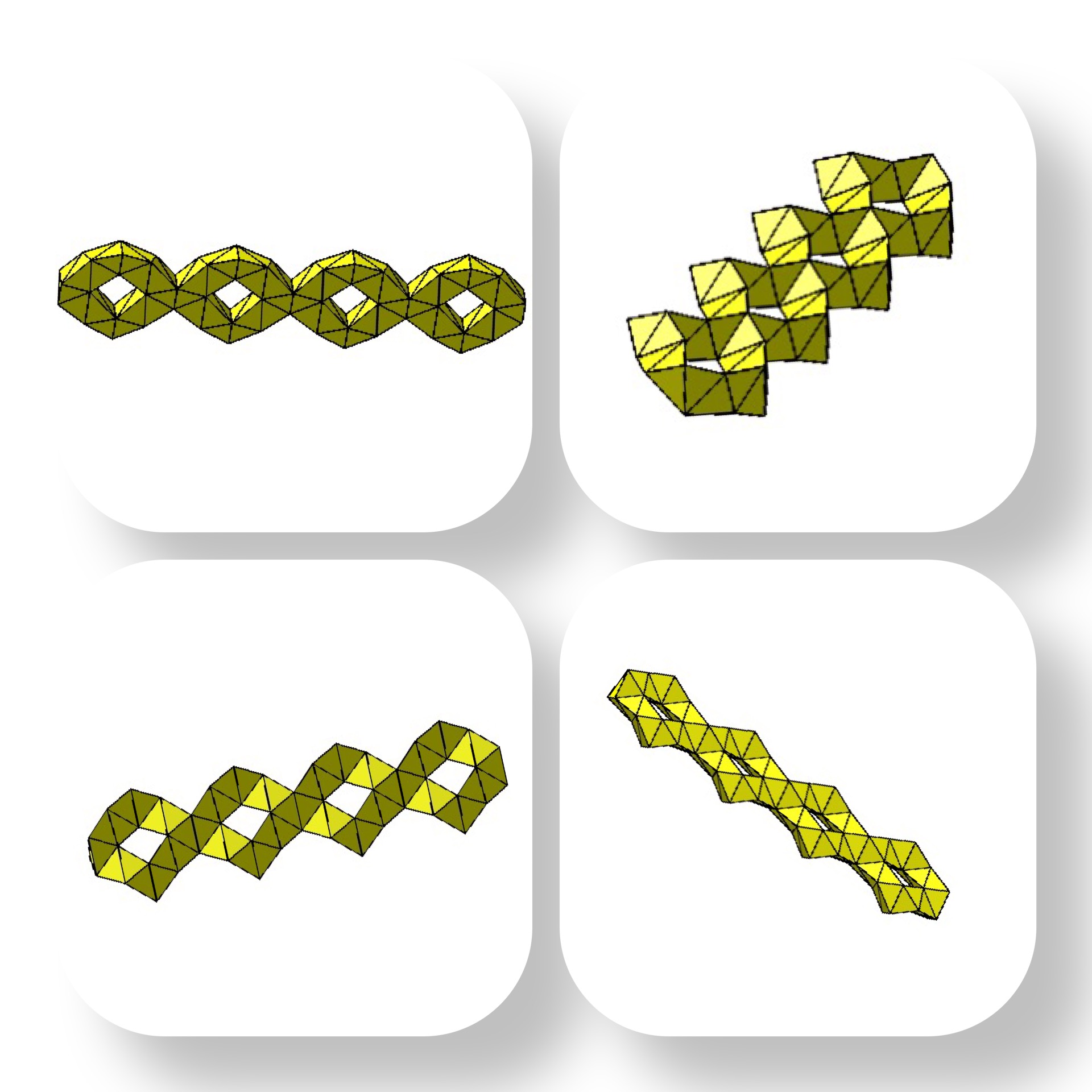}
  \caption{}  
  \end{subfigure}
    \begin{minipage}{.02\textwidth}
      \phantom{a}
  \end{minipage}
  \begin{subfigure}{.3\textwidth}
  \includegraphics[height=4cm]{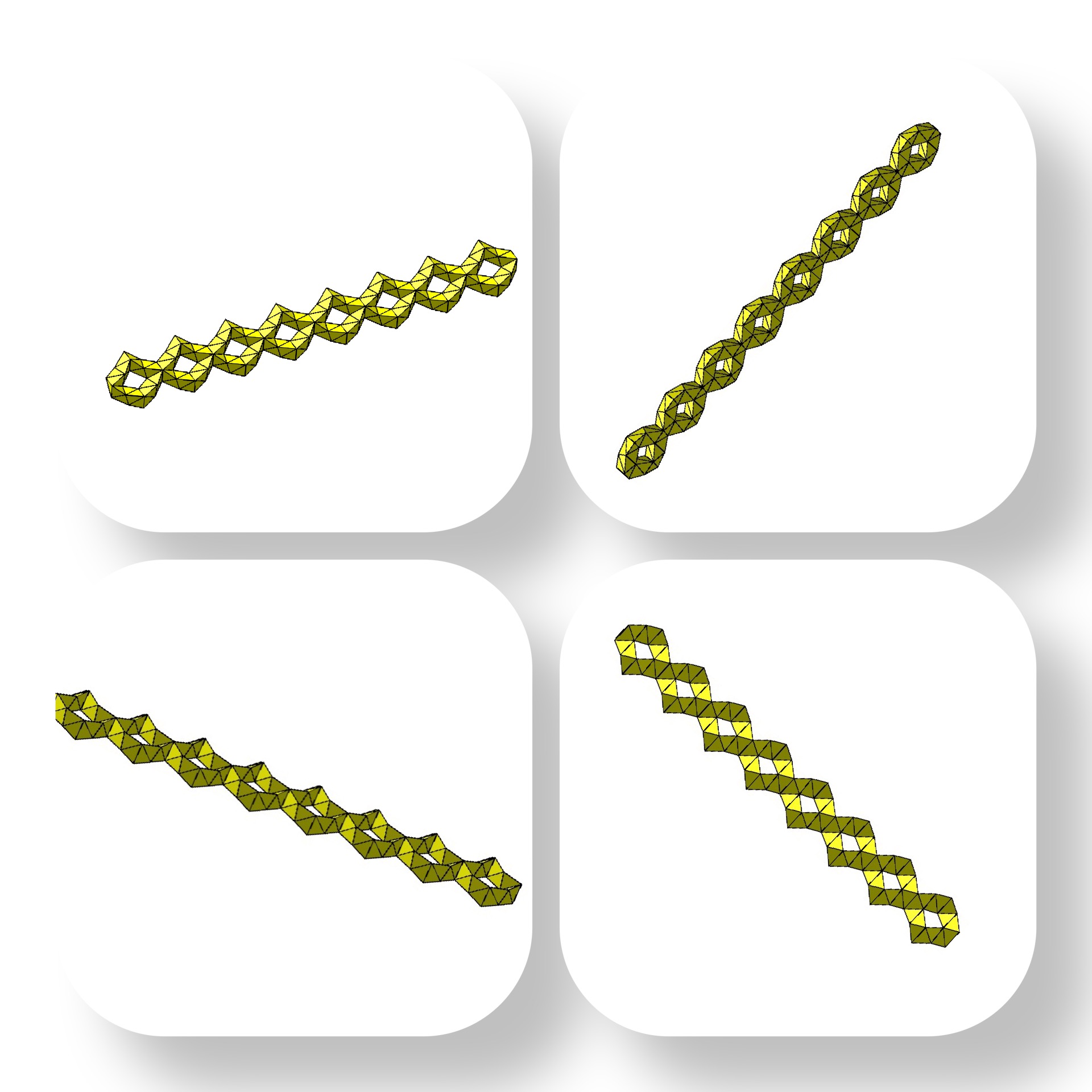}
  \caption{}  
  \end{subfigure}
  \caption{Various views of polyhedra consisting of faces with edge lengths $\ell(r_2,1)$ that result from embedding multi-tetrahedral surfaces of genera $2$ (a), $4$ (b) and $8$ (c) into $\mathbb{R}^3$}
  \label{fig:genus}
\end{figure}


In the following remark, we present the idea of exploiting space-fillings to construct various examples of polyhedra of higher genera. In particular, we make use of a space-filling of the Euclidean $3$-space that consists of congruent tetrahedra with isosceles triangular faces that have been introduced by Sommerville in 1922.
\begin{remark}\label{remark:sommerville}
In \cite{sommerville}, Sommerville presents an \emph{isosceles tetrahedron}, i.e.\ a tetrahedron consisting of isosceles faces with edge lengths $(2,\sqrt{3},\sqrt{3}),$ and further proves that it is possible to arrange copies of this tetrahedron to obtain a space-filling of the Euclidean $3$-space. Here, we associate an undirected edge-coloured graph to this space-filling to obtain various examples of polyhedra of higher genera consisting of congruent isosceles faces. We define the graph $\mathcal{G}=(V,E)$ as follows: Let $\{T_i\subseteq \mathbb{R}^3\mid i\in\mathbb{N}\}$ be the set of isosceles tetrahedra consisting of faces with edge lengths $(2,\sqrt{3},\sqrt{3})$ that give rise to the space-filling in \cite{sommerville}.
     We define the set of vertices $V$ as $V:=\mathbb{N}.$ Furthermore, the set $\{i,j\}$ is an edge in $\mathcal{G}$, i.e.\ $\{i,j\}\in E$, if and only if $i\neq j$ holds and the intersection $T_i\cap T_j$ is non-empty. Moreover, let $\zeta:E\to \{1,2,3\}$  be an arc colouring of $\mathcal{G}$ such that the equality $\zeta(\{i,j\})=1,2,3$ holds if and only if the intersection of $T_i$ and $T_j$ is a vertex, an edge or a face of both tetrahedra, respectively. By reading the subscripts modulo $n$, we observe that every chordless cycle $C=(i_1,\ldots,i_n)$ in $\mathcal{G}$ with $\zeta (\{i_k,i_{k+1}\})=3$ for all $1\leq k\leq n$ corresponds to a perfect chain of wild tetrahedra $\tau=(T_{i_1},\ldots,T_{i_n})$. This yields a toroidal polyhedron consisting of triangular faces with edge lengths $(2,\sqrt{3},\sqrt{3}).$
Furthermore, if $C$ is a cycle (not necessarily chordless) with $\zeta (\{i,j\})=3$ for all $\{i,j\}\in E\cap \{i_1,\ldots,i_n\},$  then $C$ corresponds to a perfect chain of wild tetrahedra. Thus, the space-filling by Sommerville yields various examples of toroidal polyhedra and polyhedra of higher genera. 
\end{remark} 

\section{Notes on implementation}
\label{section:implementation}
In this section, we discuss the implementations (available in \cite{dataTori}) that allowed us to construct a census of toroidal polyhedra. The computations to obtain the desired toroidal polyhedra are all carried out using the computer algebra systems \textsc{GAP} and \textsc{Maple}. 
The \textsc{GAP}-package \emph{SimplicialSurfaces} gives us functionalities to examine the combinatorial structure of the simplicial surfaces in this paper. We further make use of the \textsc{Maple}-package \emph{SimplicialSurfaceEmbeddings} to study the geometric properties of the polyhedra that arise from embedding multi-tetrahedral surfaces into $\R^3$. Moreover, this package contains functionalities to plot and visualise the polyhedra that arise in our studies.
We computed the mentioned census as described in the following:
First, we use \textsc{GAP} to extract proper multi-tetrahedral spheres from established databases of multi-tetrahedral spheres (see \cite{brinkmann}). These spheres form the foundation of our search for toroidal polyhedra. We then construct $(1,b,c)$-embeddings of the obtained proper multi-tetrahedral spheres in \textsc{Maple} as described in Remark~\ref{remark:embVertex}. For a given proper multi-tetrahedral sphere, we check whether there exist $b,c \in \R_{>0}$ such that a tuple derived from the given sphere satisfies property $\textit{(T1)}$ or property $\textit{(T2)}$ (Definition~\ref{def:propertyt1} and Definition~\ref{def:propertyt2}). 
These computations were all carried out algebraically. 
Our search for toroidal polyhedra is divided into two cases, namely finding toroidal polyhedra with and without reflection symmetries. Here, we employ Algorithms \ref{alg:torus1} and \ref{alg:torus2} to perform the required computations.
 In the following table, we give more details on the toroidal polyhedra that have been computed in this study. This table contains the total numbers of toroidal polyhedra resulting from multi-tetrahedral tori with up to $36$ faces with respect to self-intersections and reflection symmetries. These numbers are provided up to isometries, isomorphism of the simplicial surfaces and up to permutation of edge lengths of the toroidal polyhedra. 
 \begin{table}[H]
  \centering
  \begin{tabular}{ |c|c|c|c| } 
    \hline
    & \textbf{with self-inters.} & \textbf{without self-inters}. \\ 
    \hline
    \textbf{with reflection sym.} & 149 & 27\\ 
    \hline
    \textbf{without reflection sym.} & 0 &0 \\ 
    \hline
  \end{tabular}
  \caption{Number of toroidal polyhedra with reflection symmetry consisting of up to $36$ faces and without reflection symmetry up to $18$ faces.}
  \label{tab:totaltori}
\end{table}
More precisely, in the following table, we provide the numbers of constructed toroidal polyhedra for a given number of faces with respect to reflection symmetries. For this purpose, let (a) $A_n$ and (b) $B_n$ be the set of toroidal polyhedra consisting of $n$ faces (a) with self-intersections and (b) without self-intersections.
\begin{table}[H]
  \centering
  \begin{tabular}{ |c|c|c|c|c|c| } 
    \hline
    $\mathbf{n}$ & \textbf{28} & \textbf{32} & \textbf{36}  \\ 
    \hline
    $\mathbf{|A_n|}$ & 11 & 11 &127\\ 
    \hline
    $\mathbf{|B_n|}$ & 2 & 2 &23 \\ 
    \hline
  \end{tabular}
  \caption{Number of toroidal polyhedra with reflection symmetry. }
  \label{tab:totalmirror}
\end{table}
Note that a given multi-tetrahedral sphere can give rise to different non-isometric toroidal polyhedra.
Further, we note that our computations verify that there does not exist a toroidal polyhedron without reflection symmetries and self-intersections with less than or equal to $18$ faces. Nevertheless, such toroidal polyhedra can be obtained by using the construction from Remark~\ref{remark:sommerville}.

\subsection*{Acknowledgement}
R. Akpanya, A. Niemeyer and D. Robertz acknowledge the funding by the Deutsche Forschungsgemeinschaft (DFG, German Research Foundation) in the framework of the Collaborative Research Centre CRC/TRR 280 “Design Strategies for Material-Minimized Carbon Reinforced Concrete Structures – Principles of a New Approach to Construction” (project ID 417002380). The authors thank Meike Weiß and Giulia Iezzi for their useful
comments on earlier versions of the manuscript.

\newpage
\bibliographystyle{abbrv}

\end{document}